%

\documentclass[12pt]{article}

\usepackage{amsthm,amsmath,amssymb,latexsym,color}


 \newcommand\E{{\mathbb E}}
\def\R{{\mathbb R}}
\def\P{{\mathbb P}}
\def\N{{\mathbb N}}
\def\Z{{\mathbb Z}}
\def\Net{{\mathcal N}}

\def\dist{{\rm d}}

\def\Exp{{\mathbb E}}
\def\M{{\mathcal M}}

\def\Im{{\rm Im}}
\def\diam{{\rm diam}}
\def\Vol{{\rm Vol}}

\def\Proj{{\rm P}}
\def\spn{{\rm span}}

\def\conv{{\rm conv}}
\def\B{{\rm BM}}
\def\S{\mathbb{S}}
\def\Id{{\bf I}}
\def\cov{{\rm cov}}
\def\rank{{\rm rank}}
\def\det{{\rm det}}
\newcommand\Event{{\mathcal E}}
\newcommand\Sphere{S}
\newcommand\BStat{{\mathcal B}}
\newcommand\BadBlocks{\mathcal I}

\title{When does a discrete-time random walk in $\R^n$ absorb the origin into its convex hull?}
\author{Konstantin Tikhomirov \ and \ Pierre Youssef\footnote{University of Alberta, Department of Mathematical and Statistical sciences. \hspace{3cm} \texttt{e-mail: \small ktikhomi@ualberta.ca ; pyoussef@ualberta.ca}}}

\begin{document}


\maketitle


\begin{abstract}
We connect this question to a problem of estimating
the probability that the image of certain random matrices
does not intersect with a subset of the unit sphere $\S^{n-1}$. In this way, the case
of a discretized Brownian motion is related to Gordon's
escape theorem dealing with standard Gaussian matrices. 
We show that for the random walk $\B_n(i), i\in \N$, the 
convex hull of the the first $C^n$ steps (for a sufficiently large
universal constant $C$) contains the origin with probability 
close to one. 
Moreover, the approach allows us to prove that
with high probability  the $\pi/2$-covering time of certain
random walks on $\S^{n-1}$ is of order $n$.
For certain spherical simplices on $\S^{n-1}$, we prove an extension of Gordon's
theorem dealing with a broad class of random matrices; as an application,
we show that $C^n$ steps are sufficient for the standard walk on $\Z^n$ to absorb
the origin into its convex hull with a high probability. 
Finally, we prove that the aforementioned bound is sharp in the following sense: for some universal constant $c>1$, the 
convex hull of the $n$-dimensional Brownian motion 
$\conv\{\B_n(t):\, t\in[1,c^{n}]\}$ does not contain the origin with probability close to one. 
\end{abstract}



\theoremstyle{plain}
\newtheorem{theor}{Theorem}
\newtheorem*{theoA}{Theorem~A}
\newtheorem*{theoB}{Theorem~B}
\newtheorem*{theoC}{Theorem~C}
\newtheorem*{theoD}{Theorem~D}
\theoremstyle{remark}
\newtheorem{rem}{Remark}
\theoremstyle{plain}
\newtheorem{prop}[theor]{Proposition}
\newtheorem{cor}[theor]{Corollary}
\newtheorem{lemma}[theor]{Lemma}
\newtheorem{defi}[theor]{Definition}

\section{Introduction}

The goal of this paper is to study certain convexity aspects of high-dimensional random walks. Given a discrete-time random walk $W(i)$ with values in $\R^n$, we are interested in estimating the number of steps
$N$ when the origin becomes an interior point of the convex hull of $\{W(i)\}_{i\le N}$.
This question was raised by I.~Benjamini and considered by R.~Eldan in \cite{MR3161524}.
Three models of random walks are treated in our paper:
a walk given by a discretization of the standard Brownian motion in $\R^n$,
the standard random walk on $\Z^n$ and a random walk on the unit sphere
$\S^{n-1}$. We employ a novel approach that
reduces the problem to a question about certain geometric properties of random matrices.
Random matrix theory has strong connections with asymptotic geometric analysis (see, for example, \cite{lama-book} and \cite{vershynin}); in particular, random matrices appear in
Gordon's escape theorem \cite{MR950977} and in various estimates of diameters of random
sections of convex sets \cite{MR0827766}, \cite{MR845980}. The interconnection between random walks,
random matrix theory and high-dimensional convex geometry is at the heart of our paper.

The {\it standard Brownian motion} with values
in $\R$ is defined as a centered Gaussian process $\B_1(t)$, $t\in [0,\infty)$, such that 
the covariance $\cov\left(\B_1(t)\right.$, $\left.\B_1(s)\right)=\min(t,s)$ for all $t,s\in[0,\infty)$. 
The Brownian motion in $\R^n$, denoted by $\B_n$, is a vector 
of $n$ independent one-dimensional Brownian motions.
We refer the reader to \cite{MR2604525} for extensive 
information on the process $\B_n$. 
Various properties of the convex hull of the Brownian motion in high dimensions were studied 
recently in \cite{MR3161524}, \cite{MR3210546} and \cite{volume-brownian};
in particular, results on interior and extremal points
of the convex hulls were obtained.
It is easy to see that the interior of $\conv\{\B_n(t):\,0<t<1\}$ (with ``$\conv$''
denoting the convex hull) contains the origin almost surely.
In the case when the domain $t\in (0,1)$ is replaced by a finite subset of the unit interval,
the origin is outside of the convex hull with a non-zero probability.
Our paper is motivated by the following problem which
in a more specific form was considered by Eldan in \cite{MR3161524}: \\

Let $t_1<t_2<\dots< t_N$ be points in $[0,1]$. How
is the probability that the origin 
belongs to the interior of $\conv\{\B_n(t_i):\, i\le N\}$ related to
the structure of the set $\{t_i\}_{i\le N}$?\\

In \cite{MR3161524}, the numbers $N$ and $t_1,t_2,\dots , t_N$ were generated by a homogeneous Poisson
point process in $[0,1]$. It was 
shown that when the expected number of generated points $N$ is greater than $e^{Cn\log(n)}$, the origin 
belongs to the interior of $\conv\{\B_n(t_i):i\le N\}$ with high probability
\cite[Theorem~3.1]{MR3161524}. A related result of \cite{MR3161524} dealing with the standard walk on 
$\Z^n$ states that, with probability close to one, $e^{Cn\log(n)}$ steps are sufficient
for the convex hull of the walk to absorb the origin.
It was not clear, however, whether the bound $e^{Cn\log(n)}$ was sharp. 
This question is addressed in the first main theorem of our paper:
\begin{theoA}\label{theorem-introduction}
There exists a constant $C>0$ such that
for any $n\in \N$ and $N\ge \exp(Cn)$ the following holds.
\begin{itemize}
\item Setting $t_i:=i/N$, $i=1,2,\dots ,N$, the set $\conv\{BM_n(t_i), i\le N\}$ contains the origin 
in its interior with probability at least $1-\exp(-n)$.
\item The convex hull of the first $N$ steps of the standard random walk on $\Z^n$ starting at $0$, contains 
the origin in its interior with probability at least $1-\exp(-n)$.
\end{itemize}
\end{theoA}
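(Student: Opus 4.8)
The plan is to reformulate the containment assertion, via convex duality, as a statement that the \emph{range of a random matrix escapes a subset of the sphere}, and then invoke Gordon's escape theorem (and, for the lattice walk, a robust variant of it valid for non-Gaussian matrices); the two items are handled identically, the only difference being that for the Brownian motion the matrix is exactly Gaussian while for the walk on $\Z^n$ it is only approximately so. For any finite set $\{v_1,\dots,v_N\}\subset\R^n$ one has $0\in\operatorname{int}\conv\{v_i\}$ iff for every $\theta\in\S^{n-1}$ there is an $i$ with $\langle v_i,\theta\rangle>0$, i.e. there is no $\theta\neq 0$ with $M\theta\le 0$ coordinatewise, $M$ being the $N\times n$ matrix with rows $v_i^{T}$; and since $\operatorname{int}\conv$ is monotone under inclusion of the point set it is enough to prove this for a well-chosen subsequence of the steps. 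Fix a large absolute constant $C_1$, set $r:=\lceil C_1 n\rceil$, and choose $1\le m_1<\dots<m_r\le N$ essentially geometrically spaced with ratio $\rho$; because $N\ge e^{Cn}$ we may take $\rho$ to be as large an absolute constant as we wish, at the cost of enlarging $C$ (this is the only place $C$ is used, and one ends with $C\asymp C_1\log\rho$). Writing $\Delta_l$ for the increment of the walk over $(m_{l-1},m_l]$, these increments are independent, $\Delta_l$ has covariance a scalar multiple of $(m_l-m_{l-1})\,I_n$, and we set $\Delta_l=D_{ll}\zeta_l$ where $D:=\operatorname{diag}(D_{ll})$ has geometrically growing entries (ratio $\sqrt\rho$) and the $\zeta_l$ are i.i.d.\ isotropic: standard Gaussian vectors in the Brownian case, and — being normalised sums of huge numbers of vectors $\pm e_k$ — approximately standard Gaussian by the central limit theorem in the $\Z^n$ case. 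With $Z$ the $r\times n$ matrix with rows $\zeta_l^{T}$ and $L$ the lower–triangular all–ones matrix, one has $\langle v_{m_j},\theta\rangle=(LDZ\theta)_j$, so $0\notin\operatorname{int}\conv\{v_{m_j}:j\le r\}$ precisely when the random $n$–dimensional subspace $\Im(Z)\subset\R^r$ fails to escape $\mathcal K\cap\S^{r-1}$, where
\[
\mathcal K:=\{y\in\R^r:\ (LDy)_i\le 0\ \text{for all }i\}=D^{-1}L^{-1}(\R^r_{\le 0}).
\]

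The point of the geometric spacing is that if one used all $N$ steps the obstacle would be the cone $\{x:\ \text{all partial sums of }x\text{ are }\le 0\}$, whose statistical dimension is $N-\Theta(\log N)=N-\Theta(n)$ — so close to a half-space that Gordon's theorem is useless — whereas reweighting by the geometrically growing $D$ pulls the cone down toward the negative orthant. Indeed $\mathcal K\supseteq\R^r_{\le 0}$, while its polar is
\[
\mathcal K^{\circ}=D\cdot\{z:\ z_1\ge z_2\ge\dots\ge z_r\ge 0\}=\{w\ge 0:\ w_{l+1}\le\sqrt\rho\,w_l\ \text{for all }l\},
\]
which increases to $\R^r_{\ge 0}$ as $\rho\to\infty$; a direct estimate of statistical dimensions then gives $\delta(\mathcal K)=r-\delta(\mathcal K^{\circ})\le(\tfrac12+\eta)\,r$ with $\eta=\eta(\rho)\to 0$, and hence $w(\mathcal K\cap\S^{r-1})^2\le\delta(\mathcal K)\le(\tfrac12+\eta)\,r$. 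Since $r=C_1 n$ and the codimension of $\Im(Z)$ in $\R^r$ is $r-n=(C_1-1)n$, choosing $C_1$ and $\rho$ large leaves a gap of order $n$ between the squared Gaussian complexity of the obstacle and the codimension; equivalently $a_{r-n}-w(\mathcal K\cap\S^{r-1})\gtrsim\sqrt n$, where $a_{r-n}=\E\|g\|$ for a standard Gaussian vector $g$ in $\R^{r-n}$.

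For the Brownian motion $Z$ is a standard Gaussian matrix, $\Im(Z)$ is a uniformly random $n$–subspace, and Gordon's escape theorem applies directly: it misses $\mathcal K\cap\S^{r-1}$ with probability at least $1-3.5\exp\!\big(-c(a_{r-n}-w(\mathcal K\cap\S^{r-1}))^2\big)\ge 1-\exp(-n)$ once $C_1$ is large enough. For the walk on $\Z^n$, $Z$ has independent isotropic but only approximately Gaussian rows, so the Gaussian comparison inequalities behind Gordon's theorem are not available; here one uses the extension announced in the abstract — an escape theorem for random matrices with independent isotropic rows under mild small–ball/moment hypotheses — applied to $\mathcal K\cap\S^{r-1}$, which is a small perturbation of a regular spherical simplex and therefore has the low metric complexity such arguments need. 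The main obstacle is precisely this last step in the lattice case: the escape phenomenon for the non-Gaussian $Z$ \emph{cannot} be obtained by a crude $\varepsilon$–net on the obstacle, since $\mathcal K\cap\S^{r-1}$ has dimension $r-1=\Theta(n)$ so a net costs $\varepsilon^{-\Theta(n)}$, overwhelming the $\varepsilon^{\,r-n}$ gained from a random $n$–subspace passing $\varepsilon$–close to a fixed point; one must instead run a small–ball (Mendelson–type) argument, or — exploiting that the obstacle is a spherical simplex — a quantitative multivariate central limit theorem comparing the block increments $\Delta_l$ to Gaussians. Making those error terms uniform over the simplex, and verifying that the statistical–dimension bound above is stable under the perturbation of $\mathcal K$ caused by the rounding of the $m_l$ and by non-Gaussianity, is the technical crux; the rest is the bookkeeping of Steps above.
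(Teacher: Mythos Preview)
Your treatment of the Brownian case is essentially the paper's: select a geometrically spaced subsequence, write the walk values as $FG$ for a lower--triangular $F$ and a standard Gaussian $G$, and show the relevant cone has Gaussian width (equivalently, statistical dimension) bounded away from $r$ by a constant multiple of $r$ once the ratio $\rho$ is large, so Gordon's theorem applies with an $e^{-cn}$ bound. The paper bounds the width via the volume of the polar cone and Urysohn's inequality rather than a direct statistical--dimension computation, but the two are equivalent here.

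For the $\Z^n$ walk there is a genuine gap. You correctly note that Gordon does not apply and that an $\varepsilon$--net \emph{on the obstacle} $\mathcal K\cap\S^{r-1}$ is hopeless for cardinality reasons. But you then reach for Mendelson--type small--ball machinery or a quantitative multivariate CLT, leaving the ``technical crux'' unresolved. The paper's argument is far simpler and avoids both: one nets the \emph{domain} sphere $\S^{n-1}$, not the obstacle. Concretely, take $\rho$ so large that $\|F-\Id\|\le\eta$ (your observation that $\mathcal K^\circ\nearrow\R^r_{\ge 0}$ as $\rho\to\infty$ is exactly this). Then $FZy\in\R_+^r$ forces $\|(Zy)_-\|\le\eta\|Z\|$. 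Now for each fixed $y\in\S^{n-1}$ the coordinates $\langle\zeta_l,y\rangle$ are independent and, by isotropy plus a $2+\varepsilon$ moment bound, each satisfies $\P\{\langle\zeta_l,y\rangle<-\tau\}\ge\delta$ for universal $\tau,\delta>0$; Hoeffding then gives $\|(Zy)_-\|\ge\tau\sqrt{\delta r/2}$ with probability $1-e^{-\delta^2 r/2}$. An $\eta$--net on $\S^{n-1}$ of size $(3/\eta)^n$ and the bound $\|Z\|\le K\sqrt r$ finish the job once $r/n$ is large enough that $(3/\eta)^n e^{-\delta^2 r/2}\le e^{-\delta^2 r/4}$. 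This is the content of the paper's Theorem~D; no CLT and no chaining on the simplex are needed, because the structure of the orthant lets you decouple coordinatewise.
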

The first part of this theorem also holds when $\{t_i\}$ is a homogeneous Poisson process in $[0,1]$ of intensity
at least $\exp(Cn)$. Therefore, our result is strictly stronger than the bound proved in \cite{MR3161524}. 

Let us discuss optimality of the estimates in Theorem~A. 
Regarding  the second assertion, it was proved in 
\cite{MR3161524}
that if the number of steps $N$
is less than $\exp(cn/\log n)$ then with probability close to one
the origin does not belong to the interior 
of  the convex hull of the standard walk on $\Z^n$. 

For the first assertion of Theorem~A, we prove that it is optimal in the 
sense that the number of points $N$ must be
exponential in $n$ in order 
to have, say, $\P\{0\in\conv\{\B_n(t_i), i\le N\}\}\ge 1/2$. More precisely, 
we prove the following: 

\begin{theoB}\label{th-minimax}
There exist universal constants $c>0$ and $n_0\in\N$ with the following property: let $n\ge n_0$ and
$\B_n(t)$ ($0\le t<\infty$)
be the standard Brownian motion in $\R^n$. Then
$$\P\bigl\{0\in\conv\{\B_n(t):\,t\in [1,2^{cn}]\}\bigr\}\le \frac{1}{n}.$$
\end{theoB}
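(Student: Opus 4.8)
The plan is to reduce the event $\{0 \in \conv\{\B_n(t):\,t\in[1,2^{cn}]\}\}$ to a statement about finitely many sample points and then to find, with high probability, a hyperplane through the origin with all sampled points on one side. First I would observe that by Carath\'eodory's theorem it suffices to control the discretized walk: if $0$ lies in the convex hull of the continuous trajectory on $[1,T]$, then (after a mild continuity/compactness argument) it lies in the convex hull of $\B_n$ evaluated on a sufficiently fine deterministic grid $1=s_0<s_1<\dots<s_M=T$; taking $M$ polynomial in $T$ (hence $M = 2^{O(cn)}$) and using that on a bounded time window the Brownian paths do not oscillate too wildly, one reduces to showing that $0 \notin \conv\{\B_n(s_j):\,j\le M\}$ with probability $\ge 1 - 1/n$. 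The latter is equivalent, by the separating hyperplane theorem, to the existence of a unit vector $\theta \in \S^{n-1}$ with $\langle \theta, \B_n(s_j)\rangle > 0$ for all $j$, i.e.\ to the statement that a certain random subset of $\S^{n-1}$ (the directions in which \emph{all} sampled increments point positively) is nonempty.

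**Building the separating direction.** The key idea is that it is enough to exhibit \emph{one} good direction, so I would look for $\theta$ among the points $\B_n(s_j)/|\B_n(s_j)|$ themselves, or more robustly, take $\theta := \B_n(1)/|\B_n(1)|$ and argue that the process stays in the half-space $\{\langle \theta, x\rangle > 0\}$ for a long time with decent probability. Conditionally on $\B_n(1) = v$, the process $\langle \theta, \B_n(t)\rangle$ is a one-dimensional Brownian motion started at $|v| = |\B_n(1)| \approx \sqrt{n}$ (with high probability, by concentration of the Gaussian norm). The probability that a standard one-dimensional Brownian motion started at height $a$ stays positive up to time $T$ is, by the reflection principle, $\P\{\min_{[0,T]} \ge 0\} = \P\{|N(0,T)| \le a\} \asymp a/\sqrt{T}$ for $a \ll \sqrt T$. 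Plugging $a \asymp \sqrt n$ and $T = 2^{cn}$ gives a probability of order $\sqrt{n}\,2^{-cn/2}$ — which is \emph{too small}, so a single fixed direction does not suffice and I must take a union over many candidate directions.

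**The union bound over a net, and why exponentially many points are needed.** This is where the exponential threshold $2^{cn}$ enters. I would cover $\S^{n-1}$ by an $\varepsilon$-net $\mathcal N$ of cardinality $(C/\varepsilon)^n$, and for each net point $\theta$ estimate $\P\{\exists\,t\in[1,T]:\,\langle\theta,\B_n(t)\rangle = 0 \text{ is first crossing before } T \text{ fails}\}$ — more precisely, the probability that $\langle\theta,\B_n(\cdot)\rangle$ \emph{never} returns to a slab of width $\delta$ around $0$. Since $0 \in \conv\{\B_n(s_j)\}$ forces \emph{every} direction $\theta$ to have some sample point with $\langle\theta,\B_n(s_j)\rangle \le 0$, in particular every net point does; conversely if some $\theta$ (taken slightly perturbed off the net to handle discretization) has all $\langle\theta,\B_n(s_j)\rangle$ bounded away from $0$ on the positive side, the origin is excluded. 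So the bad event is contained in the intersection over all $\theta\in\mathcal N$ of events each of probability $\le 1 - c_0\sqrt n/\sqrt T$; but these events are far from independent, so I cannot simply multiply. The correct route is the \emph{other} direction: bound $\P\{0\in\conv\{\B_n(s_j)\}\} \le \sum_{\theta\in\mathcal N}\P\{\text{no good separation near }\theta\}$? — that also fails. Instead I would use the dual description via Gordon-type / Wendel-type reasoning: $0\in\conv\{x_1,\dots,x_M\}$ iff the origin is not linearly separable, and for this I would directly estimate $\P\{0 \in \conv\{\B_n(s_j)\}\}$ by a first-moment/geometric argument showing it is at most $M$ times the probability that any fixed one of the $s_j$ lies "inside" relative to the others along its own direction, which is again governed by the one-dimensional survival probability $\asymp \sqrt n / \sqrt{s_j} \ge \sqrt n\, 2^{-cn/2}$; summing over $M = 2^{O(cn)}$ points this is $\le 2^{O(cn)}\cdot 2^{-cn/2}$, which is $\le 1/n$ once the implied constant in the exponent is made smaller than the $1/2$ coming from survival — this forces $c$ to be a specific small universal constant and is exactly why the bound is exponential and not larger.

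**Main obstacle.** The hard part is making the union bound honest: the naive covering argument loses because the events "direction $\theta$ fails to separate" are strongly positively correlated and each has probability close to $1$, not close to $0$. The real work is to set up the \emph{complementary} counting — estimating the probability that $0$ IS in the convex hull by a union over the $M$ sample points of a \emph{small} probability event (the event that $\B_n(s_j)$ fails to be an exposed point witnessing its own separating direction), where smallness comes from the one-dimensional reflection-principle bound $\asymp\sqrt n/\sqrt{s_j}$. Balancing $M \le \mathrm{poly}(T)=2^{O(cn)}$ against this $2^{-cn/2}$ survival gain, while simultaneously controlling the $\sqrt{n}$ factor, the norm concentration $|\B_n(1)|\asymp\sqrt n$, and the discretization error from passing between the continuous path and the grid, is the technical core; choosing $c$ small enough that $O(c) < c/2$ closes the estimate and yields the $1/n$ bound for $n \ge n_0$.
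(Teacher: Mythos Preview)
Your proposal has a genuine gap at its core; let me point to the two places where it breaks.

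\textbf{The final union bound is not a valid decomposition.} You write that the event $\{0\in\conv\{\B_n(s_j)\}\}$ can be bounded by ``a union over the $M$ sample points of a small probability event'' with per-point probability $\asymp\sqrt{n}/\sqrt{s_j}$. But there is no such decomposition: ``$0$ is in the convex hull'' means that \emph{every} direction $\theta$ is blocked by \emph{some} sample point, and this is not naturally a union indexed by the points themselves. Moreover, the quantity $\sqrt{n}/\sqrt{s_j}$ you invoke is the probability that a one-dimensional Brownian motion started at height $\sqrt n$ \emph{stays} positive up to time $s_j$ --- a success probability, not a failure probability --- so the terms you are summing do not correspond to subevents of the bad event. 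Even ignoring this, the arithmetic fails: with $T=2^{cn}$ you need $M\gtrsim T$ grid points for the discretization to control the continuous path, so $M\cdot 2^{-cn/2}\asymp 2^{cn/2}\to\infty$. Your closing line ``choosing $c$ small enough that $O(c)<c/2$'' cannot work, since both sides scale linearly in $c$; the implied constant in $O(c)$ is at least $1$ and does not shrink with $c$.

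\textbf{What the paper does instead, and the idea you are missing.} You correctly diagnose that a \emph{fixed} direction $\theta$ (e.g.\ $\B_n(1)/\|\B_n(1)\|$) survives only with exponentially small probability, and that a naive net argument over $\S^{n-1}$ fails due to correlations. The paper's resolution is neither a union bound nor a net: it \emph{constructs} a random direction $\bar v$ that depends on the path and is tailored to it. The key input (Lemma~\ref{normal-vector-lem} in the paper) is that given $m\le d/2$ independent standard Gaussian vectors $X_1,\dots,X_m$ in $\R^d$, there exists a unit vector $\bar u$ with $\langle\bar u,X_i\rangle\ge c\sqrt{d/m}$ for all $i$, and this holds with probability at least $1-e^{-cd}$ --- a probability close to one, not close to zero. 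Applied with $d=n$ and $m=N=cn$ to the increments $\B_n(a_{i+1})-\B_n(a_i)$ at geometric times $a_i=2^{i-1}$, this gives a direction $\bar v_0$ with $\langle\bar v_0,\B_n(a_i)\rangle\gtrsim\sqrt{a_i}$ for all $i\le N$ with high probability. The rest of the proof is an iterative refinement: one perturbs $\bar v_0$ on disjoint coordinate subspaces of $\R^n$ to force positivity on successively finer dyadic grids, and finally uses Brownian bridge tail bounds to interpolate between grid points. The entire scheme exploits that $N\ll n$, i.e.\ that there are far fewer ``independent'' time scales than there are dimensions --- this is precisely where the constant $c$ being small enters, and it is a structural requirement, not an artifact of balancing a union bound.
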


\begin{rem}
The bound $\frac{1}{n}$ in the above theorem can be replaced with $\frac{1}{n^L}$
for any constant $L>0$ at expense of decreasing $c$ and increasing $n_0$.
\end{rem}
 
The statement of Theorem~B is equivalent to the estimate 
\begin{equation}\label{eq-minmax-intro}
\P\bigl\{ \min_{u\in\Sphere^{n-1}}\max_{t\in[1,2^{cn}]} \langle u, \B_n(t)\rangle <0\bigr\} \geq 1-\frac{1}{n},
\end{equation}
where the quantity in the brackets is the ``minimax'' of $1$-dimensional Gaussian process
$\langle u,\B_n(t)\rangle$ indexed over $\Sphere^{n-1}\times [1,2^{cn}]$.
We note that a comparison theorem for the minimax of
doubly indexed Gaussian processes was obtained in \cite{MR800188}
(see also \cite[Corollary~3.13 and Theorem~3.16]{MR1102015}), and was the central ingredient in proving 
the escape theorem of \cite{MR950977}.
\bigskip

The second main result of our paper deals with discrete-time random walks on the sphere. 
For any $\theta\in (0,\pi/2)$, we consider a Markov chain $W_{\theta}$ with values in $\S^{n-1}$ such that 
the angle between two consecutive steps is $\theta$ 
(i.e. $\langle W_\theta (j), W_\theta(j+1)\rangle=\cos\theta$, $j\in\N$) and the direction from $W(j)$ to $W(j+1)$ is 
chosen {\it uniformly at random} in the sense that
for any $u\in \S^{n-1}$, the distribution of $W_\theta(j+1)$ conditioned on $W_\theta(j)=u$
is uniform on the $(n-2)$-sphere $\S^{n-1}\cap \{x\in\R^n:\,\langle x,u\rangle=\cos\theta\}$.

\begin{theoC}\label{th-intro-random-walk-sphere}
For any $\theta\in (0,\pi/2)$, there exist $L=L(\theta)$ and $n_0=n_0(\theta)$ depending 
only on $\theta$ such that the following holds: Let $n\ge n_0$ and $W_\theta$ be the process with values in $\S^{n-1}$
described above. Then for all $N\ge Ln$ we have 
$$
\P\bigl\{0\mbox{ belongs to }\conv\{W_{\theta}(i):\,i\le N\}\bigr\}\ge 1-\exp(- n).
$$ 
\end{theoC}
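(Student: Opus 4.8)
The plan is to reduce the statement "the origin is in the convex hull of $\{W_\theta(i):i\le N\}$" to a statement about images of random matrices avoiding a subset of the sphere, exactly as is done in the paper for the other two models. Recall that by a standard duality argument, $0\notin\conv\{W_\theta(i):i\le N\}$ if and only if there exists $u\in\S^{n-1}$ with $\langle u,W_\theta(i)\rangle>0$ for all $i\le N$ (or $\ge 0$, but the boundary has measure zero and can be absorbed with a small perturbation). So we must show that with probability at least $1-\exp(-n)$, no unit vector $u$ has positive inner product with all $N\ge Ln$ of the walk's positions. Since the walk on the sphere is not Gaussian, I would not use Gordon's theorem verbatim; instead I would argue by a blocking / net argument. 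First, split the $N$ steps into $N/(Ln)\cdot Ln$ — more precisely into $cn$ disjoint blocks each of length $\ge L'$ for a constant $L'=L'(\theta)$; within each block the walk, started from an arbitrary point, has a decent chance of landing in essentially any prescribed hemisphere, and the key point is that for fixed $u$, after a bounded number of steps $\langle u,W_\theta(\cdot)\rangle$ has a non-negligible (bounded below by a dimension-free constant depending only on $\theta$) probability of being negative, and these events across the $cn$ blocks are "sufficiently independent" given the Markov structure.

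The key quantitative step is the following single-block estimate: there is $p_0=p_0(\theta)\in(0,1)$ and $L'=L'(\theta)$ such that for \emph{every} starting point $x\in\S^{n-1}$ and every fixed $u\in\S^{n-1}$,
$$
\P\bigl\{\exists\, 1\le j\le L':\ \langle u,W_\theta(j)\rangle<0 \ \big|\ W_\theta(0)=x\bigr\}\ \ge\ p_0,
$$
with $p_0$ and $L'$ independent of $n$. This should follow because after one step the walk has a spherical-cap-distributed increment; since $\theta<\pi/2$ is fixed, in $O(1/\theta)$ steps the walk can rotate by a constant angle in the two-plane $\mathrm{span}(x,u)$ (or rather, concentrate onto a great circle), and a fixed-probability fraction of directions pushes $\langle u,W_\theta(\cdot)\rangle$ below zero; one can make this rigorous by projecting onto $\mathrm{span}(u)^\perp\oplus\mathrm{span}(u)$ and noting the two-dimensional marginal of the walk behaves like a genuinely two-dimensional random rotation walk whose angular coordinate has a diffusive (hence, after $L'$ steps, order-$1$) spread. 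Iterating over $cn$ consecutive blocks and using the Markov property gives, for each fixed $u$,
$$
\P\bigl\{\langle u,W_\theta(i)\rangle\ge 0\ \ \forall i\le N\bigr\}\ \le\ (1-p_0)^{cn}.
$$

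To finish I would discretize the sphere: the condition $\langle u,W_\theta(i)\rangle\ge 0$ for all $i$ is not quite a $1$-Lipschitz event in $u$ uniformly, but a standard trick handles it — either pass to the condition $\langle u,W_\theta(i)\rangle\ge -\varepsilon$ on an $\varepsilon$-net $\Net\subset\S^{n-1}$ with $|\Net|\le(3/\varepsilon)^n$, and observe that if $0\notin\conv\{W_\theta(i)\}$ then some net point satisfies the relaxed inequality; then the single-block bound (with a slightly smaller $p_0'$ accounting for the $\varepsilon$-slack, which is harmless since increments are order-$\sqrt{1/n}$ in each coordinate but the relevant inner products are order $1$, so take $\varepsilon$ a small constant) combined with the union bound gives
$$
\P\bigl\{0\notin\conv\{W_\theta(i):i\le N\}\bigr\}\ \le\ (3/\varepsilon)^n (1-p_0')^{cn},
$$
which is at most $\exp(-n)$ once $c$ — equivalently $L=L(\theta)$ — is chosen large enough depending only on $\theta$ (and $\varepsilon,p_0'$, themselves functions of $\theta$ only). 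I expect the main obstacle to be the single-block estimate: one must show the constant $p_0$ genuinely does not degrade as $n\to\infty$. The danger is that a single step of $W_\theta$ moves the point by only a $\Theta(1)$ angle but in a "random transverse direction," so the component along a fixed $u$ changes by $\Theta(\sin\theta)\cdot(\text{coordinate of a uniform point on } \S^{n-2})$, which is of order $1/\sqrt n$ — suggesting one needs $\Theta(n)$ steps, not $O(1)$, to change the sign of $\langle u,W_\theta\rangle$. Resolving this requires the finer observation that the walk's position \emph{drifts} deterministically: $\E[\langle u, W_\theta(j+1)\rangle \mid W_\theta(j)] = \cos\theta\,\langle u,W_\theta(j)\rangle$, so $\langle u, W_\theta(j)\rangle$ is a supermartingale (in absolute value it contracts geometrically by $\cos\theta<1$ in expectation of the conditional mean), forcing it toward $0$ in $O(1/\log(1/\cos\theta))$ steps, after which the $\Theta(1/\sqrt n)$ fluctuations — now comparable to the mean — give a constant probability of crossing zero; one should also handle the case where $\langle u,x\rangle$ is already tiny separately, where a single step's transverse fluctuation suffices with constant probability. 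This drift-plus-fluctuation analysis is the heart of the argument, and the rest is the by-now routine net-plus-union-bound packaging used elsewhere in the paper.
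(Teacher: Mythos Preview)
Your approach differs from the paper's, and it has a real gap in the single-block estimate. You correctly note that $\Exp[\langle u,W_\theta(j+1)\rangle\mid W_\theta(j)]=\cos\theta\,\langle u,W_\theta(j)\rangle$, so the conditional mean of $X_j:=\langle u,W_\theta(j)\rangle$ contracts geometrically toward zero. But the net argument forces you to ask for $X_j<-\varepsilon$ with a \emph{constant} $\varepsilon>0$, and here the fluctuations are far too small. Writing $X_{j+1}=\cos\theta\,X_j+\sin\theta\,Z_j$ with $Z_j=\langle u,V_j\rangle$ for $V_j$ uniform on $\S^{n-1}\cap W_\theta(j)^\perp$, one has $\Exp[Z_j^2\mid W_\theta(j)]=(1-X_j^2)/(n-1)$, and iterating gives $\Exp\bigl[(X_j-X_0(\cos\theta)^j)^2\bigr]\le 1/(n-1)$ uniformly in $j$. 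An Azuma-type bound then shows that, starting from any $X_0\ge-\varepsilon$, the probability that $X_j<-\varepsilon$ for some $j\le L'$ is at most $L'\exp(-c\varepsilon^2 n)$ --- certainly not bounded below by a dimension-free $p_0$. In short, the ``rows'' $W_\theta(i)$ do \emph{not} satisfy property $\mathcal P(\tau,\delta)$ with dimension-free $\tau$: once mixed, the inner products $\langle u,W_\theta(i)\rangle$ live at scale $1/\sqrt n$, so your claim that ``the relevant inner products are order $1$'' is exactly where the argument breaks. Shrinking $\varepsilon$ to order $1/\sqrt n$ rescues the block bound but inflates the net to size $\exp\bigl(\Theta(n\log n)\bigr)$, yielding only $N=\Omega\bigl(n(\log n)^2\bigr)$ rather than $N=O(n)$.

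The paper's route is entirely different and \emph{does} use Gordon's theorem. It represents the walk via independent standard Gaussian vectors $Y_1,\dots,Y_N$ through the recursion $W_\theta(i+1)=(\alpha_{i+1}W_\theta(i)+Y_{i+1})/\beta_{i+1}$, so that the matrix with rows $W_\theta(i)$ equals $FG$, where $G$ is the standard Gaussian matrix with rows $Y_i$ and $F$ is a random lower-triangular matrix with entries built from the $\alpha_i,\beta_i$. Concentration of $\alpha_i,\beta_i$ around $\sqrt n\cot\theta$ and $\sqrt n/\sin\theta$ shows that $F$ is, with high probability, close in operator norm to a deterministic $\tilde F$ whose condition number depends only on $\theta$; then Theorem~\ref{gaussian-escape} (Gordon's escape theorem plus a perturbation) applies directly. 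The scale obstruction disappears because the analysis operates on the Gaussian increments $Y_i$, whose inner products with unit vectors are of order $1$, and Gordon's width estimate for the cone $\tilde F^{-1}(\R_+^N)$ replaces any discretisation of $\S^{n-1}$.
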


It follows from dimension considerations that the estimate of the number of steps is optimal up to a factor depending only on $\theta$. 
We note here that a related problem
for the standard spherical Brownian motion was studied in \cite{MR3161524}.

Let us outline the main ideas behind the proofs of Theorems~A and~C.
The following simple observation relates the question
about random walks to a problem dealing with random matrices:

Let $X(t)$ ($t\in[0,\infty)$ or $t\in\N\cup\{0\}$) be a random process
with values in $\R^n$, with $X(0)=0$; let $0=t_0<t_1< \dots <t_N$ be a collection of 
non-random points and assume that the increments $X(t_{i})-X(t_{i-1})$ are independent.
Define $A$ as the $N\times n$ random matrix with independent rows 
obtained by appropriately rescaling the increments $X(t_i)-X(t_{i-1})$, $i=1,2,\dots,N$. Then there exists a non-random 
$N\times N$ lower-triangular matrix $F$ such that the rows of $FA$ are precisely $X(t_i)$, $ i=1,2,\dots, N$. 
Thus, we can restate our problem about the convex hull of $X(t_i)$'s in terms 
of certain properties of the matrix $FA$. Namely, the convex hull of $X(t_i)$'s contains the origin in its interior 
if and only if for any unit vector $y$ in $\R^n$, the vector $FAy$ has at least one negative coordinate. 
Geometrically, this problem is reduced to estimating the probability that the image of $A$ {\it escapes}
(i.e.\ does not intersect) the set
$F^{-1}(\R_+^N)\cap \S^{N-1}$,
where $\R_+^N$ denotes the cone of positive vectors. For the standard Brownian motion, $A$ is the $N\times n$ standard 
Gaussian matrix. In this case, we apply Gordon's escape theorem \cite{MR950977} which estimates the probability that 
a random subspace uniformly distributed on the Grassmannian does not intersect with a given subset of $\S^{N-1}$. In
a more general case, 
when the image of $A$ is not uniformly distributed, Gordon's theorem cannot be applied. 
To treat that scenario, we prove a statement which can be seen as an extension
of Gordon's theorem to 
a broad class of random matrices, however, with considerable restrictions on the subsets of $\S^{N-1}$:

\begin{theoD}\label{intro-nongauss matrix th}
For any $\tau,\delta\in(0,1]$ and any $K>1$, there exist $L$ and $\eta>0$ depending only on $\tau$, $\delta$ 
and $K$ with the following property:
Let $N\ge Ln$ and let $A$ be an $N\times n$ random matrix with independent rows $(R_i)_{i\leq N}$ satisfying 
$$
\P\{ \langle R_i, y\rangle <-\tau\}\ge\delta, \text{ for any $y\in\S^{n-1}$ and any $i\leq N$.}
$$
Then for any $N\times N$ random matrix $F$, matrix $FA$ satisfies
\begin{align*}
\P\bigl\{\exists y\in \mathbb{S}^{n-1}, \; FAy\in\R_+^N\bigr\}&\le \exp(-\delta^2N/4)\\
&+\P\bigl\{\|A\|> K\sqrt{N}\bigr\}+\P\bigl\{ \Vert F-\Id\Vert >\eta \bigr\}.
\end{align*}
\end{theoD}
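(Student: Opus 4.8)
The plan is to run a conditional $\varepsilon$-net argument on the sphere $\S^{n-1}$, combined with a perturbation bound that transfers control from $FA$ to $A$ alone. First I would dispose of the "bad" events: on the complement of $\{\|A\|>K\sqrt N\}\cup\{\|F-\Id\|>\eta\}$ we have $\|FA-A\|\le\|F-\Id\|\,\|A\|\le\eta K\sqrt N$, so for every $y\in\S^{n-1}$ the vectors $FAy$ and $Ay$ differ in $\ell_2$-norm by at most $\eta K\sqrt N$. Hence if we can show that with high probability $Ay$ has a coordinate below $-\tau\sqrt N/2$ (say) for \emph{all} $y\in\S^{n-1}$ simultaneously, then choosing $\eta$ small enough that $\eta K\sqrt N<\tau\sqrt N/2$ forces $FAy$ to still have a strictly negative coordinate, which is exactly the negation of $FAy\in\R_+^N$. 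So the whole problem reduces to a statement about $A$: controlling $\max_{y\in\S^{n-1}}\min_{i\le N}\langle R_i,y\rangle$ from above.

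The second step is a single-point estimate. Fix $y\in\S^{n-1}$. By hypothesis each $\langle R_i,y\rangle<-\tau$ with probability at least $\delta$, independently across $i$, so the probability that $\langle R_i,y\rangle\ge -\tau$ for \emph{every} $i\le N$ is at most $(1-\delta)^N\le\exp(-\delta N)$. In other words, $\min_i\langle R_i,y\rangle\ge-\tau$ has probability at most $\exp(-\delta N)$. Now I would take a $\rho$-net $\Net$ on $\S^{n-1}$ with $\rho$ a small constant (depending on $\tau,\delta,K$) and $\card\Net\le(3/\rho)^n$. By a union bound, with probability at least $1-(3/\rho)^n\exp(-\delta N)$ every $z\in\Net$ satisfies $\min_i\langle R_i,z\rangle<-\tau$. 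Choosing $L=L(\tau,\delta,K)$ large enough that $(3/\rho)^n\le\exp(\delta N/2)$ whenever $N\ge Ln$, this failure probability is at most $\exp(-\delta N/2)$; a mild tightening of constants (running the single-point bound at level $-\tau$ but only asking to beat $-\tau/2$ after netting, and bookkeeping the $(1-\delta)^N\le e^{-\delta N}$ versus $e^{-\delta^2 N/4}$ slack) yields the stated $\exp(-\delta^2 N/4)$.

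The third step is the net-to-sphere transfer, and this is where the operator-norm bound on $A$ enters a second time. For arbitrary $y\in\S^{n-1}$ pick $z\in\Net$ with $\|y-z\|\le\rho$; then for each $i$, $\langle R_i,y\rangle\le\langle R_i,z\rangle+|\langle R_i,y-z\rangle|$. We cannot control $|\langle R_i,y-z\rangle|$ coordinate-by-coordinate, but we do control it \emph{after} we have already located a good index: on the event $\|A\|\le K\sqrt N$ we have $\|A(y-z)\|\le K\rho\sqrt N$, so $\sum_i\langle R_i,y-z\rangle^2\le K^2\rho^2 N$, hence at most $4K^2\rho^2$ fraction of the indices $i$ can have $|\langle R_i,y-z\rangle|>\tau/4$. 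Meanwhile, rather than asking a single index of $z$ to be small, I would strengthen the single-point/net estimate to say that a definite \emph{positive fraction} (at least $\delta/2$, say) of the indices $i$ satisfy $\langle R_i,z\rangle<-\tau$; this is again a Chernoff bound on a Binomial$(N,\delta)$ variable, costing $\exp(-cN)$ per net point, absorbed as before. Choosing $\rho$ small enough that $4K^2\rho^2<\delta/2$, for any $y$ there is at least one index $i$ with both $\langle R_i,z\rangle<-\tau$ and $|\langle R_i,y-z\rangle|\le\tau/4$, giving $\langle R_i,y\rangle<-\tau/2$, i.e.\ $(Ay)_i<-\tau\sqrt N/2\cdot N^{-1/2}\cdot\sqrt N$ — uniformly over $\S^{n-1}$. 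Combining with the perturbation bound from the first step (with $\eta<\tau/(2K)$) completes the argument.

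The main obstacle is precisely this net-to-sphere passage: a naive net argument bounds $|\langle R_i,y-z\rangle|$ only in an averaged ($\ell_2$) sense, so one cannot guarantee that \emph{the} index that was good for $z$ remains good for $y$. The fix — demanding a linear-in-$N$ number of good indices for each net point and then using $\|A\|\le K\sqrt N$ to show the perturbation spoils only an $O(\rho^2)$ fraction of them — is the crux, and it is what dictates the dependence of $L$ and $\eta$ on $\tau,\delta,K$. Everything else is a routine Chernoff-plus-union-bound computation, and the final probability is assembled as $\exp(-\delta^2 N/4)+\P\{\|A\|>K\sqrt N\}+\P\{\|F-\Id\|>\eta\}$ exactly as stated.
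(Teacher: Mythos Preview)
There is a genuine gap in your first-to-third step linkage. In step~1 you say you need, for every $y\in\S^{n-1}$, a coordinate of $Ay$ below $-\tau\sqrt{N}/2$, so that the $\ell_2$-perturbation $\|FAy-Ay\|\le\eta K\sqrt{N}$ cannot push it up past zero. But your step~3 only produces an index $i$ with $\langle R_i,y\rangle<-\tau/2$, a \emph{constant}; the garbled expression ``$(Ay)_i<-\tau\sqrt N/2\cdot N^{-1/2}\cdot\sqrt N$'' does not change this. A single coordinate of size $-\tau/2$ can be wiped out by a perturbation of $\ell_2$-norm $\eta K\sqrt{N}$ unless $\eta\lesssim\tau/(K\sqrt{N})$, which is not allowed since $\eta$ must depend only on $\tau,\delta,K$. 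So as written the argument does not close.

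The fix --- and this is exactly what the paper does --- is to stop tracking a single coordinate and instead track the $\ell_2$-norm of the negative part, $\|(Ay)_-\|$. Your own step~3 already shows that a fraction $\ge\delta/2$ of the coordinates of $Ay'$ are below $-\tau$ for each net point $y'$; this gives $\|(Ay')_-\|\ge\tau\sqrt{\delta N/2}$, which \emph{is} of order $\sqrt{N}$. The elementary inequality $\|x_-\|\ge\|y_-\|-\|x-y\|$ (the paper's Lemma~1) then handles both passages in one stroke: the net-to-sphere step becomes $\|(Ay)_-\|\ge\|(Ay')_-\|-\|A\|\,\|y-y'\|$, and the $F$-to-$\Id$ step becomes $\|(FAy)_-\|\ge\|(Ay)_-\|-\|F-\Id\|\,\|A\|$. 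With $\eta$ chosen so that $2K\eta\sqrt{N}<\tau\sqrt{\delta N/2}$ (i.e.\ $\eta=\tfrac{\tau\sqrt{\delta}}{2\sqrt{2}K}$, a constant), both inequalities leave $\|(FAy)_-\|>0$, hence $FAy\notin\R_+^N$. This also makes your counting argument in step~3 (about the fraction of spoiled indices) unnecessary: the negative-part lemma absorbs the net error directly at the $\ell_2$ level.
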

We use this result to deal with the random walk on $\Z^n$. 
For the random walks $W_\theta$ on
the sphere we follow, with some modifications, the same scheme as for processes in $\R^n$
with independent increments. 

\medskip
The paper is organized as follows.
Section~\ref{section-prel} contains preliminaries and notation.
Results about random matrices are given in Section~\ref{section-random-matrix-escape},
while corollaries for the Brownian motion and the standard random walk on $\Z^n$ are stated in Section~\ref{applications}. 
Section~\ref{section-sphere} is devoted to random walks on the sphere. Finally, we prove Theorem~B in Section~\ref{section-minimax}.

\section{Preliminaries}\label{section-prel}

In this section we introduce notation and discuss some classical or elementary facts.

For a finite set $I$, let $|I|$ be its cardinality.
Let $\R_+$ and $\R_-$ be the closed positive and negative semi-axes, respectively.
By $\{e_i\}_{i=1}^N$ we denote the standard unit basis in $\R^N$,
by $\|\cdot\|$ ~--- the canonical Euclidean norm and by $\langle \cdot,\cdot\rangle$ ~---
the corresponding inner product. Let $B_2^N$ and $\S^{N-1}$ be the Euclidean ball of radius $1$  in $\R^N$ and the unit sphere, respectively.

For $N\ge n$ and an $N\times n$ matrix $A$, let $s_{\max}(A)$ and $s_{\min}(A)$ be its largest and smallest singular values, respectively, i.e.\
$s_{\max}(A)=\|A\|$ (the operator norm of $A$) and $s_{\min}(A)=\inf\limits_{y\in \mathbb{S}^{n-1}}\|Ay\|$. When $A$ is an $N\times N$ invertible matrix, 
the condition number of $A$ is $\Vert A\Vert\cdot\Vert A^{-1}\Vert$.
Note that the condition number is equal to the ratio of the largest and
the smallest singular values of $A$.


Throughout this paper, $g$ denotes a standard Gaussian variable.
The following estimate is well known (see, for example, \cite[Lemma~VII.1.2]{Feller}):
\begin{equation}\label{feller ineq}
\P\{g\ge t\}=\frac{1}{\sqrt{2\pi}}\int\limits_t^\infty\exp(-r^2/2)\,dr
<\frac{1}{\sqrt{2\pi}t}\exp(-t^2/2),\;\;t>0.
\end{equation}
A random vector $X$ in $\R^n$ is {\it isotropic} if $\Exp X=0$ and the covariance matrix of $X$ is the identity i.e.\
$\Exp XX^t=\Id$.
The {\it standard Gaussian vector} $Y$ in $\R^n$ is a random vector with i.i.d.\ coordinates having the same law as $g$.
As a corollary of a concentration
inequality for Gaussian variables (see \cite[Theorem~4.7]{MR1036275} or \cite[Theorem~V.1]{MR0856576}), we have 
for any $\varepsilon >0$:
\begin{equation}\label{gaussian norm ineq}
\P\bigl\{(1-\varepsilon)\sqrt{n}\le\|Y\|\le (1+\varepsilon)\sqrt{n}\bigr\}\ge 1-2\exp(-\tilde c  \varepsilon^2n)
\end{equation}
for a universal constant $\tilde c>0$.
An  $N\times n$ matrix is called
the {\it standard Gaussian matrix}
if its  entries are 
i.i.d.\  having the same law as $g$. 
We denote this matrix by $G$ (and recall that
$N \ge n$). Then for any $t\ge 0$ we have
\begin{equation}\label{gaussian matrix ineq}
\begin{split}
\P&\bigl\{\sqrt{N}-\sqrt{n}-t\le s_{\min}(G)\le s_{\max}(G)\le \sqrt{N}+\sqrt{n}+t\bigr\}\\
&\ge 1-2\exp(-t^2/2)
\end{split}
\end{equation}
(see, for example, \cite[Corollary~5.35]{MR2963170}).

Given a vector $x\in\R^N$, we denote by $x_{+}$ and $x_{-}$ its positive and negative part,
respectively, i.e. 
$$x_{+}=\sum\limits_{i=1}^N{\max(0,\langle x,e_i\rangle)\,e_i} 
\quad\text{ and } \quad x_{-}=\sum\limits_{i=1}^N{\max(0,-\langle x,e_i\rangle)\,e_i}.$$
The following simple observation will be useful in the proof of the main theorems.

\begin{lemma}\label{negative part lemma}
Let $x,y\in\R^N$. Then $\Vert x_{-}\Vert\ge \Vert y_{-}\Vert -\Vert x-y\Vert$.
\end{lemma}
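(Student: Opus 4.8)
The plan is to argue coordinatewise. Fix $i\le N$ and write $a=\langle x,e_i\rangle$, $b=\langle y,e_i\rangle$. First I would observe the elementary scalar inequality $\max(0,-a)\ge \max(0,-b)-|a-b|$: indeed, if $b\ge 0$ the right-hand side is at most $0$ and there is nothing to prove; if $b<0$ then $\max(0,-b)=-b$ and $\max(0,-a)\ge -a = -b+(b-a)\ge -b-|a-b|$, as desired. In vector notation this says $\langle x_-,e_i\rangle \ge \langle y_-,e_i\rangle - |\langle x-y,e_i\rangle|$ for every $i$; equivalently the map $z\mapsto z_-$ is $1$-Lipschitz in each coordinate, so $\|(x-y)_-\|$... more precisely one gets the pointwise bound $x_- \ge y_- - |x-y|$ where $|x-y|$ is the entrywise absolute value and the inequalities are entrywise.

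Next I would upgrade the coordinatewise bound to the Euclidean norm. Since all three vectors $x_-$, $y_-$, $|x-y|$ have nonnegative entries, and $x_-\le y_-+|x-y|$ entrywise with both sides having nonnegative coordinates, taking Euclidean norms preserves the inequality: $\|x_-\|\le \|y_- + |x-y|\| \le \|y_-\| + \||x-y|\| = \|y_-\| + \|x-y\|$, where the middle step is the triangle inequality and the last equality holds because $\||x-y|\|=\|x-y\|$ (the Euclidean norm only sees the absolute values of coordinates). Wait — this gives the bound in the wrong direction. Let me instead use the correct entrywise inequality $x_- \ge y_- - |x-y|$. Hmm, one cannot simply take norms of an inequality that may have negative right-hand side.

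The clean way, which I would actually write, is: the map $T:\R^N\to\R^N$, $T(z)=z_-$, is $1$-Lipschitz for the Euclidean norm, because it is a coordinatewise composition of $1$-Lipschitz functions $t\mapsto\max(0,-t)$ and such coordinatewise maps are $1$-Lipschitz on $\ell_2^N$ (each coordinate of $T(x)-T(y)$ is bounded in absolute value by the corresponding coordinate of $x-y$, so $\|T(x)-T(y)\|\le\|x-y\|$). Hence $\|x_-\| = \|T(x)\| \ge \|T(y)\| - \|T(x)-T(y)\| \ge \|y_-\| - \|x-y\|$, using the reverse triangle inequality in the first step. This is the whole argument; there is no real obstacle, the only point requiring a line of care is the verification that each coordinate of $x_- - y_-$ is dominated in absolute value by the corresponding coordinate of $x-y$, which follows from $|\max(0,-a)-\max(0,-b)|\le|a-b|$ for reals $a,b$.
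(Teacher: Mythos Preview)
Your final argument is correct: the map $z\mapsto z_-$ is $1$-Lipschitz in the Euclidean norm because $|\max(0,-a)-\max(0,-b)|\le|a-b|$ coordinatewise, and then the reverse triangle inequality gives $\|x_-\|\ge\|y_-\|-\|x_- - y_-\|\ge\|y_-\|-\|x-y\|$.

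This is essentially the same proof as the paper's, with only a cosmetic difference in how the key inequality $\|x_- - y_-\|\le\|x-y\|$ is verified. The paper expands $\|x-y\|^2=\|(x_+-y_+)-(x_--y_-)\|^2$, observes that the cross term $\langle x_+-y_+, x_--y_-\rangle$ is nonpositive (a coordinatewise sign check), and drops the positive term $\|x_+-y_+\|^2$. You instead argue directly that each coordinate of $x_--y_-$ is bounded in absolute value by the corresponding coordinate of $x-y$. Both routes are one-liners; yours is arguably the more transparent of the two, while the paper's decomposition has the minor advantage of simultaneously showing that $z\mapsto z_+$ is $1$-Lipschitz as well. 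Either way, once you have $\|x_--y_-\|\le\|x-y\|$, the reverse triangle inequality finishes the job exactly as both you and the paper do. (Your exploratory false starts should of course be removed from the final write-up.)
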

\begin{proof}
Writing $x=x_{+}-x_{-}$ and $y=y_{+}-y_{-}$, we obtain
\begin{align*}
\Vert x-y\Vert^2&=\Vert (x_{+}-y_{+})-(x_{-}-y_{-})\Vert^2\\
&=\Vert x_{-}-y_{-}\Vert^2+\Vert x_{+}-y_{+}\Vert^2-2\langle x_{+}-y_{+},x_{-}-y_{-}\rangle \\
&\ge \Vert x_{-}-y_{-}\Vert^2\\
&\ge \left(\Vert y_{-}\Vert-\Vert x_{-}\Vert\right)^2,
\end{align*}
where the first inequality in the above formula holds since $\langle x_{+}-y_{+},x_{-}-y_{-}\rangle$ is non-positive.
\end{proof}

Given a compact set $S\subset \R^N$, the {\it Gaussian width} of $S$ is defined by
$$
w(S):=\Exp\sup_{x\in S} \langle Y,x\rangle,
$$
where $Y$ is the standard Gaussian vector in $\R^N$ (see \cite{candes}, \cite{MR2989474}, \cite{vershynin}).  
The following is a consequence of Urysohn's inequality (see, for example, Corollary~1.4 in \cite{MR1036275}) 
and the relation between the Gaussian and the mean width:
\begin{equation}\label{urysohn}
\sqrt{N-1}\left(\frac{{\Vol_N}(S)}{{\Vol_N}(B_2^N)}\right)^{1/N}\le w(S).
\end{equation}

Given a convex cone $C$ in $\R^N$, the {\it polar cone} $C^*$ of $C$ is defined by
$$
C^*:= \{x\in\R^N,\ \langle x, y\rangle \le 0 \ \text{ for any } y\in C\}.
$$
The next Lemma provides a useful relation between the Gaussian widths of the parts of a convex cone and its polar
enclosed in the unit Euclidean ball.
The lemma is proved in \cite{MR2989474} for intersections of cones with the unit sphere (see
\cite[Lemma~3.7]{MR2989474});
we put it here in a version more convenient for us.

\begin{lemma}\label{width of a cone}
Let $C\subset \mathbb{R}^N$ be a nonempty closed convex cone. Then
$$
w\left(C\cap B_2^N\right)^2+w\left(C^*\cap B_2^N\right)^2\le N.
$$
\end{lemma}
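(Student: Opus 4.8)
The plan is to realize each Gaussian width as the expected length of a Euclidean projection and then combine the two contributions via Moreau's conic decomposition. Throughout, for a nonempty closed convex cone $C\subset\R^N$ I write $\Pi_C$ for the metric (nearest-point) projection onto $C$, and I recall that $(C^*)^*=C$ for such cones.

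First I would record the one external ingredient: Moreau's decomposition theorem states that for every $x\in\R^N$ one has $x=\Pi_C(x)+\Pi_{C^*}(x)$ with $\langle \Pi_C(x),\Pi_{C^*}(x)\rangle=0$; in particular $\|x\|^2=\|\Pi_C(x)\|^2+\|\Pi_{C^*}(x)\|^2$. This follows from the variational characterization of the projection onto a convex cone, and can either be cited or derived in a few lines.

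The second and main step is the identity
\[
\sup_{x\in C\cap B_2^N}\langle Y,x\rangle=\|\Pi_C(Y)\|,
\]
valid for every vector $Y$, in particular for the standard Gaussian vector. For the upper bound I would split $\langle Y,x\rangle=\langle \Pi_C(Y),x\rangle+\langle \Pi_{C^*}(Y),x\rangle$; the second term is $\le 0$ since $x\in C$ and $\Pi_{C^*}(Y)\in C^*$, and the first is $\le\|\Pi_C(Y)\|\,\|x\|\le\|\Pi_C(Y)\|$ by Cauchy--Schwarz. For the matching lower bound I would test $x:=\Pi_C(Y)/\|\Pi_C(Y)\|$ when $\Pi_C(Y)\neq 0$ (this vector lies in $C\cap B_2^N$ because $C$ is a cone), for which the orthogonality relation gives $\langle Y,x\rangle=\|\Pi_C(Y)\|$; the degenerate case $\Pi_C(Y)=0$ is handled by $x=0\in C\cap B_2^N$.

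Finally, taking expectations yields $w(C\cap B_2^N)=\Exp\|\Pi_C(Y)\|$ and, applying the same identity to $C^*$ (using $(C^*)^*=C$), $w(C^*\cap B_2^N)=\Exp\|\Pi_{C^*}(Y)\|$. By Jensen's inequality $w(C\cap B_2^N)^2\le\Exp\|\Pi_C(Y)\|^2$ and $w(C^*\cap B_2^N)^2\le\Exp\|\Pi_{C^*}(Y)\|^2$, so summing and invoking the pointwise Pythagorean identity from Moreau's decomposition,
\[
w(C\cap B_2^N)^2+w(C^*\cap B_2^N)^2\le\Exp\bigl(\|\Pi_C(Y)\|^2+\|\Pi_{C^*}(Y)\|^2\bigr)=\Exp\|Y\|^2=N.
\]
There is no genuinely hard step; the only points requiring mild care are the measurability and integrability of $\|\Pi_C(Y)\|$ (immediate, as $\Pi_C$ is $1$-Lipschitz) and the case $\Pi_C(Y)=0$ in the supremum identity. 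Alternatively, since the version for $C$ intersected with $\S^{N-1}$ is already available in \cite{MR2989474}, one could instead deduce the $B_2^N$ form by observing that a nonzero linear functional on $B_2^N$ attains its maximum on the sphere; but the projection argument above is short and self-contained.
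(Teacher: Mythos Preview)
Your proof is correct and follows essentially the same route as the paper's: both invoke the Moreau decomposition $Y=\Pi_C(Y)+\Pi_{C^*}(Y)$ with orthogonal summands, bound $w(C\cap B_2^N)$ by $\Exp\|\Pi_C(Y)\|$, pass to second moments via Jensen, and conclude using $\Exp\|\Pi_C(Y)\|^2+\Exp\|\Pi_{C^*}(Y)\|^2=\Exp\|Y\|^2=N$. The only cosmetic difference is that you establish the supremum as an exact identity whereas the paper is content with the upper bound, which is all that is needed.
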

\begin{proof}
For any $x\in \R^N$, let $P_{C}x:={\rm arg}\inf_{y\in C} \Vert x-y\Vert$ be the projection of $x$ onto $C$. 
It can be checked that each vector $x\in \R^N$ can be decomposed as
\begin{equation}\label{eq-decomposition-vector}
x=P_Cx+P_{C^*}x,
\end{equation}
with $\langle P_Cx, P_{C^*}x\rangle =0$. As before, let $Y$ be the standard Gaussian vector in $\R^N$.
Having decomposition \eqref{eq-decomposition-vector} in mind, we can write
$$
w(C\cap B_2^N)= \Exp\sup_{x\in C\cap B_2^N} \langle Y, x\rangle \le 
\Exp\sup_{x\in C\cap B_2^N} \langle P_{C}Y, x\rangle,
$$
where the last inequality holds since $\langle P_{C^*}Y, x\rangle\le 0$ for all $x\in C$. 
We deduce that  
\begin{equation}\label{eq1-cone}
w(C\cap B_2^N)\le \Exp \Vert P_CY\Vert.
\end{equation}
Now using the decomposition (\ref{eq-decomposition-vector}) and the above inequality, we obtain
\begin{equation}\label{eq2-cone}
w(C\cap B_2^N)^2\le\Exp \Vert P_CY\Vert^2= \Exp \Vert Y\Vert^2- \Exp \Vert P_{C^*}Y\Vert^2
= N- \Exp \Vert P_{C^*}Y\Vert^2.
\end{equation}
Note that \eqref{eq1-cone} applied to the cone $C^*$ yields $w(C^*\cap B_2^N)^2\le \Exp \Vert P_{C^*}Y\Vert^2$. 
Plugging it into \eqref{eq2-cone}, we complete the proof.
\end{proof}


\section{Escape theorems for random matrices}\label{section-random-matrix-escape}

In this section, we estimate the probability that the image of 
a random $N\times n$ matrix $A$ {\it escapes} the intersection of a given cone with the unit sphere $\S^{N-1}$
(we shall restrict ourselves to considering a special family of convex cones in $\R^N$).
Similar questions have attracted considerable attention recently in connection with the theory of 
compressed sensing \cite{candes}.

Given a closed subset $S\subset\S^{N-1}$,
the problem of estimating the probability
$\P\{\Im(A)\cap S=\emptyset\}$ can be treated in different ways.
One may look at it as the question of bounding the diameter of the random section
$\conv(S,-S)\cap \Im(A)$ of the convex set $\conv(S,-S)$: clearly, $\Im(A)\cap S=\emptyset$ if and only if
$\diam\bigl(\conv(S,-S)\cap \Im(A)\bigr)<2$.
The study of random sections of convex sets is a central theme in the area of asymptotic
geometric analysis and its importance has been highlighted in Milman's proof of Dvoretzky's theorem \cite{MR0856576}, \cite{MR1036275}.
The question of estimating diameters of random sections of proportional dimension
was originally considered in \cite{MR0827766} and \cite{MR845980} in the case when the corresponding random subspace
is uniformly distributed on the Grassmannian (i.e.\ the randomness is given
by a standard Gaussian matrix). More recently, results for much more general
distributions of sections given by kernels and images
of random matrices were obtained, among others, in papers \cite{MR2195339} and \cite{Mendelson}.
In our setting, however, these papers do not seem directly applicable as they provide estimates for diameters up to a
constant multiple: in particular, if a convex set $K$,
say, satisfies $K\subset B_2^N \subset 2K$, and $E$
is a random subspace given by a kernel or an image of a random matrix,
those results only give a trivial bound $\diam\bigl(K\cap E\bigr)<C$
for a large constant $C$.
At the same time,
if $S=\S^{N-1}\cap\R_+^N$ then it is easy
to show that $\conv(S,-S)\subset B_2^N\subset \sqrt{2}\,\conv(S,-S)$.

When the matrix $A$ is Gaussian, a way of estimating the probability $\P\{\Im(A)\cap S=\emptyset\}$ 
which is more suitable in our setting
is to apply the following result of Gordon (see Corollary~3.4 in \cite{MR950977}):

\begin{theor}[Gordon's escape theorem]   \label{escape}
  {\em
  Let $S$ be a subset of the unit Euclidean sphere $\mathbb{S}^{N-1}$ in $\mathbb{R}^N$.
  Let $E$ be a random $n$-dimensional subspace of $\R^N$,
  distributed uniformly on the Grassmannian with respect to the associated Haar measure.
  Assume that
  $
    w(S) < \sqrt{N-n}.
  $
  Then
  $
  E \cap S = \emptyset
  $
  with probability at least
  $$
  1 - 3.5 \; \exp \Big( -\frac{1}{18}\Bigl( \frac{N-n}{\sqrt{N-n+1}} - w(S) \Bigr)^2 \Big).
  $$
  }
\end{theor}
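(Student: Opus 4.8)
The plan is to realize the random subspace $E$ as the kernel of a Gaussian matrix, rewrite the escape event as a positivity statement for a random minimum, and then combine Gordon's Gaussian min--max comparison inequality with Gaussian concentration. Set $m:=N-n$ and let $\Gamma$ be an $m\times N$ standard Gaussian matrix. Almost surely $\ker\Gamma$ has dimension $n$, and by rotation invariance of $\Gamma$ it is Haar-distributed on the Grassmannian, so we may take $E=\ker\Gamma$. We may also assume $S$ is closed, since passing to its closure changes neither $w(S)$ nor the event $E\cap S=\emptyset$. Because $x\in\ker\Gamma$ iff $\|\Gamma x\|=0$, and since $S$ is compact the infimum below is attained,
\begin{equation*}
\bigl\{E\cap S=\emptyset\bigr\}=\bigl\{f(\Gamma)>0\bigr\},\qquad f(\Gamma):=\inf_{x\in S}\|\Gamma x\|.
\end{equation*}
Thus it suffices to show that $f(\Gamma)>0$ with the required probability.

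The core step is the bound $\Exp f(\Gamma)\ge \frac{N-n}{\sqrt{N-n+1}}-w(S)$. Writing $\|\Gamma x\|=\sup_{y\in\S^{m-1}}\langle \Gamma x,y\rangle$, consider two centered Gaussian processes indexed by $(x,y)\in S\times\S^{m-1}$: with $\gamma$ an $N(0,1)$ variable independent of $\Gamma$, put $A_{x,y}:=\langle\Gamma x,y\rangle+\gamma$; with $g$ standard Gaussian in $\R^N$ and $h$ standard Gaussian in $\R^m$, independent, put $B_{x,y}:=\langle g,x\rangle+\langle h,y\rangle$. On $S\times\S^{m-1}$ both processes have variance $2$ at every point; the within-fibre increments coincide, $\Exp|A_{x,y}-A_{x,y'}|^2=\Exp|B_{x,y}-B_{x,y'}|^2=2-2\langle y,y'\rangle$; and for $x\ne x'$,
\begin{equation*}
\Exp|B_{x,y}-B_{x',y'}|^2-\Exp|A_{x,y}-A_{x',y'}|^2=2\,(1-\langle x,x'\rangle)(1-\langle y,y'\rangle)\ge 0 .
\end{equation*}
These are exactly the hypotheses of Gordon's min--max comparison theorem (extended from finite nets to the compact index set in the usual way), with $B$ playing the role of the process with the smaller minimax; hence $\Exp\inf_{x}\sup_{y}B_{x,y}\le\Exp\inf_{x}\sup_{y}A_{x,y}$. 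The right-hand side equals $\Exp f(\Gamma)$ since $\Exp\gamma=0$, and the left-hand side equals $\Exp\|h\|-\Exp\sup_{x\in S}\langle -g,x\rangle=\Exp\|h\|-w(S)$ by the symmetry of $g$. Since $\Exp\|h\|\ge \frac{m}{\sqrt{m+1}}$ (a standard lower bound for the mean of the chi distribution with $m$ degrees of freedom), the claimed estimate on $\Exp f(\Gamma)$ follows.

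For the concentration step, note that for each fixed unit vector $x$ the map $\Gamma\mapsto\|\Gamma x\|$ is $1$-Lipschitz in the Hilbert--Schmidt norm, hence so is the infimum $f$. Viewing $m\times N$ matrices as $\R^{mN}$ with the standard Gaussian measure, the Gaussian concentration inequality yields $\P\{f(\Gamma)\le\Exp f(\Gamma)-t\}\le\exp(-t^2/2)$ for all $t\ge 0$. Let $\mu:=\frac{N-n}{\sqrt{N-n+1}}-w(S)$. If $\mu>0$, take $t=\Exp f(\Gamma)\ge\mu$ to get
\begin{equation*}
\P\bigl\{E\cap S=\emptyset\bigr\}=\P\{f(\Gamma)>0\}\ge 1-\exp(-\mu^2/2)\ge 1-3.5\,\exp\!\bigl(-\tfrac{1}{18}\mu^2\bigr),
\end{equation*}
which is the asserted bound. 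If $\mu\le 0$, then $w(S)<\sqrt{N-n}$ together with $\sqrt{m}-\tfrac{m}{\sqrt{m+1}}<\tfrac12$ forces $|\mu|<\tfrac12$, so $3.5\exp(-\tfrac{1}{18}\mu^2)>1$ and the asserted lower bound on the probability is negative --- there is nothing to prove.

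The step I expect to be the main obstacle is making the comparison fit Gordon's hypotheses. The natural comparison process $\langle g,x\rangle+\langle h,y\rangle$ has variance $2$ on $S\times\S^{m-1}$ while $\langle\Gamma x,y\rangle$ has variance $1$, and no rescaling simultaneously fixes both the within-fibre and the across-fibre conditions; adjoining the independent variable $\gamma$ to equalize variances is exactly what reduces the across-fibre comparison to the manifestly nonnegative quantity $(1-\langle x,x'\rangle)(1-\langle y,y'\rangle)$. The remaining ingredients --- the passage from Gordon's finite-index statement to the sphere, the chi-distribution mean bound, and matching the precise constants $\tfrac1{18}$ and $3.5$ (the argument above in fact gives the better constant $\tfrac12$) --- are routine.
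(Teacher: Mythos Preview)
The paper does not prove this statement; it is quoted as a known result of Gordon (Corollary~3.4 in \cite{MR950977}) and used as a black box in the proof of Theorem~\ref{gaussian-escape}. There is therefore no ``paper's own proof'' to compare against.

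That said, your argument is correct and is essentially Gordon's original proof. The three ingredients --- realizing $E$ as $\ker\Gamma$ for an $(N-n)\times N$ standard Gaussian matrix, applying Gordon's Gaussian min--max comparison to the pair $A_{x,y}=\langle\Gamma x,y\rangle+\gamma$ and $B_{x,y}=\langle g,x\rangle+\langle h,y\rangle$ to obtain $\Exp\min_{x\in S}\|\Gamma x\|\ge \Exp\|h\|-w(S)\ge \frac{N-n}{\sqrt{N-n+1}}-w(S)$, and then invoking Gaussian concentration for the $1$-Lipschitz map $\Gamma\mapsto\min_{x\in S}\|\Gamma x\|$ --- are exactly the steps in \cite{MR950977}. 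Your remark that the concentration step in fact yields the sharper bound $1-\exp(-\mu^2/2)$ (so that the constants $3.5$ and $1/18$ are slack) is also correct; the weaker constants in the stated form come from the particular concentration inequality Gordon used.
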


\bigskip

For the standard Gaussian matrix $G$, its image is uniformly distributed on the
Grassmannian, and Gordon's result provides an efficient estimate of
probability $\P\{\Im G\cap S=\emptyset\}$, as long as we have control over the Gaussian width of 
the set $S$. In our setting, the choice of $S$ is determined by the applications to
random walks; in fact, $S$ shall always be a spherical simplex satisfying certain additional assumptions. 
A standard approach would be to bound $w(S)$ in terms of the covering numbers of $S$ 
using the classical Dudley's inequality (see, for example, \cite[Theorem~11.17]{MR1102015}). 
However, in our case the set $S$ is relatively large,
so the upper bound given by Dudley's inequality is trivial (greater than $\sqrt{N}$). 
Instead, we will estimate the Gaussian width of $S$ using the following proposition which is a direct consequence
of Lemma~\ref{width of a cone} and inequality \eqref{urysohn}:

\begin{prop}\label{prop-width-cone}
Let $C$ be a convex cone in $\R^N$ and denote by $C^*$ its polar cone. Then 
$$
w(C\cap B_2^N)^2\leq N-(N-1)\left(\frac{{\Vol_N}(C^*\cap B_2^N)}{{\Vol_N}(B_2^N)}\right)^{2/N}.
$$
\end{prop}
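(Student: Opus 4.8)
The plan is to derive Proposition~\ref{prop-width-cone} by simply chaining the two tools that precede it in the text. Recall that Lemma~\ref{width of a cone} gives, for any nonempty closed convex cone $C\subset\R^N$,
$$
w(C\cap B_2^N)^2 \le N - w(C^*\cap B_2^N)^2,
$$
so it suffices to bound $w(C^*\cap B_2^N)$ from below by the volume-radius of $C^*\cap B_2^N$. This lower bound is precisely the content of inequality \eqref{urysohn}, applied with $S:=C^*\cap B_2^N$: it yields
$$
w(C^*\cap B_2^N) \ge \sqrt{N-1}\left(\frac{\Vol_N(C^*\cap B_2^N)}{\Vol_N(B_2^N)}\right)^{1/N}.
$$
Squaring this and substituting into the bound from Lemma~\ref{width of a cone} immediately gives the claimed inequality.

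There is one small point of care I would address first: to invoke Lemma~\ref{width of a cone} one needs $C$ to be a \emph{nonempty closed} convex cone, and to invoke \eqref{urysohn} one wants $S=C^*\cap B_2^N$ to be a compact set (which it is, being the intersection of a closed set with a compact ball). The polar $C^*$ of any set is automatically closed and convex, and it is a cone, so the application of \eqref{urysohn} to it is legitimate; likewise $C$ itself, being a convex cone stated in the hypothesis, can be taken closed (or one replaces it by its closure, which does not change $C\cap B_2^N$ up to measure-zero boundary effects and does not change $C^*$). If $C^*\cap B_2^N$ has empty interior its volume is $0$ and the inequality is the trivial bound $w(C\cap B_2^N)^2\le N$, which also follows from Lemma~\ref{width of a cone} since Gaussian width of a subset of $B_2^N$ is at most $\sqrt N$; so no degeneracy issue arises.

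Honestly, there is no real obstacle here — the proposition is labeled as a ``direct consequence'' of Lemma~\ref{width of a cone} and \eqref{urysohn}, and the proof is a two-line substitution. The only thing worth writing out is the algebra of the substitution, making sure the exponent $2/N$ in the final statement matches: from \eqref{urysohn} the volume-radius appears to the power $1/N$ inside $w(C^*\cap B_2^N)$, so squaring produces the power $2/N$, and the factor $(N-1)$ comes from squaring $\sqrt{N-1}$. Thus the whole argument is: apply \eqref{urysohn} to $S = C^*\cap B_2^N$, square, and plug into Lemma~\ref{width of a cone}. I would present it in exactly that order and keep it to three or four lines.
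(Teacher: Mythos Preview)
Your proposal is correct and matches the paper's intended argument exactly: the proposition is stated there as a direct consequence of Lemma~\ref{width of a cone} and inequality~\eqref{urysohn}, and your two-line substitution (apply \eqref{urysohn} to $S=C^*\cap B_2^N$, square, and plug into the inequality $w(C\cap B_2^N)^2\le N-w(C^*\cap B_2^N)^2$) is precisely that. The extra care you take with closedness and the degenerate zero-volume case is fine but not strictly needed for the level of rigor in the paper.
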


The next theorem will be applied in Sections~\ref{applications} and~\ref{section-sphere} to
the discretized Brownian motion and to random walks on the sphere.

\begin{theor}\label{gaussian-escape}
For any $\gamma \in (0,1]$ there exist positive $L$, $\kappa$  and $\eta$ depending on $\gamma$ 
such that the following is true:
For $N\ge Ln$, let $F$ be an $N\times N$ random matrix and $\tilde F$ be a deterministic invertible $N\times N$ 
matrix with the condition number satisfying
$\Vert \tilde F\Vert\cdot \Vert\tilde F^{-1}\Vert \le \gamma^{-1}$. If 
$G$ is the $N\times n$ 
standard Gaussian matrix,
then
$$
\P\bigl\{\exists y\in \mathbb{S}^{n-1}, \; FGy\in \R^N_{+}\bigr\}
\le    5.5 \; \exp ( -\kappa N )+ \P\bigl\{ \bigl\Vert F-\tilde F\bigr\Vert > \eta\Vert \tilde F\Vert\bigr\}.
$$
The statement holds with $L=64/\gamma^2$, $\kappa= 2L^{-2}/9$ and   
$\eta=\gamma/4L$.\end{theor}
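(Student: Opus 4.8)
The plan is to show that, outside an event covered by the two explicit terms on the right, every $y\in\S^{n-1}$ has $FGy\notin\R^N_+$, i.e.\ $\|(FGy)_-\|>0$. I would set $D:=\tilde F^{-1}(\R^N_+)$, a closed convex cone, and first record a deterministic chain of inequalities reducing everything to an escape property of $\Im(G)$. Since $v\in\R^N_+$ iff $v=\tilde F w$ with $w\in D$, and $\dist(x,\R^N_+)=\|x_-\|$, for every $y$ one has $\|(\tilde F Gy)_-\|=\inf_{w\in D}\|\tilde F(Gy-w)\|\ge s_{\min}(\tilde F)\,\dist(Gy,D)$, and since $D$ is a cone, $\dist(Gy,D)=\|Gy\|\,\dist(Gy/\|Gy\|,D)\ge s_{\min}(G)\,\dist(Gy/\|Gy\|,D)$. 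Applying Lemma~\ref{negative part lemma} to the vectors $FGy$ and $\tilde F Gy$ then gives
$$
\|(FGy)_-\|\ \ge\ s_{\min}(\tilde F)\,s_{\min}(G)\,\dist(Gy/\|Gy\|,D)\ -\ \|F-\tilde F\|\,\|Gy\|.
$$
So it suffices to control three things with the stated probability: an \emph{escape margin} $\dist(Gy/\|Gy\|,D)>\rho$ with $\rho:=1/(2L)$ for all $y$; the singular values of $G$; and the deviation $\|F-\tilde F\|$.

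For the escape margin I would use Gordon's escape theorem (Theorem~\ref{escape}), applicable because $\Im(G)$ is a uniformly distributed $n$-dimensional subspace. Put $S:=\{u\in\S^{N-1}:\dist(u,D)\le\rho\}=(D+\rho B_2^N)\cap\S^{N-1}$; then $\{$escape margin$>\rho\}=\{\Im(G)\cap S=\emptyset\}$, and Theorem~\ref{escape} yields this with the required probability once $w(S)<\sqrt{N-n}$ with a definite gap. I would bound $w(S)$ using $(D+\rho B_2^N)\cap B_2^N\subseteq(D\cap(1+\rho)B_2^N)+\rho B_2^N$ and subadditivity of the Gaussian width (with $w(B_2^N)\le\sqrt N$), getting $w(S)\le(1+\rho)\,w(D\cap B_2^N)+\rho\sqrt N$, and then bound $w(D\cap B_2^N)$ via Proposition~\ref{prop-width-cone}. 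The polar is $D^*=\tilde F^{t}(\R^N_-)$, where $\R^N_-=-\R^N_+$ (from $(\R^N_+)^*=\R^N_-$ and the identity $(\tilde F^{-1}C)^*=\tilde F^{t}C^*$ valid for invertible $\tilde F$); since $\|\tilde F^{t}x\|\le\|\tilde F\|\,\|x\|$, one checks $\tfrac1{\|\tilde F\|}\tilde F^{t}(\R^N_-\cap B_2^N)\subseteq D^*\cap B_2^N$, whence, using $\Vol_N(\R^N_-\cap B_2^N)=2^{-N}\Vol_N(B_2^N)$, $|\det\tilde F|^{1/N}\ge s_{\min}(\tilde F)$ and the condition-number bound $s_{\min}(\tilde F)/\|\tilde F\|\ge\gamma$,
$$
\left(\frac{\Vol_N(D^*\cap B_2^N)}{\Vol_N(B_2^N)}\right)^{1/N}\ \ge\ \frac{|\det\tilde F|^{1/N}}{2\|\tilde F\|}\ \ge\ \frac{\gamma}{2}.
$$
Proposition~\ref{prop-width-cone} then gives $w(D\cap B_2^N)^2\le N-(N-1)\gamma^2/4$, and plugging in $L=64/\gamma^2$, $\rho=\gamma^2/128$ and $n/N\le1/L$ produces $w(S)\le(1-c\gamma^2)\sqrt N<\sqrt{N-n}$ with $\frac{N-n}{\sqrt{N-n+1}}-w(S)\ge c'\gamma^2\sqrt N$ for universal $c,c'>0$; Theorem~\ref{escape} then bounds $\P\{\Im(G)\cap S\ne\emptyset\}$ by $3.5\exp(-\kappa_0 N)$ with $\kappa_0:=(c')^2\gamma^4/18$, and one checks $\kappa_0\ge\kappa=2L^{-2}/9$ for the stated $\kappa$.

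To finish I would take $t:=\sqrt{2\kappa N}$ in \eqref{gaussian matrix ineq}: with probability at least $1-2\exp(-\kappa N)$ one has $\sqrt N-\sqrt n-t\le s_{\min}(G)\le s_{\max}(G)\le\sqrt N+\sqrt n+t$, and since $\sqrt n\le\sqrt{N/L}$ and $t\le\gamma^2\sqrt N/96$ this forces $s_{\max}(G)/s_{\min}(G)\le2$. On the intersection of this event with $\{\Im(G)\cap S=\emptyset\}$ and $\{\|F-\tilde F\|\le\eta\|\tilde F\|\}$, every $y$ satisfies $\dist(Gy/\|Gy\|,D)>\rho$, so the cone bounds of the first paragraph give $\|(\tilde F Gy)_-\|\ge s_{\min}(\tilde F)\,s_{\min}(G)\,\dist(Gy/\|Gy\|,D)>s_{\min}(\tilde F)\,s_{\min}(G)\,\rho$; combining this with Lemma~\ref{negative part lemma}, $\|F-\tilde F\|\le\eta\|\tilde F\|$, $\|Gy\|\le s_{\max}(G)$, $s_{\max}(G)/s_{\min}(G)\le 2$ and $s_{\min}(\tilde F)/\|\tilde F\|\ge\gamma$,
$$
\|(FGy)_-\|\ >\ s_{\min}(\tilde F)\,s_{\min}(G)\,\rho-\eta\|\tilde F\|\,s_{\max}(G)\ \ge\ s_{\min}(G)\,\|\tilde F\|\,(\gamma\rho-2\eta)\ =\ 0,
$$
the last equality because $\eta=\gamma/(4L)$ gives $2\eta=\gamma/(2L)=\gamma\rho$. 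Hence $FGy\notin\R^N_+$ for all $y$ on this event, and a union bound over the three exceptional events yields $3.5\exp(-\kappa_0 N)+2\exp(-\kappa N)+\P\{\|F-\tilde F\|>\eta\|\tilde F\|\}\le5.5\exp(-\kappa N)+\P\{\|F-\tilde F\|>\eta\|\tilde F\|\}$, using $\kappa\le\kappa_0$.

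The main obstacle is the estimate $w(D\cap B_2^N)\le(1-c\gamma^2)\sqrt N$: $S$ is a large, proportional-dimensional subset of the sphere, so entropy-based (Dudley) bounds on its Gaussian width are useless, and one must instead route the argument through the volumetric inequality of Proposition~\ref{prop-width-cone}. The structural fact that makes this work is that the polar $D^*=\tilde F^{t}(\R^N_-)$ is again the image of an orthant, so its truncation by $B_2^N$ has volume comparable to that of $\R^N_-\cap B_2^N$ up to a factor depending only on the condition number of $\tilde F$ — this is precisely where the hypothesis $\|\tilde F\|\,\|\tilde F^{-1}\|\le\gamma^{-1}$ enters. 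The remaining work is constant bookkeeping: the values of $L$, $\eta$ and $\kappa$ must be balanced so that the escape margin $\rho$ delivered by Gordon's theorem exactly cancels the perturbation $2\eta\|\tilde F\|/s_{\min}(\tilde F)$ incurred in passing from $\tilde F$ to $F$ — encoded in the identity $\gamma\rho=2\eta$ — while still keeping $w(S)$ below $\sqrt{N-n}$ by enough of a margin to produce the exponent $\kappa$.
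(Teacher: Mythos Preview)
Your proof is correct and follows essentially the same route as the paper's: reduce via Lemma~\ref{negative part lemma} to an escape of $\Im(G)$ from a $\rho$-neighbourhood of the cone $D=\tilde F^{-1}(\R^N_+)$, bound the Gaussian width of that neighbourhood through Proposition~\ref{prop-width-cone} and the volume of $D^*=\tilde F^t(\R^N_-)$ (which is where the condition-number hypothesis enters), apply Gordon's theorem, and control $s_{\max}(G)/s_{\min}(G)\le 2$ via \eqref{gaussian matrix ineq}. Your escape set $S$ with $\rho=1/(2L)$ is literally the paper's set $\Gamma$ (since $2\eta\gamma^{-1}=1/(2L)$), and your width bound and constant bookkeeping match the paper's line by line; the only cosmetic difference is that you phrase the reduction through $\dist(Gy/\|Gy\|,D)$ while the paper writes the corresponding set inclusions directly.
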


\begin{proof} 
Let $\gamma\in (0,1)$ and take $L,\kappa,$ and $\eta$ as stated above.
In view of Lemma~\ref{negative part lemma} we have
\begin{align}
\P&\left\{\exists y\in \mathbb{S}^{n-1}, \; \left(FGy\right)_{-}=0\right\}\nonumber\\
&\le\P\left\{\exists y\in \mathbb{S}^{n-1}, \;\|(\tilde FGy)_{-}\|\le\|(F-\tilde F)Gy\|\right\}\nonumber\\
&\le\P\left\{\exists y\in \mathbb{S}^{n-1}, \;\|(\tilde FGy)_{-}\|\le \eta\Vert \tilde F\Vert\cdot\|G\|\right\}\nonumber 
+ \P\{\Vert F-\tilde F\Vert > \eta\Vert \tilde F\Vert\}.
\end{align}
Further,
\begin{align}
\P&\left\{\exists y\in \mathbb{S}^{n-1}, \;\|(\tilde FGy)_{-}\|\le \eta\Vert \tilde F\Vert\cdot\|G\|\right\}\nonumber\\
&\le\P\left\{\exists y\in \mathbb{S}^{n-1}, \; \tilde FGy \in \R^N_{+}+\eta\Vert \tilde F\Vert\cdot\|G\| B_2^N\right\}\nonumber\\
&\le\P\Bigl\{\exists y\in \mathbb{S}^{n-1}, \; \frac{Gy}{\Vert Gy\Vert} \in {\tilde F}^{-1}(\R^N_{+})
+\eta\Vert \tilde F\Vert \frac{\|G\|}{s_{\min}(G)} {\tilde F}^{-1}
\left(B_2^N\right)\Bigr\}\nonumber\\
&\le\P\Bigl\{\exists y\in \mathbb{S}^{n-1}, \; \frac{Gy}{\Vert Gy\Vert}
\in {\tilde F}^{-1}(\R^N_{+})+2\eta \cdot\gamma^{-1}B_2^N\Bigr\}\nonumber\\
&\hspace{1cm}+ \P\bigl\{\Vert G\Vert>2s_{\min}(G)\bigr\}\nonumber\\
&\le\P\Bigl\{{\rm Im}(G)\cap \bigl( {\tilde F}^{-1}(\R^N_{+})+2\eta \cdot\gamma^{-1}B_2^N\bigr)\cap \mathbb{S}^{N-1}\neq \emptyset\Bigr\}\label{p of escape gauss}
+ 2e^{-N/128},
\end{align}
where the last estimate follows from \eqref{gaussian matrix ineq}.

To control the probability of escaping in \eqref{p of escape gauss} with help
of Theorem~\ref{escape}, we have to estimate the Gaussian width of the set
$$\Gamma:=\bigl( {\tilde F}^{-1}(\R^N_{+})+2\eta\cdot \gamma^{-1}B_2^N\bigr)\cap \mathbb{S}^{N-1}.$$ 
Note that $\Gamma\subset (1+2\eta\cdot\gamma^{-1}){\tilde F}^{-1}(\R^N_{+})\cap B_2^N + 2\eta\cdot \gamma^{-1}B_2^N$. 
Therefore 
\begin{equation}\label{width-gamma}
w(\Gamma)\le (1+2\eta\cdot\gamma^{-1})\cdot w\left({\tilde F}^{-1}(\R^N_{+})\cap B_2^N\right)+  2\eta\cdot \gamma^{-1}\sqrt{N}.
\end{equation}
It remains to bound the Gaussian width of ${\tilde F}^{-1}(\R^N_{+})\cap B_2^{N}$.
Denote by $C$ the cone ${\tilde F}^{-1}(\R_{+}^N)$ and note that $C^*={\tilde F}^t(\R_{-}^N)$. Then we have
\begin{align*}
\Vol_N\bigl({\tilde F}^t(\R_{-}^N)\cap B_2^N\bigr)
&=\vert \det({\tilde F})\vert\cdot \Vol_N\bigl(\R_{-}^N\cap ({\tilde F}^t)^{-1}(B_2^N)\bigr)\\
&\ge \vert \det\bigl({\tilde F}\bigr)\vert\cdot\Vert {\tilde F}\Vert^{-N}\cdot 
\Vol_N\bigl(\R_{-}^N\cap B_2^N\bigr).
\end{align*}
Since $\vert \det\bigl({\tilde F}\bigr)\vert \ge \Vert {\tilde F}^{-1}\Vert^{-N}$, we get
$
\Vol_N\left(C^*\cap B_2^N\right)\ge \left(\gamma/2\right)^N\cdot 
\Vol_N\left( B_2^N\right).
$
Now, applying Proposition~\ref{prop-width-cone},  we deduce that 
\begin{equation}\label{width-our-cone}
w(C\cap B_2^N)\le \sqrt{(1-\gamma^2/8)N}.
 \end{equation}

Putting \eqref{width-gamma} and \eqref{width-our-cone} together, we get that
$$w(\Gamma)\le  \left(1+4\eta\cdot \gamma^{-1} -\gamma^2/16\right)\sqrt{N}.$$
The proof is finished by a straightforward application of Theorem~\ref{escape}.
\end{proof}

As we will see in the next sections, Theorem~\ref{gaussian-escape} provides a way 
to deal with the standard Brownian motion in $\R^n$ and
random walks $W_\theta$ on the sphere. To treat the standard walk on $\Z^n$,
we shall derive a statement covering a rather broad class of random matrices.
Let us introduce the following


\begin{defi}
A random variable $\xi$ is said to have property $\mathcal{P}(\tau,\delta)$ (or safisfy condition
$\mathcal{P}(\tau,\delta)$) for some $\tau,\delta\in(0,1]$ if $$\P\{\xi<-\tau\}\ge\delta.$$
A random vector $X$ in $\R^n$ is said to have property $\mathcal{P}(\tau,\delta)$ for $\tau,\delta\in(0,1]$ 
if for any $y\in\mathbb{S}^{n-1}$, the random variable $\langle X,y\rangle$ satisfies $\mathcal{P}(\tau,\delta)$.
\end{defi}

Obviously, the above property holds (for some $\tau$ and $\delta$) for any non-zero r.v.\ $\xi$ with $\Exp\xi=0$.
As the next elementary lemma shows, with some additional assumptions on moments of $\xi$, the numbers
$\tau$ and $\delta$ can be chosen as certain functions of the moments:

\begin{lemma}\label{lemma about negative spread}
Any random variable $\xi$ such that
$\Exp\xi=0$, $\Exp\xi^2=1$ and $\Exp|\xi|^{2+\varepsilon}\le B<\infty$ for some $\varepsilon>0$,
has the property $\mathcal{P}(\tau,\delta)$,
with $\tau$ and $\delta$ depending only
on $\varepsilon$ and $B$.
\end{lemma}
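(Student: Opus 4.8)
The plan is to prove that $\xi$ satisfies $\mathcal{P}(\tau,\delta)$ by first establishing a lower bound on $\Exp|\xi|\mathbf{1}_{\{\xi<-\tau\}}$ (or equivalently on $\Exp\xi_-$ restricted away from zero) and then using reverse H\"older/Paley--Zygmund type reasoning to convert this into a lower bound on the probability $\P\{\xi<-\tau\}$. Since $\Exp\xi=0$ we have $\Exp\xi_+=\Exp\xi_-=\tfrac12\Exp|\xi|$, and by Cauchy--Schwarz together with $\Exp\xi^2=1$ one gets $\Exp|\xi|\le 1$ but more importantly a {\it lower} bound $\Exp|\xi|\ge c(\varepsilon,B)>0$: indeed $1=\Exp\xi^2=\Exp|\xi|^{2/(1+\varepsilon')}\cdot|\xi|^{2\varepsilon'/(1+\varepsilon')}$ for a suitable exponent, and interpolating $L^2$ between $L^1$ and $L^{2+\varepsilon}$ via H\"older gives $1\le (\Exp|\xi|)^{\theta}(\Exp|\xi|^{2+\varepsilon})^{1-\theta}\le(\Exp|\xi|)^{\theta}B^{1-\theta}$ with $\theta=\theta(\varepsilon)\in(0,1)$, hence $\Exp|\xi|\ge B^{-(1-\theta)/\theta}=:2a>0$.

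Next I would pass to the negative part: $\Exp\xi_-=\tfrac12\Exp|\xi|\ge a$. For a threshold $\tau$ to be chosen, split $\Exp\xi_-=\Exp\xi_-\mathbf{1}_{\{\xi\ge-\tau\}}+\Exp\xi_-\mathbf{1}_{\{\xi<-\tau\}}$. The first term is at most $\tau$. The second term is bounded using H\"older with exponents $\tfrac{2+\varepsilon}{1+\varepsilon}$ and $2+\varepsilon$:
\begin{equation*}
\Exp\xi_-\mathbf{1}_{\{\xi<-\tau\}}\le\bigl(\Exp|\xi|^{2+\varepsilon}\bigr)^{1/(2+\varepsilon)}\bigl(\P\{\xi<-\tau\}\bigr)^{(1+\varepsilon)/(2+\varepsilon)}\le B^{1/(2+\varepsilon)}\bigl(\P\{\xi<-\tau\}\bigr)^{(1+\varepsilon)/(2+\varepsilon)}.
\end{equation*}
Choosing $\tau:=a/2$ yields $\Exp\xi_-\mathbf{1}_{\{\xi<-\tau\}}\ge a-\tau=a/2$, and therefore
\begin{equation*}
\P\{\xi<-\tau\}\ge\Bigl(\frac{a}{2B^{1/(2+\varepsilon)}}\Bigr)^{(2+\varepsilon)/(1+\varepsilon)}=:\delta>0,
\end{equation*}
with $\tau$ and $\delta$ depending only on $\varepsilon$ and $B$ (and we may shrink $\tau$ to lie in $(0,1]$ and $\delta$ correspondingly, which only weakens the statement). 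This gives property $\mathcal{P}(\tau,\delta)$.

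The only genuinely substantive point — and the step I would be most careful about — is the uniform lower bound $\Exp|\xi|\ge 2a>0$: it is exactly here that the moment hypothesis $\Exp|\xi|^{2+\varepsilon}\le B$ is used, preventing the mass of $\xi^2$ from escaping to infinity where $|\xi|$ could be made arbitrarily small relative to $\xi^2$. Concretely, for $\theta$ determined by $\tfrac12=\theta\cdot 1+(1-\theta)\cdot\tfrac{1}{2+\varepsilon}$, i.e.\ $\theta=\tfrac{1+\varepsilon}{2(1+\varepsilon)}\cdot\text{(appropriate constant)}$ — I would just solve $2=\theta+ (1-\theta)(2+\varepsilon)$ cleanly in the writeup — H\"older's inequality gives $\|\xi\|_2\le\|\xi\|_1^{\theta}\|\xi\|_{2+\varepsilon}^{1-\theta}$, so $1\le(\Exp|\xi|)^\theta B^{(1-\theta)/(2+\varepsilon)}$, which rearranges to the claimed bound. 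Everything else is a routine truncation-and-H\"older computation, and no step requires anything beyond the elementary inequalities already recorded in the preliminaries.
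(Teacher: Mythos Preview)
Your argument is correct. The interpolation step $\|\xi\|_2\le\|\xi\|_1^{\theta}\|\xi\|_{2+\varepsilon}^{1-\theta}$ with $\theta=\varepsilon/(2(1+\varepsilon))$ gives the uniform lower bound on $\Exp|\xi|$, and the subsequent H\"older/Paley--Zygmund step converts it into the probability estimate exactly as you describe. (Note also that $B\ge 1$ by Jensen, so automatically $a\le 1/2$ and hence $\tau=a/2\in(0,1]$; you do not need to shrink anything.) The minor exponent slips you flagged are cosmetic.

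The paper's proof reaches the same conclusion by a different, more ``hands-on'' route: rather than interpolating between $L^1$ and $L^{2+\varepsilon}$, it works with the layer-cake representation $\Exp\xi^2=\int_0^\infty\P\{\xi^2\ge t\}\,dt$ and uses the $(2+\varepsilon)$-moment bound to find a truncation level $L_\xi$ (depending on $\varepsilon,B$) past which the tail of $\xi^2$ contributes at most $1/2$; this yields $\Exp|\xi|\ge 1/(4L_\xi)$. It then writes $\Exp\xi_-=\int_0^\infty\P\{\xi\le -t\}\,dt$, bounds the tail $\int_{8L_\xi}^\infty$ again via the layer-cake estimate, and extracts $\P\{\xi<-2^{-5}L_\xi^{-1}\}\ge 2^{-8}L_\xi^{-2}$ from the remaining integral. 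Your H\"older-based argument is shorter and more structural, and makes the dependence of $\tau,\delta$ on $\varepsilon,B$ completely transparent; the paper's approach trades this for elementary tail calculus and avoids any interpolation. Either is perfectly adequate for the lemma.
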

\begin{proof}
Indeed, an easy calculation shows
that such $\xi$ satisfies
$$\int\limits_{{L_\xi}^2}^\infty\P\{\xi^2\ge t\}\,dt\le\frac{1}{2}$$
for some $L_\xi>0$ depending only on $B$ and $\varepsilon$.
Then
$$
\Exp|\xi|\ge\int\limits_{0}^{L_\xi}\P\{|\xi|\ge t\}\,dt
\ge \frac{1}{2L_\xi}\int\limits_0^{{L_\xi}^2}
\P\{\xi^2\ge t\}\,dt
\ge\frac{1}{4L_\xi},
$$
implying, as $\Exp\max(0,-\xi)=\frac{1}{2}\Exp|\xi|$,
\begin{align*}
\frac{1}{8L_\xi}
&\le\int\limits_0^\infty\P\{\xi\le -t\}\,dt\\
&\le\int\limits_0^{8L_\xi}\P\{\xi\le -t\}\,dt + \int\limits_{64{L_\xi}^2}^\infty\frac{1}{2\sqrt{t}}\P\{\xi^2\ge t\}\,dt\\
&\le\int\limits_0^{8L_\xi}\P\{\xi\le -t\}\,dt+\frac{1}{16L_\xi}.
\end{align*}
Hence, $\P\{\xi< -2^{-5}{L_\xi}^{-1}\}\ge 2^{-8}{L_\xi}^{-2}$.
\end{proof}

The following theorem will be used to treat the standard walk on $\Z^n$:

\begin{theor}\label{nongauss matrix th}
For any $\tau,\delta\in(0,1]$ and any $K>1$, there exist $L$ and $\eta>0$ depending only on $\tau$, $\delta$ 
and $K$ with the following property:
Let $N\ge Ln$ and let $A$ be an $N\times n$ random matrix with independent rows having
property $\mathcal{P}(\tau,\delta)$.
Then for any $N\times N$ random matrix $F$, matrix $FA$ satisfies
\begin{align*}
\P\bigl\{\exists y\in \mathbb{S}^{n-1}, \; FAy\in\R_+^N\bigr\}&\le \exp(-\delta^2N/4)\\
&+\P\bigl\{\|A\|> K\sqrt{N}\bigr\}+\P\bigl\{ \Vert F-\Id\Vert >\eta \bigr\}.
\end{align*}
\end{theor}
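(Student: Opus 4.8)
The plan is to mimic the structure of the proof of Theorem~\ref{gaussian-escape}, replacing the ``volume of the polar cone'' argument (which used rotational invariance of the Gaussian and Gordon's theorem) by a direct net argument that exploits only property $\mathcal{P}(\tau,\delta)$. First I would reduce, via Lemma~\ref{negative part lemma}, to the event $\{\exists y\in\S^{n-1}:\ \|(Ay)_-\|\le \eta\|A\|\,\|y\|^{-1}\cdot(\text{junk})\}$; more precisely, on the complement of $\{\|F-\Id\|>\eta\}$ and $\{\|A\|>K\sqrt N\}$ one has, for every $y\in\S^{n-1}$,
$$
\|(FAy)_-\|\ \ge\ \|(Ay)_-\|-\|(F-\Id)Ay\|\ \ge\ \|(Ay)_-\|-\eta K\sqrt N.
$$
Hence it suffices to bound $\P\{\exists y\in\S^{n-1}:\ \|(Ay)_-\|\le \eta K\sqrt N\}$ (on the good event $\|A\|\le K\sqrt N$), and the whole theorem follows once we show this probability is at most $\exp(-\delta^2 N/4)$ for a suitable $\eta=\eta(\tau,\delta,K)$ and $L=L(\tau,\delta,K)$.

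The core step is a union bound over a net. Fix $y\in\S^{n-1}$. The rows $R_i$ are independent and each satisfies $\P\{\langle R_i,y\rangle<-\tau\}\ge\delta$, so the number of indices $i\le N$ with $\langle R_i,y\rangle<-\tau$ stochastically dominates a Binomial$(N,\delta)$ variable; by a Chernoff/Bernstein bound, with probability at least $1-\exp(-\delta^2 N/2)$ at least $\delta N/2$ of the coordinates of $Ay$ are below $-\tau$, which forces $\|(Ay)_-\|\ge \tau\sqrt{\delta N/2}$. Now let $\mathcal{N}$ be a $\rho$-net of $\S^{n-1}$ with $|\mathcal{N}|\le(3/\rho)^n$; by the union bound, with probability at least $1-(3/\rho)^n\exp(-\delta^2 N/2)$ we have $\|(Ay')_-\|\ge\tau\sqrt{\delta N/2}$ simultaneously for all $y'\in\mathcal{N}$. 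On the event $\|A\|\le K\sqrt N$, for arbitrary $y\in\S^{n-1}$ pick $y'\in\mathcal{N}$ with $\|y-y'\|\le\rho$; then by Lemma~\ref{negative part lemma} (applied to $x=Ay$, $y=Ay'$),
$$
\|(Ay)_-\|\ \ge\ \|(Ay')_-\|-\|A(y-y')\|\ \ge\ \tau\sqrt{\delta N/2}-K\rho\sqrt N.
$$
Choosing $\rho:=\tfrac{\tau}{2K}\sqrt{\delta/2}$ gives $\|(Ay)_-\|\ge \tfrac{\tau}{2}\sqrt{\delta N/2}$ for all $y\in\S^{n-1}$, on an event of probability at least $1-(6K/(\tau\sqrt{\delta/2}))^n\exp(-\delta^2 N/2)$. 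Taking $L$ large enough (depending on $\tau,\delta,K$) so that $N\ge Ln$ absorbs the net cardinality factor, this probability is at least $1-\exp(-\delta^2 N/4)$. Finally set $\eta:=\tfrac{\tau}{4K}\sqrt{\delta/2}$, so that $\eta K\sqrt N<\tfrac{\tau}{2}\sqrt{\delta N/2}$ and the reduction from the first paragraph yields $\|(FAy)_-\|>0$ for all $y$ on the intersection of the three good events; a final union bound over the complements gives the claimed estimate.

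The main obstacle — really the only delicate point — is making the two thresholds compatible: the net must be fine enough that the perturbation term $K\rho\sqrt N$ does not swallow the lower bound $\tau\sqrt{\delta N/2}$, yet the resulting net cardinality $(C(\tau,\delta,K))^n$ must be controlled by the exponentially small probability $\exp(-\delta^2 N/2)$, which is exactly what the hypothesis $N\ge Ln$ buys us for $L$ chosen as a suitable function of $\tau,\delta,K$. One must also be slightly careful that ``$\langle R_i,y\rangle<-\tau$ with probability $\ge\delta$'' is used with the \emph{same} net point $y'$ for the Chernoff bound and for the approximation step — which is automatic since the net is fixed before sampling $A$. No other step requires more than the elementary inequalities already recorded in Section~\ref{section-prel}.
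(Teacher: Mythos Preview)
Your proposal is correct and follows essentially the same approach as the paper: reduce via Lemma~\ref{negative part lemma} to a lower bound on $\|(Ay)_-\|$, use Hoeffding on the indicators $\chi_i={\bf 1}\{\langle R_i,y\rangle<-\tau\}$ to get $\|(Ay')_-\|\ge\tau\sqrt{\delta N/2}$ for each fixed $y'$, take a union bound over a net, and absorb the net cardinality with $N\ge Ln$. The only cosmetic differences are that the paper uses the \emph{same} parameter $\eta$ for both the net mesh and the $\|F-\Id\|$ threshold (passing from the sphere to the net via Lemma~\ref{negative part lemma} in the contrapositive direction, which avoids introducing a separate $\rho$), and it defers the bound $\|A\|\le K\sqrt N$ until after the net reduction; your constants differ from the paper's by a factor of~$2$ for this reason, but the argument is the same.
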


\begin{proof}
Define $L$ as the smallest positive number satisfying
$$\Bigl(\frac{3}{\eta}\Bigr)^{1/L}\le\exp(\delta^2 /4),$$
where $\eta:=\frac{\sqrt{\delta}\,\tau}{2\sqrt{2}K}$. 
Now, take any admissible $N\ge Ln$ and let $A$ and $F$ be as stated above.

Let $\Net$ be an $\eta$-net on $\mathbb{S}^{n-1}$ of cardinality at most $\bigl(\frac{3}{\eta}\bigr)^n$.
In view of Lemma~\ref{negative part lemma} we have
\begin{align}
\P&\bigl\{\exists y\in \mathbb{S}^{n-1}, \; FAy\in\R_+^N\bigr\}\nonumber\\
&\le\P\bigl\{\exists y\in \mathbb{S}^{n-1}, \;\|(Ay)_{-}\|\le\|(F-\Id)Ay\|\bigr\}\nonumber\\
&\le \P\bigl\{\exists y\in \mathbb{S}^{n-1}, \;\|(Ay)_{-}\|
\le \eta\|A\|\bigr\} +\P\bigl\{ \Vert F-\Id\Vert >\eta \bigr\}\nonumber\\
&\le\P\bigl\{\exists y'\in \Net, \;\|(Ay')_{-}\|
\le 2\eta\|A\|\bigr\}+\P\bigl\{ \Vert F-\Id\Vert >\eta \bigr\}.\nonumber
\end{align}
Further,
\begin{align}
\P&\bigl\{\exists y'\in \Net, \;\|(Ay')_{-}\|\le 2\eta\|A\|\bigr\}\nonumber\\
&\le \P\bigl\{\exists y'\in \Net, \;\|(Ay')_{-}\|\le 2K\eta\sqrt{N}\bigr\}
+\P\bigl\{\|A\|> K\sqrt{N}\bigr\}.\label{nongaussian prop. net}
\end{align}
Fix any $y'\in\Net$. For all $i=1,2,\dots,N$, the random variable $\langle Ay',e_i\rangle$ satisfies 
the property $\mathcal{P}(\tau, \delta)$. For any $i\le N$, denote by $\chi_i$ the indicator function of the event 
$\{ \langle Ay',e_i\rangle < -\tau\}$. Then $(\chi_i)_{i\le N}$ are independent and $\mathbb{E}\chi_i\ge \delta$. 
Applying Hoeffding's inequality (see \cite[Theorem~1]{MR0144363}), we get
\begin{align*}
\P\Bigl\{|\{i\le N:\,\langle Ay',e_i\rangle< -\tau\}|\le \frac{\delta N}{2}\Bigr\}
&\le\P\Bigl\{ \frac{1}{N} \sum_{i\le N} (\chi_i-\mathbb{E}\chi_i) \le -\frac{\delta}{2}\Bigr\}\\
&\le \exp(-\delta^2 N/2).
\end{align*}
Therefore for any fixed $y'\in \Net$, we have
\begin{align*}
\P\bigl\{\|(Ay')_{-}\|\le 2K\eta\sqrt{N}\bigr\}
&\le \P\bigl\{|\{i\le N:\,\langle Ay',e_i\rangle\le -\tau\}|\le 4K^2\eta^2N/\tau^2\bigr\}\\
&\le \exp(-\delta^2 N/2).
\end{align*}
Combining the last estimate with \eqref{nongaussian prop. net} and the upper estimate for $|\Net|$, we get
\begin{align*}
\P&\bigl\{\exists y\in \mathbb{S}^{n-1}, \; FAy\in\R_+^N\bigr\}\\
&\le \Bigl(\frac{3}{\eta}\Bigr)^n\exp(-\delta^2 N/2)+\P\bigl\{\|A\|> K\sqrt{N}\bigr\}+\P\bigl\{ \Vert F-\Id\Vert >\eta \bigr\}.
\end{align*}
The result follows by the choice of $L$.
\end{proof}

\begin{rem}
Theorem~\ref{nongauss matrix th}, applied to the Gaussian matrix $G$, 
gives a weaker form of Theorem~\ref{gaussian-escape} (with more restrictions on the choice of $F$).
Let us emphasize that the theorems do not require $F$ to be independent from $G$.
This will be important in Section~\ref{section-sphere}.
\end{rem}


\section{Applications to random walks in $\R^n$}\label{applications}

In this section, we will apply the statements about random matrices to the Brownian motion and 
the standard walk on $\Z^n$. 

\begin{cor}\label{geometric growth}
For any $K>1$, there are constants $L$ and $\kappa$ depending only on $K$ such that the following holds.
 Let $N\ge Ln$ and $t_1,\dots,t_N$ be such that $t_i\ge K\cdot t_{i-1}$ for any $i=2\dots  N$ and $t_1>0$. 
Then 
$$
\P\bigl\{0\mbox{ belongs to the interior of }\conv\{\B_n(t_i):\,i\le N\}\bigr\}
\ge   1 - 5.5  \exp ( -\kappa N ).
$$
\end{cor}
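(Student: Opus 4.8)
The plan is to put the points $\B_n(t_i)$ into the form required by Theorem~\ref{gaussian-escape} and then apply that theorem. Set $t_0:=0$ and $d_i:=t_i-t_{i-1}$; since $t_i\ge Kt_{i-1}$ and $t_1>0$ we have $d_i>0$ for all $i\le N$. The increments $\B_n(t_i)-\B_n(t_{i-1})$ are independent centred Gaussian vectors with covariance $d_i\Id$, so the vectors $R_i:=d_i^{-1/2}\bigl(\B_n(t_i)-\B_n(t_{i-1})\bigr)$ are i.i.d.\ standard Gaussian in $\R^n$. Let $G$ be the $N\times n$ matrix with rows $R_i$ and let $F$ be the $N\times N$ lower-triangular matrix with $F_{ij}=\sqrt{d_j}$ for $j\le i$ and $F_{ij}=0$ otherwise; then by telescoping $\B_n(t_i)=(FG)_i$ for every $i$. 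As recalled in the introduction, $0$ lies in the interior of $\conv\{\B_n(t_i):i\le N\}$ if and only if $FGy\notin\R_+^N$ for every $y\in\S^{n-1}$; equivalently, $0$ fails to lie in that interior precisely on the event $\{\exists y\in\S^{n-1}:\,FGy\in\R_+^N\}$, whose probability is what we must bound.

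Before invoking Theorem~\ref{gaussian-escape} I would rescale the rows of $F$. The $i$-th row of $F$ has Euclidean norm $\bigl(\sum_{j\le i}d_j\bigr)^{1/2}=\sqrt{t_i}$, so with $D:={\rm diag}(\sqrt{t_1},\dots,\sqrt{t_N})$ the matrix $\hat F:=D^{-1}F$ has unit rows, and since $D^{-1}$ maps $\R_+^N$ onto itself we have $\{\exists y:\,FGy\in\R_+^N\}=\{\exists y:\,\hat FGy\in\R_+^N\}$. The crucial point is that $\hat F$ is invertible with condition number bounded by a function of $K$ alone. For $\|\hat F\|$ I would use the Schur test for matrices with nonnegative entries: writing $a_j:=\sqrt{d_j}$, from $d_j\le t_j$ and $t_j\le K^{-(i-j)}t_i$ one gets $a_j\le K^{-(i-j)/2}\sqrt{t_i}$ for $j\le i$, so every row sum and every column sum of $\hat F$ is at most $\sum_{m\ge0}K^{-m/2}=(1-K^{-1/2})^{-1}$, whence $\|\hat F\|\le(1-K^{-1/2})^{-1}$. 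For $\|\hat F^{-1}\|$ I would use that $\hat F^{-1}$ is lower bidiagonal with $(\hat F^{-1})_{ii}=\sqrt{t_i}/a_i$ and $(\hat F^{-1})_{i,i-1}=-\sqrt{t_{i-1}}/a_i$; the inequalities $d_i\ge\frac{K-1}{K}t_i$ and $d_i\ge(K-1)t_{i-1}$, both immediate from $t_{i-1}\le t_i/K$, bound these entries in absolute value by $\sqrt{K/(K-1)}$ and $1/\sqrt{K-1}$, so (the diagonal and subdiagonal parts each having norm equal to their largest absolute entry) $\|\hat F^{-1}\|\le\sqrt{K/(K-1)}+1/\sqrt{K-1}$. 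Consequently the condition number of $\hat F$ is at most some $\gamma^{-1}=\gamma(K)^{-1}\ge1$ depending only on $K$, so $\gamma\in(0,1]$.

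It then remains to apply Theorem~\ref{gaussian-escape} with this $\gamma$, taking both the random matrix and the deterministic matrix in its statement equal to the deterministic $\hat F$; the term $\P\{\|F-\tilde F\|>\eta\|\tilde F\|\}$ vanishes, and for $N\ge Ln$ with $L=64/\gamma^2$ we obtain $\P\{\exists y\in\S^{n-1}:\,\hat FGy\in\R_+^N\}\le 5.5\exp(-\kappa N)$ with $\kappa=2L^{-2}/9$. Combined with the reduction of the first paragraph, this yields the claimed bound $1-5.5\exp(-\kappa N)$, with $L$ and $\kappa$ depending only on $K$. The only genuine obstacle is the condition-number estimate for $\hat F$: it is the sole place where the geometric-growth hypothesis $t_i\ge Kt_{i-1}$ is used, and it is what forces $L$ and $\kappa$ to depend on $K$ (and to degrade as $K\downarrow 1$); the rest is routine bookkeeping on top of the already-established escape theorem.
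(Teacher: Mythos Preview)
Your proof is correct and follows essentially the same route as the paper: write $\B_n(t_i)$ as the rows of $FG$ with $G$ standard Gaussian and $F$ a deterministic lower-triangular matrix, bound the condition number of $F$ using the geometric-growth hypothesis, and apply Theorem~\ref{gaussian-escape} with $\tilde F=F$. The only cosmetic difference is the row normalization---the paper divides row $i$ by $\sqrt{t_i-t_{i-1}}$ whereas you divide by $\sqrt{t_i}$---which leads to slightly different but equally valid constants in the condition-number bound.
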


\begin{proof}
Let $c_K:=1+(K-1)^{-1/2}\sum_{j\ge 0} K^{-j/2}$ and $\gamma:= c_K^{-1}\cdot(1+(K-1)^{-1/2})^{-1}$ be two constants depending only on $K$ and take 
$L=64/\gamma^2$ and $\kappa:= 2L^{-2}/9$.

Denote $\delta_1:=\sqrt{t_1}$ and 
$\delta_i:=\sqrt{t_i-t_{i-1}}$ for any $i=2\dots N$. 
Observe that for any $j< i$, we have $\delta_i \ge K^{\frac{i-j-1}{2}}\sqrt{K-1}\cdot\delta_j$. 

Define $F$ as the $N\times N$ lower triangular matrix whose entries are given by $f_{ii}=1$ for any $i\le N$ and 
$f_{ij}=\frac{\delta_j}{\delta_i}$ for any $i>j$. One can easily check that $\Vert F\Vert\le c_K$. Moreover, the inverse 
of $F$ is  a lower bidiagonal matrix with $1$ on the main diagonal and $(\delta_i/\delta_{i+1})_{i<N}$ on the diagonal below. 
Hence $\Vert F^{-1}\Vert \le 1+(K-1)^{-1/2}$, and the condition number of $F$ satisfies 
$$
\Vert F\Vert\cdot\Vert F^{-1}\Vert \le \gamma^{-1}.
$$
Let $(R_i)_{i\le N}$ be the rows of $FG$. One can check that $R_i= \B_n(t_i)/\delta_i$ and therefore 
$$
0\in\conv\{\B_n(t_i):\,i\le N\}\Leftrightarrow 
0\in\conv\{R_i:\,i\le N\}
$$
Note that, by a standard separation argument, $0$ does not belong to the interior of $\conv\{R_i:\,i\le N\}$ 
if and only if $\rank (FG) <n$ or there is a vector $y\in \mathbb{S}^{n-1}$ such that 
$\langle FGy,e_i\rangle=\langle y,R_i\rangle \ge 0$ for any $i\le N$, where 
$(e_i)_{i\le N}$ denotes the canonical basis of $\R^N$.
Since with probability one we have $\rank(FG)=n$, the result follows by applying Theorem~\ref{gaussian-escape} with $\tilde F:=F$.
\end{proof}

Suppose $(t_i)$ is a finite increasing
sequence of points in $[0,1]$.
The above statement tells us that if $(t_i)$
contains a geometrically growing subsequence of
length $Ln$ for an appropriate $L>0$ then with high probability
the origin of $\R^n$ is contained in the
interior of $\B_n(t_i)$'s.
We shall apply this result to the case when
the $t_i$'s are generated by the Poisson point process 
independent from $\B_n$.

Recall that {\it the homogeneous Poisson point process in $[0,1]$} of intensity $s>0$ is a random discrete measure $N_s$
on $[0,1]$ such that
$1)$ for each Borel subset $B\subset[0,1]$, the random variable $N_s(B)$
has the Poisson distribution with parameter $s\mu(B)$, where $\mu$ is the usual Lebesgue measure on $\R$, and
$2)$ for any $j\in\N$ and pairwise disjoint Borel sets $B_1,B_2\dots,B_j\subset[0,1]$, the random variables
$N_s(B_1)$, $N_s(B_2),\dots,N_s(B_j)$ are jointly independent.
The measure $N_s$ admits a representation of the form
$$N_s=\sum\limits_{i=1}^\tau\delta_{\xi_i},$$
where $\xi_1,\xi_2,\dots$ are i.i.d.\ random variables uniformly distributed on $[0,1]$,
$\delta_{\xi_i}$ is the Dirac measure with the mass at $\xi_i$ and
$\tau$ is the random non-negative integer with the Poisson distribution with parameter $s$.

Theorem~3.1 of \cite{MR3161524} states that
if $\tau$ and the points $\xi_1,\xi_2,\dots,\xi_\tau$
are generated by the homogeneous PPP in $[0,1]$
of intensity $s\ge n^{Cn}$ then the convex hull
of $\B_n(\xi_i)$'s contains the origin in its interior with
probability at least $1-n^{-n}$.
In our next statement, we weaken the assumptions on $s$
at expense of decreasing the probability to $1-\exp(-n)$:
\begin{cor}
There is a universal constant $\tilde C>0$ with the following property:
Let $n\in\N$ and let $\B_n(t)$, $t\in[0,\infty)$, be the standard Brownian motion in $\R^n$.
Further, let $\tau$ and the points $\xi_1,\xi_2,\dots,\xi_\tau$
be given by the homogeneous Poisson process on $[0,1]$
of intensity $s\ge\exp(\tilde Cn)$, which is independent from $\B_n(t)$.
Then
$$\P\bigl\{0\mbox{ belongs to the interior of }\conv\{\B_n(\xi_i):\,i\le \tau\}\bigr\}\ge 1-\exp(-n).$$
\end{cor}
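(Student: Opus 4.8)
The plan is to deduce this corollary from Corollary~\ref{geometric growth} by showing that, with overwhelming probability, the random set $\{\xi_1,\dots,\xi_\tau\}$ generated by the Poisson process contains a geometrically growing subsequence of length at least $Ln$, where $L$ is the constant from Corollary~\ref{geometric growth} applied with some fixed $K$, say $K=2$. Since the Poisson process is independent from $\B_n$, we may condition on a realization of $\{\xi_i\}$ that contains such a subsequence $t_1<t_2<\dots<t_{Ln}$ with $t_i\ge K t_{i-1}$ and $t_1>0$; Corollary~\ref{geometric growth} then gives that $0$ is in the interior of $\conv\{\B_n(t_i):i\le Ln\}$, hence a fortiori in the interior of $\conv\{\B_n(\xi_i):i\le\tau\}$, with probability at least $1-5.5\exp(-\kappa Ln)$. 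Choosing $\tilde C$ large enough (depending on $K$, $L$, $\kappa$) will absorb the geometric-subsequence failure probability and the factor $5.5$, yielding the final bound $1-\exp(-n)$.

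The combinatorial core is the following: First I would partition $[0,1]$ into the dyadic-type blocks $I_j:=(2^{-j},2^{-j+1}]$ for $j=1,2,\dots,m$ with $m:=\lceil Ln\rceil$. If each block $I_j$ contains at least one point $\xi_i$, then picking one point from each block (in decreasing order of $j$) produces a sequence $t_1<t_2<\dots<t_m$ with consecutive ratio at least $2 = K$ and with $t_1>0$, which is exactly the input required by Corollary~\ref{geometric growth}. So it remains to bound the probability that some block $I_j$, $j\le m$, is empty. Conditionally on $\tau$, or more simply using the defining property of the Poisson process directly, $N_s(I_j)$ is Poisson with parameter $s\,\mu(I_j)=s\,2^{-j}$, so $\P\{N_s(I_j)=0\}=\exp(-s\,2^{-j})\le\exp(-s\,2^{-m})$. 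A union bound over $j=1,\dots,m$ gives that the probability that some block is empty is at most $m\exp(-s\,2^{-m})$.

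Now I would check the arithmetic: with $m=\lceil Ln\rceil$ and $s\ge\exp(\tilde C n)$, one has $s\,2^{-m}\ge\exp(\tilde C n)\,2^{-Ln-1}=\tfrac12\exp\big((\tilde C-L\ln 2)n\big)$, which is $\ge 2n$ (hence $m\exp(-s2^{-m})\le Ln\,\exp(-2n)\le \tfrac12\exp(-n)$ for $n$ large, and for all $n$ after possibly enlarging $\tilde C$) as soon as $\tilde C>L\ln 2$ and $\tilde C$ is chosen sufficiently large. Similarly $5.5\exp(-\kappa Ln)\le\tfrac12\exp(-n)$ once $\kappa L\ge 1+\tfrac{\ln 11}{n}$, i.e.\ for $L$ large, which we are free to assume by monotonicity of Corollary~\ref{geometric growth} in $L$. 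Combining the two failure probabilities via a union bound gives the claimed $1-\exp(-n)$.

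I do not anticipate a serious obstacle here; the only mild subtlety is bookkeeping of the two competing sources of randomness. The clean way to present it is: (i) fix $K=2$ and the corresponding $L,\kappa$ from Corollary~\ref{geometric growth}, enlarging $L$ if necessary so that $5.5\exp(-\kappa Ln)\le\tfrac12\exp(-n)$ for all $n\ge1$; (ii) define the event $\Event$ (depending only on $\{\xi_i\}$) that every dyadic block $I_1,\dots,I_{\lceil Ln\rceil}$ is nonempty, and show $\P(\Event^c)\le\tfrac12\exp(-n)$ for $\tilde C$ large; (iii) on $\Event$, apply Corollary~\ref{geometric growth} to the extracted subsequence, using independence of $\B_n$ from $\{\xi_i\}$ to integrate the conditional probability; (iv) combine. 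The main thing to be careful about is that the subsequence is deterministic once $\{\xi_i\}$ is fixed, so that conditioning is legitimate, and that the geometric-growth hypothesis $t_i\ge K t_{i-1}$ is genuinely satisfied by points drawn from consecutive dyadic blocks — which it is, since a point in $(2^{-j},2^{-j+1}]$ is at least twice a point in $(2^{-j-1},2^{-j}]$.
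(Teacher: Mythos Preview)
Your overall strategy matches the paper's, but there is a genuine gap in the combinatorial step. You claim that ``a point in $(2^{-j},2^{-j+1}]$ is at least twice a point in $(2^{-j-1},2^{-j}]$'', and hence that points extracted from \emph{consecutive} dyadic blocks satisfy $t_{i}\ge 2t_{i-1}$. This is false: if $q\in(2^{-j-1},2^{-j}]$ is close to the right endpoint $2^{-j}$ and $p\in(2^{-j},2^{-j+1}]$ is just above $2^{-j}$, then $p/q$ is arbitrarily close to $1$, not at least $2$. So the input hypothesis of Corollary~\ref{geometric growth} is not verified by your construction as written.

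The fix is minor and is exactly what the paper does: take roughly twice as many blocks (say $m\approx 2Ln$) and use only every \emph{other} one. If $\xi_{i_j}\in I_j$ for each $j$, then $\xi_{i_{j+2}}>2^{-(j+1)}\cdot 2=2\cdot 2^{-(j+1)}$ while $\xi_{i_j}\le 2^{-(j-1)}\cdot 2^{-1}\cdot 2=2^{-(j-1)}$... more transparently, $\xi_{i_{j+2}}$ lies to the right of the right endpoint of $I_{j+1}$, which equals the left endpoint of $I_j$ times $2$, and $\xi_{i_j}$ lies to the left of that same right endpoint of $I_{j+1}$ times $2$; unwinding, $\xi_{i_{j+2}}/\xi_{i_j}>2$. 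Equivalently, just observe that two blocks apart the intervals are separated by a full factor of $2$ in both endpoints, so any choice of representatives has ratio at least $2$. With this adjustment (and the corresponding enlargement of $m$, which only affects the choice of $\tilde C$), the rest of your argument goes through unchanged and coincides with the paper's proof.
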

\begin{proof}
Let $K:=2$ and $\kappa,L$ be as in Corollary~\ref{geometric growth}.
Then we define the constant
$\tilde C:=\max\bigl(\frac{32}{\kappa},8L\bigr)$.
Let $n\in\N$ and let $N_s$ be as stated above.
Take $m:=\lfloor \tilde Cn\rfloor$ and
$$I_{1}:=[0,K^{-{m}+1}];\;\;I_j:=(K^{j-m-1},K^{j-m}],\;
j=2,3,\dots,m.$$
From the definition of $N_s$, we have
\begin{align*}
\P\bigl\{N_s(I_j)>0\mbox{ for all }j=1,2,\dots,m\bigr\}
&\ge 1-\sum\limits_{j=1}^m \exp\bigl(-s\mu(I_j)\bigr)\\
&\ge 1-m\exp\bigl(-sK^{-m}\bigr).
\end{align*}
In particular, with probability at least $1-m\exp\bigl(-sK^{-m}\bigr)$
the set $\{\xi_i\}_{i=1}^\tau$
contains a subset $\{\xi_{i_1},\xi_{i_2},\dots,\xi_{i_{m}}\}$
such that $\xi_{i_j}\in I_j$ for every admissible $j$, hence
$\xi_{i_{j+2}}\ge K\xi_{i_j}$ for any $j\le m-2$.
Conditioning on the realization of $N_s$, we obtain
by Corollary~\ref{geometric growth}:
\begin{align*}
\P&\bigl\{0\mbox{ belongs to the interior of }
\conv\{\B_n(\xi_i):\,i\le \tau\}\bigr\}\\
&\ge 1-m\exp\bigl(-sK^{-m}\bigr)-5.5\exp(-\kappa \lfloor m/2\rfloor)\\
&\ge 1-\exp(-n),
\end{align*}
and the proof is complete.
\end{proof}

The last result of this section concerns the standard random walk $W(j)$ on $\Z^n$,
which is defined as a walk with independent increments such that
each increment $W(j+1)-W(j)$ is uniformly distributed on the set $\{\pm e_j\}_{j\le n}$.
We note that the random variables $\langle \sqrt{n/m}W(m),y\rangle$ ($m\in\N$, $y\in\S^{n-1}$)
are {\it not} uniformly subgaussian;
to be more precise, their subgaussian moment depends on the dimension $n$.
At the same time, the vectors $W(m)$ still have very strong concentration properties
as the next lemma shows:
\begin{lemma}\label{Zn: concentration of increments}
Let $W(j)$ ($j\ge 0$) be the standard walk on $\Z^n$
starting at the origin, and $m\ge n^4$ be any fixed integer.
Then the vector $X:=\sqrt{n/m}W(m)$ is isotropic
and satisfies for any $y\in\S^{n-1}$:
$$
\P\bigl\{|\langle X,y\rangle|\ge t\bigr\}
\le \exp\bigl(-2(mn)^{1/4}\bigr)+2\exp(-t^2/4),\;\;t>0.
$$
In particular, $\Exp|\langle X,y\rangle|^3\le 100$ for all $y\in\S^{n-1}$, and
$X$ has the property $\mathcal{P}(\tau,\delta)$ for some universal constants $\tau,\delta$.
\end{lemma}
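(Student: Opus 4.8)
The plan is to treat the three assertions in turn: isotropy, the tail bound, and the two consequences. Isotropy of $X=\sqrt{n/m}\,W(m)$ is immediate: the increments of $W$ are i.i.d., mean zero, with covariance $\frac1n\Id$ (each $\pm e_j$ appears with probability $\frac1{2n}$), so $\Exp W(m)W(m)^t=\frac mn\Id$ and hence $\Exp XX^t=\Id$. The bulk of the work is the tail estimate. Fix $y\in\S^{n-1}$ and write $\langle X,y\rangle=\sqrt{n/m}\sum_{k=1}^m Z_k$, where $Z_k:=\langle W(k)-W(k-1),y\rangle$ are i.i.d., bounded by $\|y\|_\infty$ in absolute value, mean zero, variance $1/n$. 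The obstacle is that $\|y\|_\infty$ can be as large as $1$, so the per-step bound only gives a subgaussian constant of order $1$ rather than $1/n$, which is not good enough to get the stated $\exp(-t^2/4)$ with an $n$-free constant.

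The way around this is a truncation/conditioning argument exploiting that $m\ge n^4$ is large. First I would split $y=y^{\mathrm{big}}+y^{\mathrm{small}}$ according to whether $|y_j|\ge m^{-1/4}$ or not; there are at most $\sqrt m$ large coordinates. For the small part, each step contribution $\langle W(k)-W(k-1),y^{\mathrm{small}}\rangle$ is bounded by $m^{-1/4}$, so after rescaling by $\sqrt{n/m}$ the martingale increments are bounded by $\sqrt{n/m}\cdot m^{-1/4}\le n^{-3/4}$, and Hoeffding--Azuma (or a direct Chernoff bound) gives $\P\{|\sqrt{n/m}\sum_k\langle\cdot,y^{\mathrm{small}}\rangle|\ge t/2\}\le 2\exp(-ct^2)$ with an absolute constant; one checks $c\ge 1/4$ suffices after fixing the split threshold, or one simply absorbs constants. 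For the large part: the number of steps $k\le m$ at which $W(k)-W(k-1)=\pm e_j$ for one of the $\le\sqrt m$ coordinates $j$ with $|y_j|\ge m^{-1/4}$ is a sum of i.i.d.\ indicators with mean $\le \sqrt m/n\le m^{3/4}/n\le m^{1/2}$ (using $m\ge n^4$, so $\sqrt m\ge n^2$ and $\sqrt m/n\le \sqrt m/\,$; more carefully the expected count is $m\cdot(\text{number of big coords})/n\le m\cdot\sqrt m/n$), and a Chernoff bound for this Poisson-like count shows it is at most $2(mn)^{1/4}\cdot(\text{something})$ except with probability $\exp(-2(mn)^{1/4})$. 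Conditioned on the (few) steps that touch the big coordinates, the contribution $\sqrt{n/m}\sum\langle W(k)-W(k-1),y^{\mathrm{big}}\rangle$ is itself a sum of that many bounded ($\le\sqrt{n/m}$) increments, and since its length is $\le 2(mn)^{1/4}$ ish and $\|y^{\mathrm{big}}\|_2\le 1$, a Hoeffding bound again yields $2\exp(-t^2/4)$ for the event $\{|\cdot|\ge t/2\}$, the variance proxy being $\le \frac nm\cdot(\text{count})\le 1$. Combining the rare-event probability $\exp(-2(mn)^{1/4})$ with the two subgaussian tails and a union bound over the split $t=t/2+t/2$ gives the claimed inequality (after adjusting the numerical constant in the exponent, which the statement's $\exp(-t^2/4)$ leaves room for).

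The hard part will be organizing this truncation cleanly so that all the exponents line up with the exact constants in the statement; however, since the lemma only asks for \emph{some} bound of this shape and the two corollaries below it only need \emph{some} universal $\tau,\delta$, I would not optimize constants. Finally, for the consequences: integrating the tail bound, $\Exp|\langle X,y\rangle|^3=\int_0^\infty 3t^2\,\P\{|\langle X,y\rangle|\ge t\}\,dt\le \int_0^\infty 3t^2\bigl(\exp(-2(mn)^{1/4})+2\exp(-t^2/4)\bigr)dt$; the Gaussian-type integral contributes an absolute constant, and the constant term $\exp(-2(mn)^{1/4})$ contributes a negligible amount on, say, $[0,(mn)^{1/8}]$ while beyond that point the Gaussian tail already dominates — so a routine splitting of the integral at some polynomial-in-$(mn)$ threshold gives $\Exp|\langle X,y\rangle|^3\le 100$ for all $y\in\S^{n-1}$ (using $m\ge n^4\ge 1$). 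Then Lemma~\ref{lemma about negative spread}, applied with $\varepsilon=1$ and $B=100$, shows $\langle X,y\rangle$ has property $\mathcal{P}(\tau,\delta)$ with universal $\tau,\delta$, which is exactly the in-particular claim. I would present isotropy and the two consequences briefly and devote the main paragraph of the written proof to the truncation argument.
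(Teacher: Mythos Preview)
Your plan for isotropy and for the two corollaries at the end is fine (the third-moment integral is indeed handled by cutting off at the deterministic bound $|\langle X,y\rangle|\le\|X\|\le\sqrt{mn}$, which is exactly what the paper does). The gap is in the tail bound, specifically in your treatment of the ``big'' coordinates.

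Take $y=n^{-1/2}(1,1,\dots,1)$. Since $m\ge n^4$, every coordinate satisfies $|y_j|=n^{-1/2}\ge m^{-1/4}$, so with your threshold every coordinate is ``big'', $y^{\mathrm{small}}=0$, and every one of the $m$ steps is a big step. Your claimed count bound ``$\le 2(mn)^{1/4}$'' is then false (the count is $m$), and your Hoeffding variance proxy $\frac{n}{m}\cdot(\text{count})$ equals $n$, not $\le 1$; the resulting tail is $2\exp(-t^2/(8n))$, which is dimension-dependent and does not give the lemma. No choice of threshold $\theta$ repairs this: to make the small-part proxy $n\theta^2\le O(1)$ you need $\theta\lesssim n^{-1/2}$, and then the crude per-step bound $\sqrt{n/m}$ on the big part combined with the step count is always too large. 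The information you are throwing away is the actual \emph{size} of each big $y_j$; keeping it forces you to control $\sum_k y_{j_k}^2$ rather than the raw count, at which point the big/small split is superfluous.

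The paper's argument bypasses the split entirely. Writing $s_k=\langle W(k)-W(k-1),y\rangle$, each $s_k$ is symmetric, so $\sum_k s_k$ has the same law as $\sum_k r_k s_k$ with $(r_k)$ independent Rademachers jointly independent of $(s_k)$. Now \emph{condition} on $(s_k)$ and apply Khintchine/Hoeffding to the Rademacher sum: the correct variance proxy is $\sum_k s_k^2$. Since each $s_k^2\in[0,1]$ and $\Exp\, s_k^2=1/n$, Hoeffding's inequality on the sum of squares gives
\[
\P\Bigl\{\sum_{k=1}^m s_k^2\ge \tfrac{2m}{n}\Bigr\}\le \exp(-2m/n^2)\le \exp\bigl(-2(mn)^{1/4}\bigr),
\]
the last inequality using $m\ge n^4\Rightarrow m\ge n^2(mn)^{1/4}$. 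On the complementary event the Rademacher bound yields $2\exp(-t^2/4)$ with no further work. This symmetrization-plus-conditioning on $\sum_k s_k^2$ is the idea your plan is missing.
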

\begin{proof}
The isotropicity of $X$ can be easily checked.
Fix for a moment any vector $y\in \S^{n-1}$.
The random variable $\langle X,y\rangle$ can be represented as
$$\langle X,y\rangle=\sqrt{n/m}\sum\limits_{k=1}^{m}s_k,$$
where the variables $s_1,s_2,\dots,s_m$ are i.i.d.\ and each
$$s_k:=\langle W(k)-W(k-1),y\rangle$$
is symmetrically distributed, has variance $\Exp{s_k}^2=\frac{1}{n}$
and takes values in the interval $[-1,1]$.
Applying Hoeffding's inequality to the sum $\sum\nolimits_{k=1}^m {s_k}^2$,
we get
\begin{equation}\label{Zn estimate for squares}
\P\Bigl\{\sum\limits_{k=1}^m {s_k}^2\ge \frac{2m}{n}\Bigr\}
\le \exp(-2m/n^2).
\end{equation}
Further, since $s_k$ is symmetric, the distribution of the sum $\sum\nolimits_{k=1}^m s_k$
is the same as the distribution of $\sum\nolimits_{k=1}^m r_ks_k$,
where $r_1,r_2,\dots,r_m$ are Rademacher variables jointly independent with
$s_1,s_2,\dots,s_m$. Conditioning on the values of $s_k$ and using
\eqref{Zn estimate for squares} and the Khintchine inequality, we obtain for every $t>0$:
\begin{align*}
\P&\Bigl\{\Bigl|\sum\limits_{k=1}^m s_k\Bigr|\ge m t\Bigr\}\\
&=\P\Bigl\{\Bigl|\sum\limits_{k=1}^m r_ks_k\Bigr|\ge m t\Bigr\}\\
&\le\P\Bigl\{\sum\limits_{k=1}^m {s_k}^2\ge \frac{2m}{n}\Bigr\}
+\P\Bigl\{\sum\limits_{k=1}^m {s_k}^2\le \frac{2m}{n}\mbox{ and }
\Bigl|\sum\limits_{k=1}^m r_ks_k\Bigr|\ge m t\Bigr\}\\
&\le \exp(-2m/n^2)+2\exp(-m n t^2/4).
\end{align*}
Whence, in view of the bound $m\ge n^2(m n)^{1/4}$, we get
\begin{equation}\label{Zn concentration for increments}
\P\bigl\{|\langle X,y\rangle|\ge t\bigr\}
\le \exp\bigl(-2(mn)^{1/4}\bigr)+2\exp(-t^2/4),\;\;t>0.
\end{equation}
The condition \eqref{Zn concentration for increments},
together with the bound $\|X\|\le \sqrt{mn}$, gives
$\Exp|\langle X,y\rangle|^3\le 100$. It remains to apply Lemma~\ref{lemma about negative spread}.
\end{proof}

The next lemma follows from well known concentration inequalities
for subexponential random variables (see, for example, \cite[Corollary~5.17]{MR2963170}):
\begin{lemma}\label{lemma for subexp rv}
There is a universal constant $\tilde C>0$ such that
for any $N\in\N$ and
independent centered random variables $\tilde\xi_1,\tilde\xi_2,\dots,\tilde\xi_N$, each satisfying
\begin{equation}\label{subexponential estimate for Zn}
\P\bigl\{\tilde\xi_i\ge t\bigr\}\le 3\exp(-t/4),\;\;t>0,
\end{equation}
we have
\begin{equation}\label{subexp sum est for Zn}
\P\Bigl\{\sum\limits_{i=1}^N\tilde \xi_i\ge \tilde CN\Bigr\}\le 40^{-N}.
\end{equation}
\end{lemma}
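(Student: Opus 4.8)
The plan is a one-sided Chernoff bound. The hypothesis controls only the \emph{upper} tail of each $\tilde\xi_i$, and this is exactly what is needed: it suffices to bound the exponential moment $\E e^{\lambda\tilde\xi_i}$ from above for one well-chosen $\lambda>0$ and then to fix the constant $\tilde C$ large enough.

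First I would fix $\lambda=1/8$ and use the trivial bound $\tilde\xi_i\le\tilde\xi_i^+:=\max(\tilde\xi_i,0)$, so that $\E e^{\lambda\tilde\xi_i}\le\E e^{\lambda\tilde\xi_i^+}$. Since $e^{\lambda\tilde\xi_i^+}\ge 1$, the layer-cake identity and the assumed tail bound give, using $1/(4\lambda)=2$,
$$\E e^{\lambda\tilde\xi_i^+}=1+\int_1^\infty\P\bigl\{\tilde\xi_i>\tfrac{\ln s}{\lambda}\bigr\}\,ds\le 1+3\int_1^\infty s^{-2}\,ds=4,$$
the integral being finite precisely because $\lambda<1/4$. (The centeredness hypothesis is not used in this one-sided estimate; it appears only because the summands occurring in the applications happen to be centered.) Then independence and Markov's inequality applied to $\exp\bigl(\tfrac18\sum_{i\le N}\tilde\xi_i\bigr)$ yield
$$\P\Bigl\{\sum_{i=1}^N\tilde\xi_i\ge\tilde CN\Bigr\}\le e^{-\tilde CN/8}\prod_{i=1}^N\E e^{\tilde\xi_i/8}\le\bigl(4\,e^{-\tilde C/8}\bigr)^N.$$

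It remains to choose any universal constant $\tilde C$ with $4\,e^{-\tilde C/8}\le 1/40$, for instance $\tilde C=41$ (since $8\ln 160<41$); this gives the claimed bound $40^{-N}$. There is no real obstacle here --- the computation is routine --- the only point worth noting is that one should work directly with $\E e^{\lambda\tilde\xi_i^+}$ rather than trying to invoke a two-sided Bernstein-type inequality, since the hypothesis bounds a single tail and the moments (equivalently the $\psi_1$-norm) of $\tilde\xi_i$ need not be finite.
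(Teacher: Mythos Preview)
Your argument is correct. The paper does not actually prove this lemma: it simply asserts that it ``follows from well known concentration inequalities for subexponential random variables'' and cites Vershynin's notes. Your direct one-sided Chernoff computation is precisely the elementary argument underlying such results, and has the merit of being self-contained under the exact hypothesis stated. Your remark that the centeredness assumption is unused, and that one should not try to invoke a two-sided Bernstein/$\psi_1$ inequality here since only the upper tail is controlled, is well taken; in fact in the paper's application the $\tilde\xi_i$ are truncations of nonnegative variables and are not literally centered, so your version fits the intended use at least as well as the cited reference.
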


In the next result, compared to Theorem~1.2 of \cite{MR3161524},
we decrease the bound on the number of steps
$N$ of the walk on $\Z^n$ sufficient to absorb the origin with high probability.

\begin{cor}\label{random walk on Zn}
There is a universal constant
$C>0$ with the following property:
Let $n,R\in\N$, $R\ge \exp(Cn)$ and let $W(j)$, $j\ge 0$,
be the standard random walk on $\Z^n$ starting at the origin. Then
$$\P\bigl\{0\mbox{ belongs to the interior of }
\conv\{W(j):\,j=1,\dots,R\}\bigr\}\ge 1-2\exp(-n).$$
\end{cor}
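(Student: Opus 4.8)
The plan is to mimic the argument for the Brownian motion (Corollary~\ref{geometric growth}), but replacing the Gaussian escape theorem (Theorem~\ref{gaussian-escape}) by its non-Gaussian counterpart (Theorem~\ref{nongauss matrix th}). First I would fix a large integer $m_0\ge n^4$ and, for an appropriate geometric ratio $K$ (say $K=2$) and an appropriate $L$, I would select a subsequence of times $m_i:=m_0 K^{i-1}$, $i=1,\dots,N$, with $N:=\lceil Ln\rceil$. Provided $R\ge m_0 K^{N-1}$, which holds once $C$ is large enough since $m_0 K^{N-1}\le n^4\cdot 2^{Ln+1}\le\exp(Cn)$ for suitable $C$, all these times are available among $1,\dots,R$. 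By the standard separation argument used in Corollary~\ref{geometric growth}, it suffices to bound the probability that some $y\in\S^{n-1}$ has $\langle W(m_i),y\rangle\ge 0$ for all $i\le N$.

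Next I would set $X_i:=\sqrt{n/m_i}\,W(m_i)$ and form the $N\times n$ matrix $A$ with rows $X_i$ — wait, that is not quite the right normalization, since the $X_i$ are not independent. Instead, writing $W(m_i)=\sum_{k\le i}\bigl(W(m_k)-W(m_{k-1})\bigr)$ with $m_0':=0$, I would let $A$ be the $N\times n$ matrix whose $i$-th row is the rescaled increment $R_i:=\sqrt{n/(m_i-m_{i-1}')}\,(W(m_i)-W(m_{i-1}'))$; these rows are independent, and each is (up to the harmless shift of starting point) distributed as a rescaled walk vector run for $m_i-m_{i-1}'\ge (K-1)m_0\ge n^4$ steps, so by Lemma~\ref{Zn: concentration of increments} each row has property $\mathcal P(\tau,\delta)$ for universal $\tau,\delta$. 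Then there is a non-random lower-triangular matrix $F_0$ (built from the ratios of the normalizing constants, exactly as in Corollary~\ref{geometric growth}) with the rows of $F_0 A$ equal to $\sqrt{n/m_i}\,W(m_i)$; its condition number is bounded by a universal constant because the increments grow geometrically. To put this in the exact form required by Theorem~\ref{nongauss matrix th} (which is stated with $F$ close to $\Id$, not to a fixed well-conditioned $\tilde F$), I would either invoke the remark that the non-Gaussian escape theorem extends, as the Gaussian one does, to a fixed invertible $\tilde F$ with bounded condition number — or, more cleanly, absorb $F_0$ by noting that $\{\exists y:\,F_0 Ay\in\R_+^N\}$ is the same event as $\{\exists y:\, Ay\in F_0^{-1}(\R_+^N)\}$ and re-run the short net argument of Theorem~\ref{nongauss matrix th} with the cone $F_0^{-1}(\R_+^N)$ in place of $\R_+^N$; since each row of $A$ has property $\mathcal P(\tau,\delta)$ along every direction, a fixed proportion of the coordinates of $F_0^{-1}Ay$ — equivalently, a fixed proportion of suitable linear functionals of the rows — remain bounded away from $0$ from below, which is all the proof uses.

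The remaining ingredient is the deterministic-matrix hypothesis $\|A\|\le K\sqrt N$, which I would verify with probability $1-2\exp(-n)$ using Lemma~\ref{lemma for subexp rv}: for a fixed $y$ in an $\varepsilon$-net of $\S^{n-1}$ of size $(3/\varepsilon)^n$, Lemma~\ref{Zn: concentration of increments} gives that $\langle R_i,y\rangle^2$ satisfies the subexponential tail \eqref{subexponential estimate for Zn} (after adjusting constants), so $\sum_{i\le N}\langle R_i,y\rangle^2\le \tilde C N$ off an event of probability $40^{-N}$; a union bound over the net, using $N\ge Ln$ with $L$ large, controls $\|A\|^2=\sup_y\|Ay\|^2$. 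Combining the escape estimate $\exp(-\delta^2 N/4)$ from Theorem~\ref{nongauss matrix th}, the norm bound, and (with the absorbing trick above) no further term from $F$, and using $N\ge Ln$ once more to beat the $(3/\eta)^n$ net factor, one gets total failure probability at most $2\exp(-n)$ once $C$ is chosen large enough in terms of the universal constants $\tau,\delta,K,L$. The main obstacle, and the only place genuine care is needed, is the mismatch between the ``$F$ near $\Id$'' formulation of Theorem~\ref{nongauss matrix th} and the ``$\tilde F$ well-conditioned'' formulation of Theorem~\ref{gaussian-escape}: one must check that the property $\mathcal P(\tau,\delta)$ of the rows of $A$ is stable enough under the fixed linear map $F_0^{-1}$ to push the net argument through, which it is, since $\mathcal P$ is a statement about \emph{all} directions simultaneously.
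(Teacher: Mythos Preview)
Your overall strategy is right, and you correctly flag the main difficulty: Theorem~\ref{nongauss matrix th} is stated with $F$ close to $\Id$, whereas your $F_0$ (built from geometric ratio $K=2$) is only well-conditioned. However, neither of your proposed workarounds actually closes this gap. There is no remark in the paper extending Theorem~\ref{nongauss matrix th} to a general well-conditioned $\tilde F$; the remark following that theorem says precisely the opposite, that the non-Gaussian result is a \emph{weaker} form of Theorem~\ref{gaussian-escape} with \emph{more} restrictions on $F$. And your second idea --- ``re-run the net argument with the cone $F_0^{-1}(\R_+^N)$'' --- does not go through. The directions in property~$\mathcal P(\tau,\delta)$ are vectors $y\in\S^{n-1}\subset\R^n$, while $F_0$ acts on $\R^N$, mixing the \emph{rows}. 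Concretely, the $i$-th coordinate of $F_0Ay$ is the dependent sum $\sum_{j\le i}(F_0)_{ij}\langle R_j,y\rangle$; property~$\mathcal P$ controls each summand individually but says nothing about such combinations, and with $K=2$ the off-diagonal contribution $\sum_{j<i}(F_0)_{ij}\approx\sum_{m\ge 1}2^{-m/2}=(\sqrt 2-1)^{-1}\approx 2.41$ dominates the diagonal entry~$1$, so you cannot argue that the sign is governed by the $i$-th row alone. The Hoeffding step in the proof of Theorem~\ref{nongauss matrix th} relies on the independence of the indicators $\{\langle R_i,y\rangle<-\tau\}$ across $i$, and that independence is destroyed once you pass to coordinates of $F_0Ay$.

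The paper's fix is disarmingly simple: instead of $K=2$, take the geometric ratio to be enormous, namely $\lambda:=\lceil 4/\eta^2+1\rceil$. Then $\sum_{j<i}f_{ij}\le\sum_{m\ge 1}\lambda^{-m/2}\le\frac{\eta/2}{1-\eta/2}\le\eta$, so $\|F-\Id\|\le\eta$ outright and Theorem~\ref{nongauss matrix th} applies with no modification. Since only $N=O(n)$ time steps are used, one still has $t_N\le\exp(Cn)$ for a (larger) universal~$C$, which is all the corollary asks. A secondary point you gloss over: the tail in Lemma~\ref{Zn: concentration of increments} carries an extra term $\exp\bigl(-2(mn)^{1/4}\bigr)$, so $\langle R_i,y\rangle^2$ is not literally subexponential; the paper handles this by truncating at level $(n(t_i-t_{i-1}))^{1/4}$ before invoking Lemma~\ref{lemma for subexp rv}.
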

\begin{proof}
{\it Definition of constants and the matrix $A$.}
Let $\tau,\delta>0$ be taken from Lemma~\ref{Zn: concentration of increments}
and $\tilde C$ ~--- from Lemma~\ref{lemma for subexp rv}.
Now, we define $K:=2\sqrt{\tilde C}$ and let $L$ and $\eta$ be taken from Theorem~\ref{nongauss matrix th}.
Finally, we define $C>0$ as the smallest positive number satisfying
$$\exp(Cn)\ge(28N)^4 \Bigl\lceil\frac{4}{\eta^2}+1\Bigr\rceil^{N}$$
for any $n\in\N$ and $N=n\lceil\max(L,4/\delta^2)\rceil$.

Fix any numbers $n>0$ and $R\ge \exp(Cn)$, and let $N:=n\lceil\max(L,4/\delta^2)\rceil$.
Further, let $t_i$ ($i=0,1,\dots,N$) be numbers from $\{0,1,\dots,R\}$,
with $t_0=0$, $t_1=(28N)^4$ and $t_{i}= \bigl\lceil\frac{4}{\eta^2}+1\bigr\rceil t_{i-1}$, $i=2,3,\dots,N$.
Denote
$$X_i:=\sqrt{n}(t_{i}-t_{i-1})^{-1/2}\bigl(W(t_{i})-W(t_{i-1})\bigr),
\;\;i=1,2,\dots,N.$$
Then the vectors are isotropic, jointly independent and,
in view of Lemma~\ref{Zn: concentration of increments}, satisfy
\begin{equation}\label{X_i concentration}
\P\bigl\{|\langle X_i,y\rangle|\ge t\bigr\}
\le \exp\bigl(-2(nt_i-nt_{i-1})^{1/4}\bigr)+2\exp(-t^2/4),\;\;t>0
\end{equation}
for all $y\in\S^{n-1}$.
We let $A$ to be the $N\times n$ random matrix with rows $X_i$.

{\it Estimating the norm of $A$.}
Let $\Net$ be a $1/2$-net on $\mathbb{S}^{n-1}$ of cardinality at most $5^n$.
Fix any $y'\in \Net$. For each $i=1,2,\dots,N$, let $\xi_i:=\langle X_i,y'\rangle^2$,
and let $\tilde\xi_i$ be its truncation at level $(nt_i-nt_{i-1})^{1/4}$, i.e.\
$$\tilde\xi_i(\omega)=\begin{cases}\xi_i(\omega),
&\mbox{if }\xi_i(\omega)\le (nt_i-nt_{i-1})^{1/4},\\0,&\mbox{otherwise.}\end{cases}$$
Note that, in view of \eqref{X_i concentration}, the variables $\tilde\xi_i$
satisfy \eqref{subexponential estimate for Zn}, and
$$\P\{\xi_i\neq\tilde\xi_i\}\le 3\exp\bigl(-(nt_i-nt_{i-1})^{1/4}/4\bigr).$$
Hence, by \eqref{subexp sum est for Zn} and the above estimate, we have
\begin{align*}
\P\{\|Ay'\|\ge \sqrt{\tilde CN}\}&=\P\Bigl\{\sum\limits_{i=1}^N\xi_i\ge \tilde CN\Bigr\}\\
&\le 40^{-n}+\P\bigl\{\xi_i\neq \tilde \xi_i\mbox{ for some }i\in\{1,2,\dots,N\}\bigr\}\\
&\le 40^{-n}+3\sum\limits_{i=1}^N\exp\bigl(-(nt_i-nt_{i-1})^{1/4}/4\bigr)\\
&\le 40^{-n}+3N\exp\bigl(-7Nn^{1/4}\bigr)\\
&\le 20^{-n}.
\end{align*}
Taking the union bound for all $y'\in\Net$ and applying the standard approximation argument,
we obtain $\|A\|\le 2\sqrt{CN}=K\sqrt{N}$ with probability at least $1-\exp(-n)$.

{\it Construction of the matrix $F$ and application of Theorem~\ref{nongauss matrix th}.}
Let $F$ be the $N\times N$ non-random lower-triangular
matrix, with the entries
$$f_{ij}=\sqrt{\frac{t_{j}-t_{j-1}}{t_i-t_{i-1}}},\;\;i\ge j.$$
Obviously, $FA$ is the matrix whose $i$-th row ($i=1,\dots,N$) is precisely
the vector
$$\sqrt{\frac{n}{t_i-t_{i-1}}}W(t_i).$$
Then, in view of the definition of $t_i$'s, we have
$$\|F-\Id\|\le \frac{\eta/2}{1-\eta/2}\le\eta.$$
Finally, applying Theorem~\ref{nongauss matrix th}, we obtain
\begin{align*}
\P&\bigl\{0\mbox{ belongs to the interior of }
\conv\{W(j):\,j=1,2,\dots,R\}\bigr\}\\
&\ge \P\bigl\{0\mbox{ belongs to the interior of }
\conv\{W(t_i):\,i=1,2,\dots,N\}\bigr\}\\
&=\P\bigl\{\rank A=n\mbox{ and }\Im(FA)\cap \R_+^n=\{0\}\bigr\}\\
&\ge 1-2\exp(-n).
\end{align*}
\end{proof}

\section{Random walks on the sphere}\label{section-sphere}


Let $n>1$ and $\theta\in(0,\pi/2)$. Here, we consider the Markov chain $W_\theta$
taking values on $\S^{n-1}$ such that the angle between two 
consecutive steps is $\theta$ i.e.\ for any $i\ge 1$ we have 
$\langle W_{\theta}(i),W_{\theta}(i+1)\rangle = \cos\theta$ a.s., and the direction from $W_\theta(i)$ to $W_\theta(i+1)$ is 
chosen uniformly at random. The latter condition means that
for any $u\in\S^{n-1}$, the distribution of $W_\theta(i+1)$ conditioned on $W_\theta(i)=u$,
is uniform on the $(n-2)$-sphere $\S^{n-1}\cap \{x\in\R^n:\,\langle x,u\rangle=\cos\theta\}$.
See \cite{MR0117795} for a study of these walks and some of their generalizations.

The question addressed in this section is how many 
steps it takes for $W_\theta$ to absorb the origin into its convex hull.
Note that the answer does not depend on the distribution of the first vector $W_\theta(1)$,
and we shall further assume that $W_\theta(1)$ is uniformly distributed on the sphere. 
The question can be equivalently reformulated 
as a problem of estimating $\pi/2$-covering time of $W_\theta$. For $\phi\in(0,\pi/2]$, a $\phi$-\textit{covering} of $\S^{n-1}$ 
is any subset $S$ of the sphere such that the geodesic distance from any point of the sphere to $S$ is at most $\phi$.
Then the $\phi$-covering time for $W_\theta$ is the random variable
$$T=\min\bigl\{N:\,\mbox{the set }\{W_\theta(i),\,i\le N\}\mbox{ is a $\phi$-covering of $\S^{n-1}$}\bigr\}.$$
A related problem of estimating $\phi$-covering time of the {\it spherical Brownian motion} was considered
in \cite{MR920264} and \cite{MR3161524}, for $\phi\to 0$ and $\phi=\pi/2$, respectively.
It is not clear whether the argument developed in \cite{MR3161524} can be adopted to the walks $W_\theta$.
Our approach to the above problem is based on the results of 
Section~\ref{section-random-matrix-escape} and is completely different from the argument in \cite{MR3161524}.

The walk $W_\theta$ can be constructively described as follows:
Let $Y_1,Y_2,\dots$ be a sequence of independent standard Gaussian vectors in $\R^n$.
Let $\beta_1:=\|Y_1\|$ and define
$$W_\theta(1):=\frac{Y_1}{\| Y_1\|}=\frac{Y_{1}}{\beta_{1}}.$$
Further, for any $i\ge 1$ let
\begin{equation}\label{def-walk-sphere}
W_\theta(i+1):= \frac{\alpha_{i+1}W_\theta(i)+Y_{i+1}}{\beta_{i+1}},
\end{equation}
where 
\begin{equation}
\begin{split}
\beta_{i+1}&:=\Vert \alpha_{i+1} W_{\theta}(i)+Y_{i+1}\Vert \quad \text{ and }\\
\alpha_{i+1}&:=\cot \theta\, \Vert P_i Y_{i+1}\Vert-\langle Y_{i+1}, W_{\theta}(i)\rangle,\;\;i\ge 1,\label{def-beta-alpha}
\end{split}
\end{equation} 
with $P_i$ denoting the (random) orthogonal projection onto the hyperplane orthogonal to $W_{\theta}(i)$.
It can be easily checked that
$$\beta_{i}=\frac{\|P_{i-1} Y_{i}\|}{\sin\theta},\;\;i\ge 2,$$
and that $W_{\theta}$ is the Markov process described at the beginning of the section.
For any $i=2,3,\dots$ the coefficients $\alpha_i$ and $\beta_i$ are random 
variables depending on $Y_i$ and $W_{\theta}(i-1)$. Using \eqref{feller ineq} and \eqref{gaussian norm ineq}, one can 
deduce the following concentration inequalities:
\begin{lemma}\label{concentration-alpha-beta}
There exist a universal constant $c>0$ such that for
$\delta_\theta:= c\min(1,\cot\theta)$ and for any $i=2,3,\dots$ and $\varepsilon>0$ we have
$$
\P\bigl\{(1-\varepsilon)\sqrt{n}\cot\theta\le \alpha_i\le (1+\varepsilon)\sqrt{n}\cot\theta\bigr\}
\ge 1-2\exp(-{\delta_{\theta}}^2 \varepsilon^2n)
$$
and
$$
\P\bigl\{(1-\varepsilon)\sin\theta/\sqrt{n}\le {\beta_i}^{-1}\le (1+\varepsilon)\sin\theta /\sqrt{n}\bigr\}
\ge 1-2\exp(-{\delta_{\theta}}^2 \varepsilon^2n).
$$
\end{lemma}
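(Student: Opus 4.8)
The plan is to condition on the first $i-1$ steps of the walk and thereby reduce both inequalities to one-dimensional Gaussian concentration. Fix $i\ge 2$ and condition on the $\sigma$-algebra generated by $Y_1,\dots,Y_{i-1}$. This determines $W_\theta(i-1)=:u\in\S^{n-1}$, while $Y_i$ remains a standard Gaussian vector independent of $u$. Decompose $Y_i=\langle Y_i,u\rangle u+P_{i-1}Y_i$: then $g':=\langle Y_i,u\rangle$ is a standard one-dimensional Gaussian, $P_{i-1}Y_i$ is a standard Gaussian vector in the hyperplane $u^\perp$, and the two are independent; in particular $\rho:=\|P_{i-1}Y_i\|$ has the distribution of the Euclidean norm of a standard Gaussian vector in $\R^{n-1}$ and is independent of $g'$. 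Recalling $\beta_i=\rho/\sin\theta$ and $\alpha_i=\cot\theta\,\rho-g'$, both assertions become statements about $\rho$ and $g'$ only, with conditional bounds that do not depend on $u$, hence valid unconditionally.

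For $\beta_i^{-1}=\sin\theta/\rho$ I would invoke \eqref{gaussian norm ineq} with $n-1$ in place of $n$: with probability at least $1-2\exp(-\tilde c\varepsilon'^2(n-1))$ one has $(1-\varepsilon')\sqrt{n-1}\le\rho\le(1+\varepsilon')\sqrt{n-1}$, and on this event $\beta_i^{-1}$ lies in $[(1-\varepsilon)\sin\theta/\sqrt n,(1+\varepsilon)\sin\theta/\sqrt n]$ once $\varepsilon'$ is chosen as a fixed fraction of $\varepsilon$ (which also absorbs the harmless gap between $\sqrt{n-1}$ and $\sqrt n$). For $\alpha_i$, write $\alpha_i-\sqrt n\cot\theta=\cot\theta(\rho-\sqrt n)-g'$; on the same event the first term is at most roughly $\varepsilon'\sqrt n\cot\theta$ in absolute value, and for the second term the Gaussian tail bound \eqref{feller ineq} gives $|g'|\le\tfrac{\varepsilon}{4}\sqrt n\cot\theta$ with probability at least $1-2\exp(-\varepsilon^2 n\cot^2\theta/32)$. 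A union bound then places $\alpha_i$ in the desired window.

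It remains to combine the two exponents into a single rate $\exp(-\delta_\theta^2\varepsilon^2 n)$ with $\delta_\theta=c\min(1,\cot\theta)$ for a universal $c$: the $\rho$-term contributes a rate of order $\varepsilon^2 n$ with a universal constant, whereas the $g'$-term contributes a rate of order $\varepsilon^2 n\cot^2\theta$, so the worse of the two is controlled by $\min(1,\cot\theta)^2$ — which is precisely where the factor $\cot\theta$ in $\delta_\theta$ comes from, the point being that the additive $O(1)$ perturbation $g'$ is negligible relative to the target scale $\sqrt n\cot\theta$ only up to a loss proportional to $1/\cot\theta$. I do not expect any genuinely hard step; the only real bookkeeping is the low-parameter regimes (small $n$, or $\varepsilon$ small enough that $\delta_\theta^2\varepsilon^2 n\le\ln 2$), where the asserted lower bound on the probability is $\le 0$ and hence automatic, so after setting those aside the choice of $c$ is immediate. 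The main obstacle, such as it is, is thus simply getting the $\theta$-dependence of the rate right while juggling the two sources of fluctuation and the $\sqrt{n-1}$-versus-$\sqrt n$ normalization.
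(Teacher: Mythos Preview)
Your approach is correct and is exactly what the paper has in mind: the paper does not give a proof at all but merely says the lemma follows from \eqref{feller ineq} and \eqref{gaussian norm ineq}, and you have carried out precisely that computation --- conditioning to make $Y_i$ a fresh Gaussian, splitting into the tangential part $\rho=\|P_{i-1}Y_i\|$ and the radial part $g'=\langle Y_i,W_\theta(i-1)\rangle$, and then applying norm concentration to $\rho$ and the one-dimensional tail bound to $g'$. Your identification of $\min(1,\cot\theta)$ as arising from the worst of the two exponential rates is also the right explanation.
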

\noindent
Moreover, \eqref{gaussian norm ineq} immediately implies
\begin{equation}\label{beta1 concentration}
\P\bigl\{(1-\varepsilon)/\sqrt{n}\le {\beta_1}^{-1}\le (1+\varepsilon)/\sqrt{n}\bigr\}
\ge 1-2\exp(-c \varepsilon^2n),\;\;\varepsilon>0,
\end{equation}
provided that the constant $c$ is sufficiently small.
Before we state the main result of the section, let us consider the following elementary lemma:
\begin{lemma}\label{calculation}
For any $q\in(0,1)$ and $0<\varepsilon\le\frac{1-q}{8}$ we have
$$\sum\limits_{k=0}^\infty \bigl((1+\varepsilon)^{2k+1}-1\bigr)q^k\le\frac{4\varepsilon}{(1-q)^2}.$$
\end{lemma}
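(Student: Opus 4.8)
The plan is to evaluate the series in closed form and then estimate the resulting rational expression. The one preliminary point to settle, and the only place where the hypothesis $\varepsilon\le\frac{1-q}{8}$ is truly needed, is that the geometric ratio $(1+\varepsilon)^2q$ is strictly less than $1$; I would get this from the crude bound $(1+\varepsilon)^2=1+2\varepsilon+\varepsilon^2\le 1+3\varepsilon$ (valid since $0<\varepsilon<1$), which gives
$$(1+\varepsilon)^2q\le q+3\varepsilon\le q+\tfrac38(1-q)=1-\tfrac58(1-q)<1,$$
and at the same time records the lower bound $1-(1+\varepsilon)^2q\ge\tfrac58(1-q)$ that will be used at the end.

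With convergence in hand, I would write $(1+\varepsilon)^{2k+1}=(1+\varepsilon)\bigl((1+\varepsilon)^2\bigr)^k$ and split the sum into two geometric series:
$$\sum_{k=0}^\infty\bigl((1+\varepsilon)^{2k+1}-1\bigr)q^k=\frac{1+\varepsilon}{1-(1+\varepsilon)^2q}-\frac{1}{1-q}.$$
Putting the two terms over a common denominator, the numerator simplifies after cancellation to $\varepsilon(1+q+q\varepsilon)$, so
$$\sum_{k=0}^\infty\bigl((1+\varepsilon)^{2k+1}-1\bigr)q^k=\frac{\varepsilon(1+q+q\varepsilon)}{(1-q)\bigl(1-(1+\varepsilon)^2q\bigr)}.$$

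It remains to bound this. Since $q<1$ and $\varepsilon<\tfrac18$, the numerator satisfies $\varepsilon(1+q+q\varepsilon)<\tfrac{17}{8}\varepsilon$; combined with the lower bound $1-(1+\varepsilon)^2q\ge\tfrac58(1-q)$ obtained above, this gives
$$\sum_{k=0}^\infty\bigl((1+\varepsilon)^{2k+1}-1\bigr)q^k\le\frac{(17/8)\,\varepsilon}{(1-q)\cdot(5/8)(1-q)}=\frac{17\,\varepsilon}{5(1-q)^2}\le\frac{4\varepsilon}{(1-q)^2},$$
using $17/5<4$, which is the claimed inequality.

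I do not expect any genuine obstacle here: the argument is entirely elementary, the only subtlety being to verify convergence (equivalently $(1+\varepsilon)^2q<1$) before manipulating the series, which is precisely what the first step above accomplishes, and the numerical constants $\tfrac{17}{8}$ and $\tfrac58$ leave comfortable room against the target $4$, so no sharp estimates are required.
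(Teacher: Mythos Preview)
Your proof is correct and follows essentially the same approach as the paper: evaluate the two geometric series in closed form, obtain the expression $\dfrac{\varepsilon(1+q+q\varepsilon)}{(1-q)\bigl(1-(1+\varepsilon)^2q\bigr)}$, and bound the factor $1-(1+\varepsilon)^2q$ from below using the hypothesis $\varepsilon\le\frac{1-q}{8}$. The only difference is in the arithmetic: the paper gets the slightly weaker lower bound $1-(1+\varepsilon)^2q\ge\frac{1-q}{2}$ via a more involved polynomial inequality, whereas your route through $(1+\varepsilon)^2\le 1+3\varepsilon$ is cleaner and yields $\frac{5}{8}(1-q)$; both sets of constants comfortably give the final factor $4$.
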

\begin{proof}
First, note that the conditions on $\varepsilon$ and $q$ imply
$$q(1+\varepsilon)^2
\le \frac{81q}{64}-\frac{9q^2}{32}+\frac{q^3}{64}
\le q+\frac{17q}{64}-\frac{17q^2}{64}\le\frac{1+q}{2},$$
whence
$$1-q(1+\varepsilon)^2\ge \frac{1-q}{2}.$$
Using the last inequality, we obtain
\begin{align*}
\sum\limits_{k=0}^\infty \bigl((1+\varepsilon)^{2k+1}-1\bigr)q^k
&=(1+\varepsilon)\sum\limits_{k=0}^\infty \bigl(q(1+\varepsilon)^2 \bigr)^k
-\sum\limits_{k=0}^\infty q^k\\
&=\frac{(1+\varepsilon)}{1-q(1+\varepsilon)^2}-\frac{1}{1-q}\\
&=\frac{\varepsilon+\varepsilon q+\varepsilon^2 q}{(1-q)(1-q(1+\varepsilon)^2)}\\
&\le\frac{4\varepsilon}{(1-q)^2}.
\end{align*}
\end{proof}

\begin{theor}\label{th-random-walk-sphere}
For any $\theta\in (0,\pi/2)$ there exist $n_0=n_0(\theta)$ and $K=K(\theta)$ depending 
only on $\theta$ such that the following holds: Let $n\ge n_0$ and let $W_\theta$ be
the random walk on $\S^{n-1}$ defined above. Then for all $N\ge Kn$ we have 
$$
\P\bigl\{0\mbox{ belongs to }\conv\{W_{\theta}(i):\,i\le N\}\bigr\}\ge 1-\exp(-n).
$$ 
\end{theor}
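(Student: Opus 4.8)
The plan is to mimic the scheme used for processes in $\R^n$ with independent increments (as in Corollary~\ref{geometric growth}), but now the ``increments'' are the Gaussian vectors $Y_{i+1}$ appearing in the constructive description \eqref{def-walk-sphere}, and the triangular matrix relating them to the $W_\theta(i)$'s will itself be \emph{random} — which is exactly why Theorem~\ref{gaussian-escape} was stated with a random $F$ close to a deterministic $\tilde F$. First I would unwind the recursion \eqref{def-walk-sphere}: writing $W_\theta(i)=\beta_i^{-1}(\alpha_i W_\theta(i-1)+Y_i)$ repeatedly, one sees that each $W_\theta(i)$ is a fixed (random-coefficient) linear combination $\sum_{j\le i} h_{ij} Y_j$ of the Gaussian vectors $Y_1,\dots,Y_i$, where the coefficients $h_{ij}$ are built from the $\alpha$'s and $\beta$'s; explicitly $h_{ii}=\beta_i^{-1}$ and $h_{ij}=\beta_i^{-1}\alpha_i\beta_{i-1}^{-1}\alpha_{i-1}\cdots$ telescoping down to index $j$. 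Stacking the $Y_j$'s (suitably rescaled by $\sqrt n$, say) as the rows of an $N\times n$ standard Gaussian matrix $G$, we get $W_\theta(i)=$ the $i$-th row of $FG$ for a lower-triangular random matrix $F=(h_{ij})$ (up to the $\sqrt n$ normalization). Then, exactly as in Corollary~\ref{geometric growth}, $0\in\conv\{W_\theta(i):i\le N\}$ unless $\rank(FG)<n$ (probability zero) or there is $y\in\S^{n-1}$ with $FGy\in\R_+^N$.

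The second step is to identify the deterministic comparison matrix $\tilde F$ and verify the two hypotheses of Theorem~\ref{gaussian-escape}, namely a dimension-free bound on the condition number of $\tilde F$ and a high-probability bound $\|F-\tilde F\|\le \eta\|\tilde F\|$. By Lemma~\ref{concentration-alpha-beta} and \eqref{beta1 concentration}, with probability at least $1-2N\exp(-\delta_\theta^2\varepsilon^2 n)$ we have $\beta_1^{-1}\approx 1/\sqrt n$, $\beta_i^{-1}\approx \sin\theta/\sqrt n$ and $\alpha_i\approx\sqrt n\cot\theta$ for all $i\le N$, all up to multiplicative factors in $[1-\varepsilon,1+\varepsilon]$. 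On this event, $\beta_i^{-1}\alpha_i\approx\cos\theta$, so the off-diagonal entry $h_{ij}$ is within a $(1\pm\varepsilon)$-type factor of $\cos^{i-j}\theta\cdot\beta_j^{-1}$, i.e.\ of the corresponding entry of the deterministic matrix $\tilde F$ whose $(i,j)$ entry is $(\cos\theta)^{i-j}$ times a constant $\sqrt n$-normalization on the diagonal. The key point is that this $\tilde F$ has a bounded condition number depending only on $\theta$: $\tilde F$ is lower-triangular-Toeplitz-like with geometric decay at rate $\cos\theta<1$, so $\|\tilde F\|\le \mathrm{const}(\theta)$, and its inverse is lower bidiagonal (entries $1$ and $-\cos\theta$), so $\|\tilde F^{-1}\|\le 1+\cos\theta$; hence $\|\tilde F\|\|\tilde F^{-1}\|\le\gamma^{-1}$ for some $\gamma=\gamma(\theta)$. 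To control $\|F-\tilde F\|$, I would use the Schur test / row-and-column-sum bound: the $(i,j)$ entry of $F-\tilde F$ is, on the good event, at most $\bigl((1+\varepsilon)^{2(i-j)+1}-1\bigr)(\cos\theta)^{i-j}$ times the normalization, and summing such a bound over a row (or column) is precisely what Lemma~\ref{calculation} controls, giving $\|F-\tilde F\|\le \frac{4\varepsilon}{(1-\cos^2\theta)^2}\|\tilde F\|$ (up to harmless constants). Choosing $\varepsilon=\varepsilon(\theta)$ small enough makes this $\le\eta\|\tilde F\|$ with the $\eta=\eta(\gamma)$ from Theorem~\ref{gaussian-escape}.

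Finally, I would put the pieces together: fix $\gamma=\gamma(\theta)$ from the condition-number bound, obtain $L=L(\gamma)$, $\kappa=\kappa(\gamma)$, $\eta=\eta(\gamma)$ from Theorem~\ref{gaussian-escape}, then pick $\varepsilon=\varepsilon(\theta)$ so that the bound from Lemma~\ref{calculation} is below $\eta$, and set $K(\theta):=L$. For $N\ge K n$, Theorem~\ref{gaussian-escape} applied with this $F$ and $\tilde F$ gives
$$
\P\bigl\{\exists y\in\S^{n-1},\ FGy\in\R_+^N\bigr\}\le 5.5\exp(-\kappa N)+\P\bigl\{\|F-\tilde F\|>\eta\|\tilde F\|\bigr\},
$$
and the second term is at most $2N\exp(-\delta_\theta^2\varepsilon^2 n)$ by Lemma~\ref{concentration-alpha-beta}/\eqref{beta1 concentration}; both terms are $\le\frac12\exp(-n)$ once $n\ge n_0(\theta)$ (absorbing the polynomial factor $N\le$ const$\cdot n$ and adjusting $n_0$). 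Together with the probability-zero event $\{\rank(FG)<n\}$, this yields the claimed bound $1-\exp(-n)$.

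The main obstacle I anticipate is the operator-norm estimate $\|F-\tilde F\|\le\eta\|\tilde F\|$: one must simultaneously control \emph{all} $N$ of the $\alpha_i,\beta_i$ on a single high-probability event (a union bound over $i\le N$, costing a factor $N$, which is fine since $N=O_\theta(n)$), and then convert the resulting entrywise estimates into an operator-norm bound without losing a dimensional factor — this is where the geometric decay $\cos\theta<1$ and the row-sum/Schur-test bound powered by Lemma~\ref{calculation} are essential, since a naive Frobenius-norm bound would blow up with $N$. A secondary subtlety is bookkeeping the $\sqrt n$ normalizations so that $G$ really is the standard Gaussian matrix and the diagonal of $\tilde F$ is $\Theta(1)$-conditioned; and one should remember, as the remark after Theorem~\ref{nongauss matrix th} stresses, that $F$ here is \emph{not} independent of $G$, which is fine because Theorem~\ref{gaussian-escape} does not assume independence.
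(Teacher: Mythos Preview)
Your approach is essentially the paper's: represent $W_\theta(i)$ as the rows of $FG$ for a random lower-triangular $F$, compare $F$ to the deterministic Toeplitz-like matrix $\tilde F$ with geometric decay $\cos\theta$, bound the condition number of $\tilde F$ via its bidiagonal inverse, control $\|F-\tilde F\|$ entrywise using Lemma~\ref{concentration-alpha-beta} and then in operator norm via Lemma~\ref{calculation}, and finally invoke Theorem~\ref{gaussian-escape}. All of that is correct.

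There is, however, a genuine gap at the very end. You assert that the two error terms
\[
5.5\exp(-\kappa N)\quad\text{and}\quad O(N)\exp(-\delta_\theta^2\varepsilon^2 n)
\]
are each at most $\tfrac12\exp(-n)$ once $n\ge n_0(\theta)$. This is false in general: with $K=L$ and $\kappa=2L^{-2}/9$ you only get $\kappa N\ge (2/9L)\,n$, and since $L=64/\gamma^2\ge 64$ the exponent is far below $n$. More fundamentally, the second term has exponent $\delta_\theta^2\varepsilon^2 n$ with $\delta_\theta,\varepsilon<1$ forced by Lemma~\ref{calculation} (which requires $\varepsilon\le(1-\cos\theta)/8$); no choice of $K$ or $n_0$ can push this exponent up to $n$. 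What your argument actually yields is
\[
\P\bigl\{0\notin\conv\{W_\theta(i):i\le \tilde N\}\bigr\}\le \exp(-\mu n)
\]
for $\tilde N=\lceil Ln\rceil$ and some $\mu=\mu(\theta)$ that may be much smaller than $1$.

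The paper closes this gap with a short boosting step you omitted: by rotational invariance the probability that $0\notin\conv\{W_\theta(i):i\le\tilde N\}$ does not depend on the starting point, and by the Markov property the events ``$0\notin\conv$ of the $j$-th block of $\tilde N$ consecutive steps'' are conditionally independent. Hence for $N\ge\lceil\mu^{-1}\rceil\tilde N$,
\[
\P\bigl\{0\notin\conv\{W_\theta(i):i\le N\}\bigr\}\le \bigl(\exp(-\mu n)\bigr)^{\lceil\mu^{-1}\rceil}\le\exp(-n),
\]
so one takes $K(\theta)=\lceil\mu^{-1}\rceil L$ rather than $K(\theta)=L$.
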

\begin{proof}
Fix an angle $\theta\in(0,\pi/2)$.
Let $\gamma:=\frac{\sin\theta\,(1-\cos\theta)}{1+\cos\theta}$
and let $\eta, L$ and  $\kappa$ be as in Theorem~\ref{gaussian-escape}. Define
$\varepsilon:=\eta\sin\theta\,(1-\cos\theta)^2/4$ and let
$n_0$ be the smallest integer such that for all $n\ge n_0$ we have
$$5.5\exp ( -\kappa \lceil Ln\rceil )+4\lceil Ln\rceil\exp(-{\delta_{\theta}}^2 \varepsilon^2n)\le \exp(-\mu\, n),$$
where $\mu=\frac{1}{2}\min\bigl(\kappa,{\delta_{\theta}}^2 \varepsilon^2)$
and $\delta_{\theta}$ is taken from Lemma~\ref{concentration-alpha-beta}.

Fix $n\ge n_0$. First, we show that $\tilde N:=\lceil Ln\rceil$ steps is sufficient to get the origin
in the convex hull of $W_\theta(i)$ ($i\le\tilde N$) with probability $1-\exp(-\mu\, n)$.
This shall be done by using the representation \eqref{def-walk-sphere} for the walk $W_\theta$
and by applying Theorem~\ref{gaussian-escape}. Then we will augment the probability
estimate to $1-\exp(-n)$ by increasing the number of steps.

Let $G$ be the standard $\tilde N\times n$ Gaussian matrix
with rows $Y_i$ ($i\le \tilde N$).
We shall construct a random lower-triangular $\tilde N\times \tilde N$ matrix $F$ such that the $i$-th row of $FG$ is $W_\theta(i)$.
Define $F:=(f_{ij})$ with
$$
f_{ij}:=\frac{\prod_{k=j+1}^{i}\alpha_k}{\prod_{k=j}^i \beta_k}\; \text{ for } j< i\le\tilde N
\quad\text{ and }\quad f_{ii}:= \frac{1}{\beta_i}\; \text{ for } i\le\tilde N,
$$
where $\alpha_k$ and $\beta_k$ are given by \eqref{def-beta-alpha}.
Since $FG=(W_\theta(1),W_\theta(2),\dots,W_\theta(\tilde N))^t$, the origin
does not belong to $\conv\{W_\theta(i):\,i\le \tilde N\}$ only if
there exists $y\in\S^{n-1}$ such that $FGy\in \R_+^{\tilde N}$. Now define 
$\tilde F$ as the $\tilde N\times \tilde N$ lower triangular matrix whose entries are given by 
$$
\tilde f_{i1}=\frac{\left(\cos\theta\right)^{i-1}}{\sqrt{n}}\;\;\text{for any } i\le \tilde N
\;
\text{ and }\; \tilde f_{ij}:=\sin\theta\,\frac{\left(\cos\theta\right)^{i-j}}{\sqrt{n}}\;\;\text{for } 2\le j\le i.
$$
It is not difficult to see that
\begin{equation}\label{norm-F-tilde}
\frac{\sin\theta}{\sqrt{n}}\le\|\tilde F\|\le \frac{1}{(1-\cos \theta)\sqrt{n}}.
\end{equation}
Further,
let $Q$ be the matrix obtained from $\tilde F$ by multiplying the first column of $\tilde F$ by $\sin\theta$
and leaving the other columns unchanged. Then, clearly, $s_{\min}(Q)\le s_{\min}(\tilde F)$ implying $\|\tilde F^{-1}\|\le\|Q^{-1}\|$.
On the other hand, the inverse of $Q$ is a lower bidiagonal matrix with $\frac{\sqrt{n}}{\sin\theta}$
on the main diagonal and $-\cos\theta\frac{\sqrt{n}}{\sin\theta}$ on the diagonal below.
Hence, $\|\tilde F^{-1}\|\le\|Q^{-1}\|\le (1+\cos\theta)\frac{\sqrt{n}}{\sin\theta}$,
and the condition number of $\tilde F$ satisfies
$$\|\tilde F\|\cdot\|\tilde F^{-1}\|\le \frac{1+\cos\theta}{\sin\theta\,(1-\cos\theta)}=\gamma^{-1}.$$
Applying Theorem~\ref{gaussian-escape}, we get
\begin{equation*}\label{eq-sphere-part1}
\P\bigl\{\exists y\in \mathbb{S}^{n-1}, \; FGy\in \R^{\tilde N}_{+}\bigr\}
\le    5.5\exp ( -\kappa \tilde N )+ \P\bigl\{ \bigl\Vert F-\tilde F\bigr\Vert > \eta\Vert \tilde F\Vert\bigr\}.
\end{equation*}
It remains to bound the probability $\P\bigl\{ \bigl\Vert F-\tilde F\bigr\Vert > \eta\Vert \tilde F\Vert\bigr\}$.
In view of Lemma~\ref{concentration-alpha-beta} and \eqref{beta1 concentration},
with probability at least $1-4\tilde N\exp(-{\delta_{\theta}}^2 \varepsilon^2n)$ we have
$$
\bigl| f_{ij}-\tilde f_{ij}\bigr| \le \left( (1+\varepsilon)^{2(i-j)+1}-1\right)\tilde f_{ij}\;\;\mbox{for any }j\le i. 
$$
This, together with Lemma~\ref{calculation} 
and \eqref{norm-F-tilde}, 
implies that   
$$
\Vert F-\tilde F\Vert \le \frac{1}{\sqrt{n}} \sum\limits_{k=0}^\infty \bigl((1+\varepsilon)^{2k+1}-1\bigr)(\cos\theta)^k
\le \frac{4\varepsilon}{(1-\cos\theta)^2\sqrt{n}}\le \eta \Vert \tilde F\Vert
$$
with probability at least $1-4\tilde N\exp(-{\delta_{\theta}}^2 \varepsilon^2n)$.
Hence, by the restriction on $n_0$,
$$\P\bigl\{\exists y\in \mathbb{S}^{n-1}, \; FGy\in \R^{\tilde N}_{+}\bigr\}
\le 5.5\exp ( -\kappa \tilde N )+4\tilde N\exp(-{\delta_{\theta}}^2 \varepsilon^2n)\le \exp(-\mu n),$$
where $\mu=\frac{1}{2}\min\bigl(\kappa,{\delta_{\theta}}^2 \varepsilon^2)$.
Finally, if $N\ge \lceil\mu^{-1}\rceil\tilde N$ then the above estimate implies
\begin{align*}
\P&\bigl\{0\mbox{ does not belong to }\conv\{W_{\theta}(i):\,i\le N\}\bigr\}\\
&\le \P\bigl\{0\mbox{ does not belong to }\conv\{W_{\theta}(i):\,i\le \tilde N\}\bigr\}^{\lceil\mu^{-1}\rceil}\\
&\le \exp(-n).
\end{align*}
\end{proof}

\section{Minimax of the $n$-dimensional Brownian motion}\label{section-minimax}

In this section we will prove Theorem~B which, as noted in the introduction, is equivalent to estimate \eqref{eq-minmax-intro}. 
 
Let us give an informal description of the proof. We construct a random unit vector 
$\bar{v}$ in $\R^n$ such that with probability close to one 
\begin{equation}\label{eq-intro-goal}
\langle \bar{v}, \B_n(t)\rangle >0 \quad \text{for any }  t\in [1,2^{cn}].
\end{equation}
The construction procedure shall be divided into a series of steps. At the initial step, we produce 
a random vector $\bar{v}_0$ such that
$$
\langle \bar{v}_0, \B_n(2^i)\rangle >0 \quad \text{for any } i=0,1,\dots, cn.
$$
(In fact, $\bar{v}_0$ will satisfy a stronger condition). At a step $k$, $k\ge 1$, 
we update the vector $\bar{v}_{k-1}$ by adding a small perturbation in such a way that 
$$
\langle \bar{v}_{k}, \B_n(2^{j2^{-k}})\rangle >0  \quad \text{for any } j=0,1,\dots, 2^kcn.
$$
(Again $\bar{v}_k$ will in fact satisfy a stronger condition). Finally, using some standard properties of the Brownian 
bridge, we verify that 
$\bar{v}:= \bar{v}_{\log_2\ln n}$ satisfies \eqref{eq-intro-goal} with a large probability.

\subsection{\bf Auxiliary facts}
\label{sec:1}

In this subsection we introduce several auxiliary results that will be used within the proof.
The proof of the next lemma is straightforward, so we omit it.
\begin{lemma}\label{brownian-segment-split}
Let $\B_n(t)$ ($0\le t<\infty$) be the standard Brownian motion in $\R^n$ and let $0<a<b$.
Fix any $s\in(a,b)$ and set
$$
w(s):= \frac{b-s}{b-a}\B_n(a)+ \frac{s-a}{b-a}\B_n(b);\;\;u(s):= \B_n(s)-w(s).
$$ 
Then the process $u(s)$, $s\in(a,b)$, is a Brownian bridge, and
\begin{enumerate}
\item $u(s)$ is a centered Gaussian vector with the covariance matrix
$$\frac{(b-s)(s-a)}{b-a}\Id_n.$$
\item The random vector $u(s)$ is independent from the process $\B_n(t)$ indexed over $t\in(0,a]\cup[b,\infty)$.
\end{enumerate}
\end{lemma}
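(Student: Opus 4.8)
The plan is to reduce to dimension one and then to an explicit change of variables into independent Gaussian increments, after which all three assertions become short computations. Since the coordinates of $\B_n$ are independent standard one-dimensional Brownian motions and the affine map $\B_n(\cdot)\mapsto u(\cdot)$ acts on each coordinate separately, it suffices to prove the statement for $n=1$; I write $B$ for a standard one-dimensional Brownian motion.

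The key step is to rewrite $u(s)$ using the independent increments $X_1:=B(a)$, $X_2:=B(s)-B(a)$, $X_3:=B(b)-B(s)$, which are jointly independent centered Gaussians with variances $a$, $s-a$, $b-s$. Substituting $B(a)=X_1$, $B(s)=X_1+X_2$, $B(b)=X_1+X_2+X_3$ into the definition of $u(s)$ and collecting terms, the coefficient of $X_1$ is $1-\frac{b-s}{b-a}-\frac{s-a}{b-a}=0$, so that
$$u(s)=\frac{b-s}{b-a}\,X_2-\frac{s-a}{b-a}\,X_3.$$
Part~(1) follows at once: $u(s)$ is a linear combination of centered Gaussians, hence a centered Gaussian vector, with variance $\bigl(\tfrac{b-s}{b-a}\bigr)^2(s-a)+\bigl(\tfrac{s-a}{b-a}\bigr)^2(b-s)=\tfrac{(b-s)(s-a)}{b-a}$, which in dimension $n$ gives exactly the claimed multiple of $\Id_n$.

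For part~(2) I would use joint Gaussianity of the family consisting of $u(s)$ and $\{B(t):t\in(0,a]\cup[b,\infty)\}$: independence of $u(s)$ from this family is then equivalent to the vanishing of $\cov(u(s),B(t))$ for every $t\le a$ and every $t\ge b$. For $t\le a$ the variable $B(t)$ is a function of the increments of $B$ on $(0,a]$, which are independent of $(X_2,X_3)$, so the covariance is zero. For $t\ge b$ write $B(t)=B(b)+(B(t)-B(b))$; the increment $B(t)-B(b)$ is independent of $(X_2,X_3)$, while $\cov(u(s),B(b))=\cov\bigl(\tfrac{b-s}{b-a}X_2-\tfrac{s-a}{b-a}X_3,\,X_1+X_2+X_3\bigr)=\tfrac{b-s}{b-a}(s-a)-\tfrac{s-a}{b-a}(b-s)=0$, so the covariance vanishes there as well.

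Finally, for the claim that $(u(s))_{s\in(a,b)}$ is a Brownian bridge I would compute the covariance function directly from $\cov(B(p),B(q))=\min(p,q)$, obtaining $\cov(u(s),u(s'))=\tfrac{(s-a)(b-s')}{b-a}$ for $a<s\le s'<b$; after the affine time change $s=a+r(b-a)$ this equals $(b-a)\bigl(\min(r,r')-rr'\bigr)$, the covariance of $\sqrt{b-a}$ times a standard Brownian bridge on $[0,1]$, and since the process is centered Gaussian this determines its law. I do not anticipate a real obstacle anywhere in the argument; the only point deserving a word of care is the passage from ``uncorrelated'' to ``independent'' in part~(2), which is valid precisely because $B$ is a Gaussian process, so $u(s)$ and $\B_n$ restricted to $(0,a]\cup[b,\infty)$ form a jointly Gaussian family.
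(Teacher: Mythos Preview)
Your argument is correct and complete. The paper itself omits the proof entirely, stating only that it is straightforward; what you have written is exactly the standard verification one would supply, and every computation checks out (the cancellation of the $X_1$ coefficient, the variance in part~(1), the covariance calculations for part~(2), and the bridge covariance $\tfrac{(s-a)(b-s')}{b-a}$).
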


\begin{lemma}\label{normal-vector-lem}
Let $d,m \in\mathbb{N}$ be such that $m\leq d/2$. Let $X_1, X_2,\ldots, X_m$ be independent standard Gaussian 
vectors in $\mathbb{R}^d$. Then for any non-random vector $b\in\Sphere^{m-1}$, there exists a random unit vector $\bar{u}_b\in\R^d$ 
such that 
$$
\P \Big\{ \langle \bar{u}_b,X_i\rangle \ge c_{\ref{normal-vector-lem}}\sqrt{d}
\vert b_i\vert \text{ for all } i=1,2,\dots ,m\Big\}\ge 1-\exp(-c_{\ref{normal-vector-lem}}d),
$$
where $c_{\ref{normal-vector-lem}}$ is a universal constant and $b_i$'s are the coordinates of $b$.
Moreover, $\bar u_b$ can be defined as a Borel function of $X_i$'s and $b$.
\end{lemma}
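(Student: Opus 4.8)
The plan is to realize $\bar u_b$ as the normalization of the minimum-norm solution of the linear system $\langle v,X_i\rangle=|b_i|$, $i=1,\dots,m$. Let $M$ be the $m\times d$ matrix with rows $X_1,\dots,X_m$, so that its transpose $M^{T}$ is a $d\times m$ standard Gaussian matrix, and write $|b|:=(|b_1|,\dots,|b_m|)^{T}\in\R^m$. On the event $\{\det(MM^{T})\neq0\}$ set
$$
v:=M^{T}(MM^{T})^{-1}|b|,
$$
so that $Mv=|b|$, i.e.\ $\langle v,X_i\rangle=|b_i|$ for every $i\le m$; on the complementary (null) event put $v:=0$. Finally let $\bar u_b:=v/\|v\|$ if $v\neq0$ and let $\bar u_b$ be a fixed unit vector otherwise. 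Since $v$ is a rational function of the entries of $M$ and $b$ on the open set $\{\det(MM^{T})\neq0\}$, and $w\mapsto w/\|w\|$ is continuous off the origin, $\bar u_b$ is a Borel function of $X_1,\dots,X_m$ and $b$.

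It remains to show that, with probability at least $1-\exp(-c_{\ref{normal-vector-lem}}d)$, the vector $v$ is nonzero and $\|v\|\le 1/(c_{\ref{normal-vector-lem}}\sqrt d)$; granted this, on that event $\langle\bar u_b,X_i\rangle=\langle v,X_i\rangle/\|v\|=|b_i|/\|v\|\ge c_{\ref{normal-vector-lem}}\sqrt d\,|b_i|$ for all $i$, which is the assertion. For the norm bound, note that on $\{\det(MM^{T})\neq0\}$,
$$
\|v\|^{2}=\langle(MM^{T})^{-1}|b|,|b|\rangle\le\bigl\|(MM^{T})^{-1}\bigr\|\,\|b\|^{2}=s_{\min}(M^{T})^{-2},
$$
where I used $\|b\|=1$, the fact that $\|(MM^{T})^{-1}\|=1/\lambda_{\min}(MM^{T})$, and the bookkeeping identity $s_{\min}(M^{T})^{2}=\min_{\|y\|=1}\|M^{T}y\|^{2}=\lambda_{\min}(MM^{T})$ for the tall matrix $M^{T}$. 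Applying \eqref{gaussian matrix ineq} to $M^{T}$ (with ``$N$''$=d$, ``$n$''$=m$), taking $t=\tfrac12(1-\tfrac1{\sqrt2})\sqrt d$, and using $m\le d/2$ so that $\sqrt d-\sqrt m\ge(1-\tfrac1{\sqrt2})\sqrt d$, we obtain a universal constant $c_1\in(0,1)$ with
$$
\P\bigl\{s_{\min}(M^{T})\ge c_1\sqrt d\bigr\}\ge 1-2\exp(-c_1^{2}d/2).
$$
On this event $\det(MM^{T})\neq0$ and, since $|b|\neq0$, $v\neq0$ with $\|v\|\le 1/(c_1\sqrt d)$; choosing $c_{\ref{normal-vector-lem}}$ a small enough universal constant (for instance $c_{\ref{normal-vector-lem}}=c_1^{2}/4$) both absorbs the factor $2$ in the probability bound and guarantees $1/\|v\|\ge c_{\ref{normal-vector-lem}}\sqrt d$, which completes the proof for $d$ larger than a suitable absolute constant $d_0$. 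For the finitely many remaining values of $d$ the claim follows by shrinking $c_{\ref{normal-vector-lem}}$ further, using that $s_{\min}(M^{T})>0$ almost surely, so $\P\{s_{\min}(M^{T})\ge\varepsilon\sqrt d\}\to1$ as $\varepsilon\to0^{+}$.

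I do not expect a serious obstacle here: the construction is explicit and the only genuine input is the quantitative invertibility of a rectangular Gaussian matrix of aspect ratio bounded away from $1$, namely \eqref{gaussian matrix ineq}, together with the elementary estimate $\|M^{T}(MM^{T})^{-1}|b|\|\le s_{\min}(M^{T})^{-1}$. The points needing a little care are the measurability of $\bar u_b$ and the identification of $s_{\min}(M^{T})^{2}$ with $\lambda_{\min}(MM^{T})$, both of which are routine.
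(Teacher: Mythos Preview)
Your proof is correct and is essentially the same as the paper's: the paper defines $\bar u_b$ geometrically as the unit normal to the affine span of $\{|b_i|^{-1}X_i\}$ inside $\spn\{X_i\}$, which is exactly your normalized minimum-norm solution $v=M^{T}(MM^{T})^{-1}|b|$, and then bounds $\text{dist}(0,E)=1/\|v\|$ from below by $s_{\min}(M^{T})$ via \eqref{gaussian matrix ineq}. The only cosmetic difference is the language (affine geometry versus pseudoinverse), and your handling of small $d$ is a harmless add-on.
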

\begin{proof}
Without loss of generality, we can assume that $b_i\neq 0$ for any $i\le m$
and that $X_i$'s are linearly independent on the entire probability space. 
Denote by $E$ the random affine subspace of $\R^d$ spanned by $\{\vert b_i\vert^{-1}X_i\}_{i\le m}$.
Define $\bar{u}_b$ as the unique unit vector in $\spn\{X_1,\dots,X_m\}$ such that $\bar{u}_b$ is orthogonal to 
$E$ and for any $i\le m$ we have
$$\langle \bar{u}_b, \vert b_i\vert^{-1}X_i \rangle = \dist(0, E),$$
where $\dist(0, E)$ stands for the distance from the origin to $E$. Then we have
\begin{equation}\label{eq-dist-E}
\sum_{i\le m} \langle \bar{u}_b, X_i\rangle^2= \sum_{i\le m} \bigl\langle \bar{u}_b,
\frac{X_i}{\vert b_i\vert}\bigr\rangle^2 \vert b_i\vert^2 
= \sum_{i\le m} \dist(0, E)^2\cdot \vert b_i\vert^2= \dist(0, E)^2.
\end{equation}
Let $G$ be the $d\times m$ standard Gaussian matrix with columns $X_i$, $i=1,2,\ldots, m$. 
Using the definition of $\bar{u}_b$ together with \eqref{eq-dist-E}, we obtain for any $\tau>0$:
\begin{align*}
\P \Big\{ \langle \bar{u}_b,X_i\rangle \ge \tau\sqrt{d} \vert b_i\vert \text{ for all } i=1,2,\dots ,m\Big\}
&= \P \Big\{ \dist(0,E)\ge \tau\sqrt{d}\Big\}\\
&= \P \Big\{ \sqrt{\sum_{i\le m} \langle \bar{u}_b, X_i\rangle^2}\ge \tau\sqrt{d}\Big\}\\
&=\P\Big\{\Vert G^t\bar{u}_b\Vert\ge \tau\sqrt{d}\Big\}\\
&\ge \P\Big\{ s_{\min}(G)\ge \tau\sqrt{d}\Big\},
\end{align*}
where the last inequality holds since $\bar{u}_b\in {\rm Im}\, G$. 
The proof is finished by choosing a sufficiently small $c_{\ref{normal-vector-lem}}:=\tau$
and applying \eqref{gaussian matrix ineq}.
\end{proof}

\begin{lemma}\label{norm of noncentered gaussian}
Let $q\in\N$ and $r\in\R$ with $e\le r\le \sqrt{\ln q}$, and let
$g_1, g_2,\ldots, g_q$ be independent standard Gaussian variables. 
Define a random vector $b=(b_1,b_2,\dots,b_q)\in\R^q$
by $b_i:=\max(0, g_i - r)$, $i\le q$. Then 
$$
\P\Big\{ \Vert b\Vert \le 4\sqrt{q}\exp(-r^2/8)\Big\} \ge 1-\exp(-2\sqrt{q}).
$$ 
\end{lemma}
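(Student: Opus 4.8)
The plan is to bound $\|b\|^2 = \sum_{i\le q} \max(0, g_i-r)^2$ by controlling its expectation and then applying a concentration argument. First I would compute $\Exp \max(0,g-r)^2$ for a standard Gaussian $g$ and the given range $e \le r \le \sqrt{\ln q}$. Integrating by parts, $\Exp \max(0,g-r)^2 = \int_0^\infty 2s\, \P\{g - r \ge s\}\, ds = \int_r^\infty 2(u-r)\, \P\{g\ge u\}\, du$, and using the tail bound \eqref{feller ineq}, $\P\{g\ge u\} < \frac{1}{\sqrt{2\pi}\,u}\exp(-u^2/2)$, one gets a bound of the form $\Exp\max(0,g-r)^2 \le C\, r^{-2}\exp(-r^2/2)$ for some absolute constant, hence (crudely, absorbing the polynomial factor and using $r\ge e$) at most $\exp(-r^2/2)$ up to constants. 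So $\Exp\|b\|^2 \le q\,\exp(-r^2/2)$ up to an absolute constant, and the target radius $4\sqrt q\,\exp(-r^2/8)$ is generously larger than $\sqrt{\Exp\|b\|^2}$; the slack between $\exp(-r^2/2)$ and $\exp(-r^2/4)$ (the square of the target) is what powers the deviation estimate.

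The main step is the deviation bound. I would write $\|b\|^2 = \sum_{i\le q} Z_i$ with $Z_i := \max(0,g_i-r)^2$ i.i.d., and show $\P\{\sum Z_i \ge 16 q\, \exp(-r^2/4)\} \le \exp(-2\sqrt q)$. Since the $Z_i$ are nonnegative, the cleanest route is a Chernoff/Laplace-transform bound: choose a parameter $\lambda>0$ and estimate $\Exp \exp(\lambda Z_i)$. Here one uses that $Z_i$ is typically zero: $\P\{Z_i > 0\} = \P\{g_i > r\} \le \exp(-r^2/2)$, which is tiny (at least $1/q$, since $r\le\sqrt{\ln q}$, but we want it small relative to the exponent $\sqrt q$). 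On the event $\{g_i > r\}$, $Z_i = (g_i - r)^2$ has a light (Gaussian-type) tail, so $\Exp\exp(\lambda Z_i)$ is finite for $\lambda < 1/2$ and can be bounded by $1 + C\exp(-r^2/2)\cdot(\text{factor depending on }\lambda)$. Taking $\lambda$ a suitable constant (say $\lambda = 1/4$), $\Exp\exp(\lambda Z_i) \le 1 + C'\exp(-r^2/2) \le \exp(C'\exp(-r^2/2))$, so $\Exp\exp(\lambda\|b\|^2) \le \exp(C' q\exp(-r^2/2))$. Then Markov gives $\P\{\|b\|^2 \ge a\} \le \exp(C' q\exp(-r^2/2) - \lambda a)$; plugging $a = 16 q\exp(-r^2/4)$ and using that $q\exp(-r^2/4)$ dominates $q\exp(-r^2/2)$ by a factor $\exp(r^2/4)\ge \exp(e^2/4)$, the linear term wins and one is left with something like $\exp(-c\, q\exp(-r^2/4))$. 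Finally, since $r \le \sqrt{\ln q}$ we have $\exp(-r^2/4) \ge q^{-1/4}$, so $q\exp(-r^2/4) \ge q^{3/4} \ge \sqrt q$ (for $q$ large; small $q$ handled separately or by adjusting constants), giving the claimed $\exp(-2\sqrt q)$ after tuning the numerical constants $4$ and $2$.

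The step I expect to be the main obstacle is getting the numerical constants to line up exactly as stated ($4\sqrt q\,\exp(-r^2/8)$ and $\exp(-2\sqrt q)$) rather than with unspecified absolute constants; the exponent $r^2/8$ in the radius (versus the natural scale $\exp(-r^2/4)$ for the variance) suggests the authors deliberately left a large cushion precisely so the deviation probability comes out cleanly, so one must be a bit careful but not clever. An alternative to the Laplace-transform computation, which may be what the authors do, is a truncation/union-bound argument: the number of indices $i$ with $g_i > r$ is Binomial$(q, p)$ with $p = \P\{g>r\}\le\exp(-r^2/2)$, which is at most, say, $\sqrt q\,\exp(-r^2/2) \cdot (\text{something})$ with probability $\ge 1 - \exp(-2\sqrt q)$ by a Chernoff bound for the Binomial; and conditionally on that count, each surviving $(g_i-r)^2$ is a shifted-Gaussian-square that is $O(r^2 + \log(\cdot))$ with high probability — but this route is messier because of the conditioning, so I would favor the direct exponential-moment computation. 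Either way, the combinatorial heart is the same: $\|b\|$ is a sum of many terms that are zero except on a rare event, and on that rare event they are not too big, so both the mean and the fluctuations are controlled by $q\exp(-r^2/2)$, comfortably below the stated threshold.
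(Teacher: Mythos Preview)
Your approach is correct and is essentially the paper's: bound $\E e^{\lambda\|b\|^2}$ by $\exp(Cq e^{-r^2/2})$, apply Markov, and use $r\le\sqrt{\ln q}$ to convert $q e^{-r^2/4}\ge q^{3/4}$ into the claimed $\exp(-2\sqrt q)$. The only notable difference is the choice of $\lambda$: the paper takes the $r$-dependent value $\lambda=(2+r^2/\ln r)^{-1}$ (so that $\tfrac{1}{2\lambda}-1=\tfrac{r^2}{2\ln r}$), which lets them drop the shift $r$ and use the crude bound $\P\{e^{\lambda b_1^2}\ge\tau\}\le\P\{g\ge\sqrt{\ln\tau/\lambda}\}\le\tau^{-1/(2\lambda)}$ while still recovering $\E e^{\lambda b_1^2}\le 1+2e^{-r^2/2}$; with your fixed $\lambda=1/4$ you must keep the $r$ in the Gaussian tail exponent (i.e., bound $\P\{g\ge r+2\sqrt{\ln\tau}\}\le e^{-r^2/2}\tau^{-2}$), but then the integral is equally clean and the constants line up without further tuning.
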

\begin{proof}
Let $\lambda \in (0,1/2)$. We have 
$$
\E e^{\lambda \Vert b\Vert^2} = \prod_{i=1}^q \E e^{\lambda {b_i}^2}
= \left( 1+\int_{1}^\infty \P\{ e^{\lambda {b_1}^2}\ge \tau\} d\tau\right)^q.
$$
Next, using \eqref{feller ineq}, we get
\begin{align*}
\int_{1}^\infty \P\{ e^{\lambda b_1^2}\ge \tau\} d\tau 
&\le (r-1)\P\{g_1>r\} + \int_{r}^{\infty} \P\{e^{\lambda b_1^2}\ge \tau\} d\tau\\
&\le e^{-r^2/2} +  \int_{r}^{\infty} \P\Bigl\{g_1\ge \sqrt{\frac{\ln\tau}{\lambda}}\Bigr\} d\tau\\
&\le e^{-r^2/2}+ \int_{r}^{\infty} \tau^{-\frac{1}{2\lambda}}d\tau\\
&=e^{-r^2/2}+ \frac{r^{1-\frac{1}{2\lambda}}}{\frac{1}{2\lambda}-1}.
\end{align*}
Now, take $\lambda=\bigl(2+\frac{r^2}{\ln r}\bigr)^{-1}$ so that $\frac{1}{2\lambda} -1= \frac{r^2}{2\ln r}$. 
After replacing $\lambda$ with its value, we deduce that 
\begin{equation}\label{eq-laplace}
\E e^{\lambda \Vert b\Vert^2} \le \bigl( 1+2e^{-r^2/2}\bigr)^q\le \exp(2qe^{-r^2/2}).
\end{equation}
Using Markov's inequality together with \eqref{eq-laplace}, we obtain
$$
\P\{\lambda \Vert b\Vert^2\ge 4q e^{-r^2/2}\} \le \exp(-2qe^{-r^2/2})\le \exp(-2\sqrt{q}),
$$
where the last inequality holds since $r\le \sqrt{\ln q}$. To finish the proof, it remains to note that
$$
\frac{4q e^{-r^2/2}}{\lambda} \le 8qr^2e^{-r^2/2}\le 16q e^{-r^2/4}.
$$
\end{proof}

\subsection{\bf Proof of Theorem~B}\label{the proof section}

Throughout this part, we assume that $c>0$ and $n_0\in\N$ are appropriately chosen constants (with $c$ sufficiently
small and $n_0$ sufficiently large) and $n\ge n_0$ is fixed. The admissible values for
$c$ and $n_0$ can be recovered
from the proof, however, we prefer to avoid these technical details.
Further, in order not to overload the presentation,
from now on we treat certain real-valued parameters are integers.
In particular, this concerns the product $cn$,
as well as several other quantities depending on $n$ (we will point them out later).
To prove relation \eqref{eq-minmax-intro}, we will construct a random unit vector $\bar v\in\R^n$ such that
\begin{equation}\label{condition on barn}
\langle \bar v,\B_n(t)\rangle > 0\;\quad \text{for any } t\in[1,2^{cn}]
\end{equation}
with probability close to one. 

Let $N:=cn$ and define
$$a_0:=0 \text{ \ and \ }a_i:=2^{i-1},\;\;i=1,2,\dots,N+1.$$
The starting point of the proof is to define a random vector $\bar v_0$ such that  $\langle \bar v_0,\B_n(a_i)\rangle$ is 
large for all $i\leq N+1$.
For this, we will use Lemma~\ref{normal-vector-lem} taking all coordinates of the vector $b$ equal.
It will be more convenient to state the next lemma (which is a direct consequence of Lemma~\ref{normal-vector-lem})
with generic parameters $m$ and $d$ instead of $N$, $n$.

\begin{lemma}\label{lem-v0}
Let $d, m\in \N$ with $m\leq d/2$ and $\B_d(t)$ be the standard Brownian motion in $\R^d$. Then there exists a random unit vector 
$\bar v_0\in \R^d$ such that 
\begin{align*}
\P&\Bigl\{\langle \bar v_0 , \B_d(a_{i+1})-\B_d(a_i)\rangle \geq \frac{c_{\ref{normal-vector-lem}}}{2}
\sqrt{\frac{d a_{i+1} }{m} },\, i=0,\ldots m\Bigr\}\\
&\geq 1-\exp(-c_{\ref{normal-vector-lem}} d). 
\end{align*}
\end{lemma}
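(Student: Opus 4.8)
The plan is to derive this statement directly from Lemma~\ref{normal-vector-lem} by feeding it the right Gaussian vectors and the right choice of the vector $b$. The key observation is that the increments $\B_d(a_{i+1})-\B_d(a_i)$, $i=0,1,\dots,m$, are independent centered Gaussian vectors in $\R^d$ with covariance $(a_{i+1}-a_i)\Id_d$. Since $a_0=0$ and $a_i=2^{i-1}$ for $i\ge 1$, we have $a_{i+1}-a_i=a_i$ for all $i\ge 1$ and $a_1-a_0=1=a_1$, so in every case $a_{i+1}-a_i=\max(1,a_i)$, which is comparable to $a_{i+1}$ up to a factor of $2$. Concretely, I would set
$$
X_i:=\frac{\B_d(a_{i+1})-\B_d(a_i)}{\sqrt{a_{i+1}-a_i}},\qquad i=0,1,\dots,m,
$$
so that $X_0,X_1,\dots,X_m$ are independent standard Gaussian vectors in $\R^d$ (there are $m+1$ of them, and one should apply Lemma~\ref{normal-vector-lem} with ``$m$'' replaced by $m+1$, which is still at most $d/2$ after adjusting the constant $n_0$, or simply restate with the mild hypothesis $m+1\le d/2$).

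Next I would choose the vector $b\in\Sphere^{m}$ to have all coordinates equal in absolute value, i.e. $b_i=1/\sqrt{m+1}$ for every $i$. Applying Lemma~\ref{normal-vector-lem} to $X_0,\dots,X_m$ and this $b$ produces a random unit vector $\bar u_b$ (which we rename $\bar v_0$) with
$$
\P\Bigl\{\langle \bar v_0,X_i\rangle\ge c_{\ref{normal-vector-lem}}\sqrt{\frac{d}{m+1}}\ \text{ for all }i=0,\dots,m\Bigr\}\ge 1-\exp(-c_{\ref{normal-vector-lem}}d).
$$
Multiplying through by $\sqrt{a_{i+1}-a_i}$ and using $a_{i+1}-a_i\ge a_{i+1}/2$ together with $m+1\le 2m$ (valid for $m\ge 1$) converts this into
$$
\langle \bar v_0,\B_d(a_{i+1})-\B_d(a_i)\rangle\ge c_{\ref{normal-vector-lem}}\sqrt{\frac{d\,(a_{i+1}-a_i)}{m+1}}\ge \frac{c_{\ref{normal-vector-lem}}}{2}\sqrt{\frac{d\,a_{i+1}}{m}}
$$
on the same event, which is exactly the claimed bound.

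There is essentially no hard step here: the only things to be careful about are the bookkeeping of $m$ versus $m+1$ increments (so that the hypothesis $m\le d/2$ of Lemma~\ref{normal-vector-lem} is still met, possibly after shrinking $c_{\ref{normal-vector-lem}}$ or enlarging $n_0$), and the elementary comparisons $a_{i+1}-a_i=\max(1,a_i)\in[a_{i+1}/2,\,a_{i+1}]$ and $m+1\le 2m$ that absorb the constants into the factor $1/2$ in front of $c_{\ref{normal-vector-lem}}$. The measurability assertion (that $\bar v_0$ is a Borel function of the relevant Gaussian vectors) is inherited verbatim from the corresponding clause of Lemma~\ref{normal-vector-lem}.
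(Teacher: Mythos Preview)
Your proposal is correct and is exactly the approach the paper intends: the text introduces Lemma~\ref{lem-v0} as ``a direct consequence of Lemma~\ref{normal-vector-lem}'' and explicitly says ``we will use Lemma~\ref{normal-vector-lem} taking all coordinates of the vector $b$ equal,'' which is precisely what you do. The minor $m$ versus $m+1$ bookkeeping and the factor-of-two comparisons $a_{i+1}-a_i\ge a_{i+1}/2$, $m+1\le 2m$ are handled correctly and are absorbed into the constant $c_{\ref{normal-vector-lem}}/2$ as you indicate.
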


We note that, conditioned on a realization of $\B_d(a_1),\ldots, \B_d(a_{m+1})$ (hence, $\bar v_0$),  
for each admissible $i\geq 1$ the process
$$\langle\bar v_0, \B_d\big(a_i +t(a_{i+1}-a_i)\big)\rangle,\;\; t\in[0,1],$$ 
is a (non-centered) Brownian bridge, and standard estimates (see, for example, \cite[p.~34]{SW}) 
together with above lemma imply that 
given $i$, we have $\langle \bar v_0 , \B_d(a_i +t(a_{i+1}-a_i))\rangle > 0$ for all $t\in[0,1]$ with 
probability at least $1-2\exp(-c''d/m)$ for a universal constant $c''$. If $m\ll d/\ln d$ then applying the union 
bound we get $\langle \bar v_0,\B_d(t)\rangle >0$ for all $1\leq t\leq a_{m+1}$ with high probability. 

The argument described above is given in \cite{MR3161524}. Note that for $m\gg d/\ln d$ the probability that the $i$-th Brownian bridge
is not positive becomes too large to apply the union bound over all $i$. For this reason, we significantly modified the approach
of \cite{MR3161524}.
Let $M:=\log_2 \ln n$ (we will further treat the quantity as an integer, omitting a truncation operation).
Our construction will be iterative: after defining vector $\bar v_0$ as 
described above, we will produce a sequence
of random vectors $\bar v_k$, $k=1,\dots,M$, where each $\bar v_k$ with a high probability
satisfies $\langle \bar v_k,\B_n(t)\rangle >0$ for all $t$ in a certain discrete subset of $[1,2^{cn}]$.
The subset for $\bar v_k$ is obtained by zooming in and adding mid-points 
between every two neighbouring points of the subset generated for $\bar v_{k-1}$.
The size of those discrete subsets grows with $k$ exponentially, so
that the vector $\bar v:=\bar v_{M}$ will possess the required property \eqref{condition on barn} with probability
close to one. The definition of the subsets is made more precise below.

We split the interval $[0,a_{N+1}]$ into blocks.
For each admissible $i\ge 0$,
{\it the $i$-th block} is the interval $[a_i,a_{i+1}]$.
With the $i$-th block, we associate a sequence of sets
$I^i_k$, $k=0,1,\dots,M,$ in the following way:
for $i=0$ we have $I^i_k=\emptyset$ for all $k\ge 0$;
for $i\ge 1$, we set $I^i_0=\emptyset$ and
$$I^i_k:=\{2^{1/2^{k}}a_i,2^{2/2^k}a_i,2^{3/2^k}a_i,\dots,2^{(2^k-1)/2^k}a_i\},
\;\;k=1,2,\dots,M.$$

Given any $0<k\le M$, the vector $\bar v_k$ will be a small perturbation of the vector $\bar v_{k-1}$.
The operation of constructing $\bar v_k$ will be referred to as {\it the $k$-th step} of the construction.
We must admit that the construction is rather technical. In fact, each step itself is divided
into a sequence of {\it substeps}. To make the exposition of the proof as clear as possible, we
won't provide all the details at once but instead introduce them sequentially.

At each step, to avoid issues connected with probabilistic dependencies, the already constructed vector $\bar v_{k-1}$ and
the perturbation added to it will be defined on disjoint coordinate subspaces of $\R^n$.
Namely, we split $\R^n$ into $M+1$ coordinate subspaces as follows
$$
\R^n:=\prod_{k=0}^{M}\R^{J^k},
$$
where $J^k$ are pairwise disjoint subsets of $\{1,\dots,n\}$ with
$|J^k|=\tilde c n2^{-k/8}$ for an appropriate constant $\tilde c$ (chosen so that $\sum_{k\leq M} \vert J^k\vert =n$) and
$\R^{J^k}= \spn\{e_i\}_{i\in J^k}$. Again, for a lighter exposition
we treat the quantities $\tilde c n2^{-k/8}$ as integers. For every $k\le M$, define $\Proj^k:\R^n\to \R^n$
as the orthogonal projection onto $\R^{J^k}$.

Let $F, H: \N\to\R_+$ be a decreasing and an increasing function, respectively, satisfying the relations
\begin{equation}\label{property-F-H}
8c\, F(1)^2=\tilde c\, {c_{\ref{normal-vector-lem}}}^2\quad \text{ and }\quad  \forall k\leq M,\quad F(k)\geq C_f\geq 2H(k) ,
\end{equation}
where $C_f>0$ is a constant which will be determined later. 

Now, we can state more precisely what we mean by the $k$-th step of the construction ($k=0,1,\dots,M$). {\bf The goal
of the $k$-th step is to produce a random unit vector $\bar v_k$ with the following properties:}
\begin{align}
&\begin{aligned}{\bf 1.}\;\bar v_k\mbox{ is supported on }\prod_{p=0}^{k}\R^{J^p};\end{aligned}\label{nk prop 1}\\
&\begin{aligned}&{\bf 2.}\;\bar v_k\mbox{ is measurable with respect to the $\sigma$-algebra generated by}\\
&\mbox{$\Proj^p (\B_n(t))$ for all $0\le p\le k$, $t\in\bigcup_{i=0}^N\bigl(\{a_{i+1}\}\cup I^i_k\bigr)$};
\end{aligned}\label{nk prop 2} \\
&\vphantom{A^{\int\limits^1}}{\bf 3.}\;\mbox{The event}\nonumber\\
&\hspace{1cm}\begin{aligned}
\Event_k:=\Bigl\{&\langle \bar v_{k},\B_n(t)-\B_n(a_i)\rangle\ge -H(k+1)\sqrt{a_i}\mbox{ and}\Bigr.\\
\Bigl.&\langle \bar v_{k},\B_n(a_{i+1})-\B_n(a_{i})\rangle\ge F(k+1)\sqrt{a_{i+1}}\Bigr.\\
\Bigl.&\mbox{for all }t\in I^i_k\mbox{ and }i=0,1,\dots,N\Bigr\}
\end{aligned}\nonumber\\
&\mbox{has probability close to one}\nonumber.
\end{align}

Quantitative estimates of $\P(\Event_k)$ are provided by the following lemma which will be proved in the next section.
\begin{lemma}[$k$-th Step]\label{k step lemma}
For a small enough constant $c>0$ and a large enough $C_f>0$, there exist $F$ and $H$ satisfying (\ref{property-F-H}) 
such that the following holds. Let $1\le k\le M$ and assume that a random unit vector $\bar v_{k-1}$ satisfying properties \eqref{nk prop 1}, \eqref{nk prop 2}
has been constructed. Then there exists a random unit vector $\bar v_{k}$ satisfying \eqref{nk prop 1}---\eqref{nk prop 2}
and such that
$$\P(\Event_k)\ge\P(\Event_{k-1})-\frac{1}{n^2}.$$
\end{lemma}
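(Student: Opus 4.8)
\textbf{Proof proposal for Lemma~\ref{k step lemma}.} The plan is to construct $\bar v_k$ by adding to $\bar v_{k-1}$ a perturbation living entirely in the coordinate subspace $\R^{J^k}$, so that the two pieces are defined from disjoint blocks of Gaussian coordinates and hence independent. The perturbation must kill the newly introduced midpoints: for each block $i\ge 1$ and each $t\in I^i_k\setminus I^i_{k-1}$ (the points of the form $2^{(2\ell-1)/2^k}a_i$), we need $\langle \bar v_k,\B_n(t)-\B_n(a_i)\rangle$ to stay above $-H(k+1)\sqrt{a_i}$, while the endpoint condition at $a_{i+1}$ (with the larger constant $F(k+1)\le F(k)$) should persist essentially for free, since it was already true for $\bar v_{k-1}$ with the bigger constant $F(k)$ and the perturbation is small.

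First I would use Lemma~\ref{brownian-segment-split}: fix a midpoint $s=2^{(2\ell-1)/2^k}a_i$ sitting between two consecutive points $a<s<b$ of the configuration $I^i_{k-1}\cup\{a_i,a_{i+1}\}$. Write $\B_n(s)=w(s)+u(s)$, where $w(s)$ is the linear interpolation of the values at $a$ and $b$, and $u(s)$ is a centered Gaussian vector with covariance $\tfrac{(b-s)(s-a)}{b-a}\Id_n$, independent of the process outside $(a,b)$. For the interpolated part, $\langle\bar v_{k-1},w(s)-\B_n(a_i)\rangle$ is a convex combination of quantities already controlled by $\Event_{k-1}$, so it is at least $-H(k)\sqrt{a_i}$ — and since all these interior increments over the $k$-th generation points are at scale $\sqrt{a_i}$ times a factor $2^{-\Theta(k)}$, one actually gets a better bound, say $-H(k)\sqrt{a_i}$ which we will improve to $-H(k+1)\sqrt{a_i}$ using the gap $H(k+1)\ge H(k)$... wait, $H$ is increasing, so here the point is instead that the \emph{new} fluctuation $u(s)$ has standard deviation $\sqrt{(b-s)(s-a)/(b-a)}\asymp 2^{-k/2}\sqrt{a_i}$, which is exponentially small in $k$, so its contribution is negligible compared to the slack $H(k+1)-H(k)$ available. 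The role of $\bar v_k$ projected on $\R^{J^k}$ is then to push $\langle\bar v_k,u(s)\rangle$ to be \emph{nonnegative} (or at least not too negative) simultaneously for all the new midpoints: there are about $2^{k-1}$ new midpoints per block and $N=cn$ blocks, so roughly $2^{k-1}cn$ linear conditions, all of the form $\langle P^k\bar v_k, u^i_\ell\rangle\ge 0$ where $u^i_\ell$ are (essentially) independent Gaussian vectors in $\R^{J^k}$ with $|J^k|=\tilde c n 2^{-k/8}$.

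Next I would invoke Lemma~\ref{normal-vector-lem} (or rather its proof) inside $\R^{J^k}$: with $d=|J^k|=\tilde c n2^{-k/8}$ and $m\asymp 2^{k-1}cn$ — but here $m$ is \emph{much larger} than $d$, so the direct application fails. This is where the argument has to be more careful, and I expect \textbf{this to be the main obstacle}: one cannot find a single direction in $\R^{J^k}$ positively correlated with $2^{k}cn\gg|J^k|$ independent Gaussian vectors. The resolution is the quantitative slack: we do not need $\langle P^k\bar v_k,u^i_\ell\rangle\ge 0$, only $\ge -(\text{something})\sqrt{a_i}$ with room coming from $H(k+1)$. Equivalently, we need a unit vector whose inner products with the $u^i_\ell/\|\cdot\|$ are not \emph{too negative}; by a covering/volumetric argument (Gaussian width or a net on the sphere of $\R^{J^k}$, exactly the style of Theorem~\ref{gaussian-escape} and Proposition~\ref{prop-width-cone}), a random or extremal direction has all these inner products bounded below by $-C\sqrt{\log(m/d)}$ times the typical norm, i.e.\ $-C\sqrt{k}\cdot 2^{-k/2}\sqrt{a_i}$, which is summable and absorbed by the $2H(k)\le F(k)$ budget in \eqref{property-F-H} for large $C_f$. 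Concretely I would: (i) discretize the $u^i_\ell$'s via the net on $\Sphere^{|J^k|-1}$; (ii) for a fixed candidate direction apply the Gaussian tail \eqref{feller ineq} and Lemma~\ref{norm of noncentered gaussian} to bound the $\ell^2$-mass of the ``bad'' (too-negative) coordinates by $4\sqrt{m}\exp(-r^2/8)$ with $r\asymp\sqrt{\log(m/d)}$; (iii) union bound over the net of size $e^{O(|J^k|)}=e^{O(n2^{-k/8})}$, which beats the failure probability $e^{-2\sqrt m}=e^{-\Omega(\sqrt{n2^k})}$ since $2^{-k/8}\ll 2^{k/2}$.

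Finally I would assemble: set $\bar v_k := \cos\phi_k\,\bar v_{k-1} + \sin\phi_k\, w_k$ where $w_k\in\R^{J^k}$ is the unit direction just produced and $\phi_k$ is a tiny angle (or simply normalize $\bar v_{k-1}+$ small multiple of $w_k$); properties \eqref{nk prop 1} and \eqref{nk prop 2} are immediate from the construction (disjoint supports, measurability w.r.t.\ the enlarged $\sigma$-algebra that now includes $\Proj^k(\B_n(t))$ at the new times $t\in I^i_k$, using independence from Lemma~\ref{brownian-segment-split}(2)). On the event $\Event_{k-1}$ intersected with the good event from the perturbation step (which fails with probability at most $\tfrac1{n^2}$ after the union bound, by the choice of $M=\log_2\ln n$ so that there are only $M=O(\log\log n)$ steps and $e^{-\Omega(\sqrt n)}\cdot(\#\text{net})$ is $\le n^{-2}$), every inequality defining $\Event_k$ holds: the endpoint inequalities because $\langle \bar v_k,\B_n(a_{i+1})-\B_n(a_i)\rangle\ge \cos\phi_k\,F(k)\sqrt{a_{i+1}}-\sin\phi_k\,\|\cdot\|\ge F(k+1)\sqrt{a_{i+1}}$ using $F(k)>F(k+1)$ and smallness of $\phi_k$, and the interior-point inequalities at the new and old midpoints because the interpolation part is controlled by $\Event_{k-1}$ and the fresh Gaussian fluctuation $u(s)$ is handled by $w_k$ up to an error summing to less than $H(k+1)-H(k)$, which is where the relations \eqref{property-F-H} and the constant $C_f$ are used. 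This yields $\P(\Event_k)\ge\P(\Event_{k-1})-n^{-2}$, completing the proof.
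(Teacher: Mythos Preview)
Your proposal correctly identifies the Brownian-bridge decomposition (Lemma~\ref{brownian-segment-split}) and the main obstacle: the number of new midpoint constraints, roughly $2^{k-1}N$, vastly exceeds $|J^k|$. But the resolution you sketch does not work, and the paper's actual mechanism is fundamentally different.

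The concrete gap is in your net argument (i)--(iii). For a fixed candidate direction $w\in\R^{J^k}$, Lemma~\ref{norm of noncentered gaussian} controls the $\ell^2$-mass of the ``too-negative'' inner products, and your union bound over a net on $\Sphere^{|J^k|-1}$ would at best show that this $\ell^2$-mass is uniformly small for \emph{all} directions. But the event $\Event_k$ demands that \emph{every} block inequality holds, i.e.\ zero bad coordinates, not merely small $\ell^2$-badness. A net argument is the wrong tool here: it proves a statement for all $w$, whereas you need existence of one $w$ with no violations. Moreover, the term $\cos\phi_k\langle\bar v_{k-1},u(s)\rangle$ is already fixed (it depends on coordinates in $\bigcup_{p<k}J^p$), and for $k$ small its worst value over the $\sim N$ new midpoints is of order $\sqrt{\log N}\cdot 2^{-k/2}\sqrt{a_i}$, which is \emph{not} absorbed by any constant slack $H(k+1)-H(k)$. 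Your perturbation $\sin\phi_k\langle w_k,u(s)\rangle$ lives on the independent coordinates $J^k$ and can compensate at specific bad midpoints only if $w_k$ is deliberately aligned with the corresponding $\Proj^k u(s)$'s; a net does not produce such a $w_k$.

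What the paper does instead is an \emph{iterated correction} inside each step $k$. It further splits $J^k=\bigsqcup_{\ell\le M'}J^k_\ell$ with $M'=\tfrac14\log_2\ln n$, defines block statistics $\BStat_i(k,\ell)$ measuring by how much block $i$ currently violates the target inequalities, and at each substep $\ell$ uses Lemma~\ref{normal-vector-lem} in $\R^{J^k_\ell}$ with $b$ proportional to $\BStat(k,\ell)$ restricted to the bad blocks $\BadBlocks(k,\ell)$. Crucially, Lemmas~\ref{BStat initial substep lemma} and~\ref{BStat subsequent substeps lemma} show that $|\BadBlocks(k,\ell)|$ and $\|\BStat(k,\ell)\|$ shrink superexponentially in $\ell$, so that $|\BadBlocks(k,\ell)|\le\tfrac12|J^k_\ell|$ and Lemma~\ref{normal-vector-lem} applies. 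The perturbation $\alpha_{k,\ell}\bar\Delta_{k,\ell}$ fixes the current bad blocks (Lemma~\ref{construction of deltakl}) but, being independent on the remaining blocks, may create a few new bad ones --- exponentially fewer. After $M'$ substeps, $\BadBlocks(k,M'+1)=\emptyset$ with the required probability. Your single-shot construction has no analogue of this feedback loop and no reserve of fresh coordinates to spend on successive corrections; that is the missing idea.
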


\begin{proof}[Proof of Theorem~B]
In view of the relation \eqref{property-F-H},
we have 
$$
2F(1)= c_{\ref{normal-vector-lem}}\sqrt{\frac{\tilde c}{2c}} \leq c_{\ref{normal-vector-lem}}\sqrt{\frac{\vert J^0\vert}{N}}.
$$
Hence, in view of Lemma~\ref{lem-v0} (applied with $m=N$ and $d=\vert J^0\vert$), there exists a random unit vector $\bar v_0\in\R^{J^0}$ measurable
with respect to the $\sigma$-algebra generated by vectors $\Proj^0(\B_n(a_{i+1})-\B_n(a_i))$, $i=0,1,\dots,N$,
and such that
\begin{align*}
\P(\Event_0)&=
\P\bigl\{\langle \bar v_0,\B_n(a_{i+1})-\B_n(a_i)\rangle \ge F(1)\sqrt{a_{i+1}}\mbox{ for }
i=0,1,\dots,N\bigr\}\\
&\ge 1-\exp(-c_{\ref{normal-vector-lem}}|J^0|)\\
&\ge 1-\frac{1}{n^2}.
\end{align*}
Applying Lemma~\ref{k step lemma} $M$ times, we obtain a random unit vector $\bar v_M$ satisfying
\eqref{nk prop 1}--\eqref{nk prop 2} such that
$$\P(\Event_M)\ge 1-\frac{M+1}{n^2}.$$
Note that everywhere on $\Event_M$, we have
$$\langle \bar v_M,\B_n(a_{i+1})\rangle\ge \langle \bar v_M,\B_n(a_{i+1})-\B_n(a_{i})\rangle\ge C_f\sqrt{a_{i+1}}$$
and
$$\langle \bar v_M,\B_n(t)\rangle \ge \langle \bar v_M,\B_n(a_i)\rangle-\frac{C_f}{2}\sqrt{a_i}\ge\frac{C_f}{2}\sqrt{a_i},\;\;
t\in I^i_k$$
for all $i=0,1,\dots,N$.
Hence, denoting $Q:=\{a_1,a_2,\dots,a_{N+1}\}\cup\bigcup_{i=1}^N I^i_M$, we get
\begin{equation}\label{eventM is subset}
\Event_M\subset\Bigl\{\bigl\langle\bar v_M,\frac{\B_n(t)}{\sqrt{t}}\bigr\rangle\ge \frac{C_f}{4},\;\;t\in Q\Bigr\}.
\end{equation}
Now, take any two neighbouring points $t_1<t_2$ from $Q$.
Note that, conditioned on a realization of vectors $\B_n(t)$, $t\in Q$, the random process
$$X(s)=
\bigl\langle\bar v_M,\frac{s\B_n(t_2)+(1-s)\B_n(t_1)}{\sqrt{t_2-t_1}}\bigr\rangle
-\bigl\langle \bar v_M,\frac{\B_n(t_1+s(t_2-t_1))}{\sqrt{t_2-t_1}}\bigr\rangle,$$
defined for $s\in[0,1]$,
is a standard Brownian bridge. Hence (see, for example, \cite[p.~34]{SW}), we have for any $\tau>0$
$$\P\bigl\{X(s)\ge \tau\mbox{ for some }s\in [0,1]\bigr\}=\exp(-2\tau^2).$$
Taking $\tau:=2\sqrt{\ln n}$, we obtain
\begin{align*}
\P\bigl\{\bigl\langle\bar v_M,\B_n(t)\bigr\rangle
&\le\min\bigl(\langle\bar v_M,\B_n(t_1)\rangle,\langle\bar v_M,\B_n(t_2)\rangle\bigr)\bigr.\\
&\bigl.-2\sqrt{t_2-t_1}\sqrt{\ln n}\mbox{ for some }t\in[t_1,t_2]\bigr\}\\
&\hspace{-2cm}\le\frac{1}{n^8}.
\end{align*}
Finally, note that, in view of \eqref{eventM is subset}, everywhere on $\Event_M$ we have
\begin{align*}
&(t_2-t_1)^{-1/2}\min\bigl(\langle\bar v_M,\B_n(t_1)\rangle,\langle\bar v_M,\B_n(t_2)\rangle\bigr)-2\sqrt{\ln n}\\
&\ge \frac{C_f}{4}\sqrt{\frac{t_1}{t_2-t_1}}-2\sqrt{\ln n}\\
&\ge 2^{M/2-3}C_f-2\sqrt{\ln n}\\
&>0.
\end{align*}
Taking the union bound over all adjacent pairs in $Q$ (clearly, $|Q|\le n^2$), we come to the relation
$$\P\bigl\{\langle\bar v_M,\B_n(t)\rangle>0\mbox{ for all }t\in[1,2^{cn}]\bigr\}\ge\P(\Event_M)-\frac{|Q|}{n^8}\ge 1-\frac{1}{n}.$$
\end{proof}

\subsection{\bf Proof of Lemma~\ref{k step lemma}}

Let $M'=\frac{1}{4}\log_2 \ln n$. For every $k\leq M$, we split $J^k$ into pairwise 
disjoint subsets $J_\ell^k$, $\ell \leq M'$, with $|J_\ell^k|=c'n2^{-(k+\ell)/8}$ 
for an appropriate constant $c'$, chosen so that $\sum_{\ell\leq M'}\vert J_\ell^k\vert=\vert J^k\vert$ 
(to make computations lighter, we will treat the quantities $c'n2^{-(k+\ell)/8}$, $k\leq M,\ell\leq M'$,
as integers).
For every $k\le M,\ell\le M'$, define $\Proj_\ell^k:\R^n\to \R^n$
as the orthogonal projection onto $\R^{J_\ell^k}$.

Further, we define two functions $f,h:\N\times\N_0\to\R_+$
as follows:
\begin{enumerate}
\item $f$ is decreasing in both arguments;
$f(1,0)=C_f+2^{-1/2}(1-2^{-1/4})^{-2} C_f$; for each $k> 0$ and
$\ell>0$ we have $f(k,\ell-1)-f(k,\ell)=C_f2^{-(k+\ell)/4}$;
finally, $f(k,0)=\lim\limits_{\ell\to\infty}f(k-1,\ell)$ for all $k> 1$.
The constant $C_f>0$ is defined via the relation $8c f(1,0)^2=\tilde c {c_{\ref{normal-vector-lem}}}^2$,
where $\tilde c$ is taken from the definition of sets $J^k$ and $c_{\ref{normal-vector-lem}}$
comes from Lemma~\ref{normal-vector-lem}.

\item $h$ is increasing in both arguments;
$h(1,0)=0$; for each $k> 0$ and
$\ell>0$ we have $h(k,\ell)-h(k,\ell-1)=C_h2^{-(k+\ell)/4}$;
moreover, $h(k,0)=\lim\limits_{\ell\to\infty}h(k-1,\ell)$ for all $k> 1$.
The constant $C_h$ is defined by $C_h=2^{-1/2}(1-2^{-1/4})^{2}C_f$.
\end{enumerate}

Now define $F:\N\to \R$ and $H:\N\to \R$ by $F(k):=f(k,0)$ and $H(k):=h(k,0)$ for any $k\in\N$.   
Note that $F$ and $H$ satisfy \eqref{property-F-H}.

Fix $k\ge 1$. Assuming that the vector $\bar v_{k-1}$ is already constructed, the aim is to construct $\bar v_k$  such that 
the event $\Event_k$ has large probability. 
The vector $\bar v_k$ is obtained via an embedded iteration procedure realized as
a sequence of substeps. Namely, we set $\bar v_{k,0}:=\bar v_{k-1}$ and inductively construct random vectors $\bar v_{k,\ell}$, $1\le\ell\le M'$ 
and take $\bar v_k=\bar v_{k,M'}$. Let us give a partial description of the procedure, omitting some details.

For each $\ell=1,2,\dots,M'+1$ and every
block $i=0,1,2,\dots,N$ {\it the $i$-th block statistic} is
\begin{equation}\label{def-statistics}
\begin{split}
\BStat_i(k,\ell):=\max\Bigl(0,
&\max\limits_{t\in I^i_k}\bigl\langle \bar v_{k,\ell-1},
\frac{\B_n(a_i)-\B_n(t)}{\sqrt{a_i}}\bigr\rangle-h(k,\ell),\Bigr.\\
\Bigl.&\bigl\langle \bar v_{k,\ell-1},\frac{\B_n(a_i)-\B_n(a_{i+1})}{\sqrt{a_{i+1}}}\bigr\rangle+f(k,\ell)\Bigr).
\end{split}
\end{equation}
Note that the statistic for the zero block is simply
$$\max\Bigl(0,-\bigl\langle \bar v_{k,\ell-1},
\B_n(a_{1})\bigr\rangle+f(k,\ell)\Bigr).$$
The $(N+1)$-dimensional vector $\bigl(\BStat_0(k,\ell),\dots,\BStat_N(k,\ell)\bigr)$
will be denoted by $\BStat(k,\ell)$. Let us also denote
$$\BadBlocks(k,\ell):=\bigl\{i:\,\BStat_i(k,\ell)\neq 0\bigr\}.$$
Note that the event $\{\BadBlocks(k,M'+1)=\emptyset\}$ is contained inside $\Event_k$. 
At each substep, using information about the statistics 
$\BStat(k,\ell)$ and choosing an appropriate perturbation of $\bar v_{k,{\ell-1}}$ 
to obtain $\bar v_{k,\ell}$, we will control the measure of the event $\{\BadBlocks(k,\ell+1)=\emptyset\}$,
and in this way will be able to estimate the probability of $\Event_k$ from below.

{\bf Given $\bar v_{k,\ell-1}$, the goal of the $\ell$-th substep is to construct a random unit vector $\bar v_{k,\ell}$ such that}
\begin{align}
&\begin{aligned}&{\bf 1.}\;\bar v_{k,\ell}\mbox{ is supported on }\prod_{(p,q)\precsim (k,\ell)}\R^{J^p_q},\mbox{ where
the notation}\\
&\mbox{$(p,q)\precsim (k,\ell)$ means ``$p<k$ or $p=k,\,q\le \ell$''};
\end{aligned}\label{nkl prop 1}\\
&\begin{aligned}&\vphantom{{A^{\int\limits^1}}}{\bf 2.}\;\mbox{$\bar v_{k,\ell}$ is measurable with respect to the $\sigma$-algebra generated by}\\
&\mbox{$\Proj^p_q (\B_n(t))$ for all $(p,q)\precsim (k,\ell)$ and $t\in
\bigcup_{i=0}^N\bigl(\{a_{i+1}\}\cup I^i_k\bigr)$};
\end{aligned}\label{nkl prop 2}\\
&\vphantom{A^{\int\limits^1}}\begin{aligned}{\bf 3.}\;\mbox{$\|\BStat(k,\ell+1)\|$ is typically smaller than $\|\BStat(k,\ell)\|$.}
\end{aligned}\nonumber
\end{align}
The third property will be made more precise later. For now, we note that
the typical value of $\|\BStat(k,\ell)\|$ will decrease with $\ell$ 
in such a way that, after the $M'$-th substep, the vector $\BStat(k,M'+1)$ will be zero with probability close to one.

The vector $\bar v_{k,\ell}$ will be defined as
\begin{equation}\label{def of nkl via delta}
\bar v_{k,\ell}=\frac{\bar{v}_{k,\ell-1}+\alpha_{k,\ell}\bar\Delta_{k,\ell}}{\sqrt{1+{\alpha_{k,\ell}}^2}},
\end{equation}
where $\bar\Delta_{k,\ell}$ is a random unit vector (perturbation)
and $\alpha_{k,\ell}:=16^{-k-\ell}$.\\
{\bf The vector $\bar\Delta_{k,\ell}$ will satisfy the following properties:}
\begin{align}
&\begin{aligned}{\bf 1.}\;\bar\Delta_{k,\ell}\mbox{ is supported on }\R^{J^k_\ell};\end{aligned}\label{deltakl prop 1}\\
&\begin{aligned}&\vphantom{{A^{\int\limits^1}}}{\bf 2.}\;
\mbox{$\bar\Delta_{k,\ell}$ is measurable with respect to the $\sigma$-algebra generated by}\\
&\mbox{$\Proj^p_q (\B_n(t))$
for all admissible $(p,q)\precsim (k,\ell)$, $t\in\bigcup_{i=0}^N\bigl(\{a_{i+1}\}\cup I^i_k\bigr)$};
\end{aligned}\label{deltakl prop 2}\\
&\begin{aligned}&\vphantom{{A^{\int\limits^1}}}{\bf 3.}\;
\mbox{For any subset $I\subset\{0,1,\dots,N\}$ such that $\P\{\BadBlocks(k,\ell)=I\}>0$,}\\
&\mbox{$\bar\Delta_{k,\ell}$ is {\it conditionally} independent from the collection of vectors}\\
&\hspace{1cm}\bigl\{\Proj^k_\ell(\B_n(t)-\B_n(a_i)),\;\;t\in I^i_k\cup\{a_{i+1}\},\;\;i\notin I\bigr\}\\
&\mbox{given the event $\{\BadBlocks(k,\ell)=I\}$.}
\end{aligned}\label{deltakl prop 3}\\
&\begin{aligned}
&\vphantom{{A^{\int\limits^1}}}{\bf 4.}\;
\mbox{The event}\\
&\hspace{2cm}\Event_{k,\ell}:=\bigl\{\BStat_i(k,\ell+1)=0\mbox{ for all }i\in\BadBlocks(k,\ell)\bigr\}\\
&\mbox{has probability close to one.}
\end{aligned}\nonumber
\end{align}
Again, we will make the last property more precise later.

Let us sum up the construction procedure. We sequentially produce random unit vectors
$\bar v_0=\bar v_{1,0}$, $\bar v_{1,1}$, $\bar v_{1,2},\dots$, $\bar v_{1,M'}=\bar v_1=\bar v_{2,0}$,
$\bar v_{2,1}$, $\bar v_{2,2},\dots$, $\bar v_{2,M'}=\bar v_2=\bar v_{3,0},\dots$, $\dots$, $\bar v_{M,M'}=\bar v_M$
(in the given order). Each next vector is a random perturbation of the previous one.
In a certain sense (quantified with help of order statistics $\BStat(k,\ell)$),
each newly produced vector is a refinement of the previous one in such a way
that $\bar v_M=\bar v$ will possess the required characteristics.

In the next two lemmas, we establish certain important properties of the block statistics.

\begin{lemma}[Initial substep for block statistics]\label{BStat initial substep lemma}
Fix any $1\le k\le M$ and assume that a random unit vector $\bar v_{k,0}:=\bar v_{k-1}$ satisfying
properties \eqref{nk prop 1} and \eqref{nk prop 2} has been constructed. Then
\begin{align*}
\P&\Bigl\{|\BadBlocks(k,1)|\le N\exp(-{C_h}^2 2^{k/2}/16)\mbox{ and }
\|\BStat(k,1)\|\le\frac{8\sqrt{N}}{\exp({C_h}^2 2^{k/2}/32)}\Bigr\}\\
&\ge \P(\Event_{k-1})-2\exp(-2\sqrt{N}).
\end{align*}
\end{lemma}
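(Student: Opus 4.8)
\textbf{Proof proposal for Lemma~\ref{BStat initial substep lemma}.}

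The plan is to analyze the ``initial substep'' block statistics $\BStat_i(k,1)$ one block at a time, conditionally on the realization of $\bar v_{k,0}=\bar v_{k-1}$ and of the Brownian motion at the coarse grid $\{a_i\}_{i\le N+1}$. Fix a block $i\ge 1$ and recall the definition \eqref{def-statistics} of $\BStat_i(k,1)$ together with the definition of $I^i_k$: its points are $2^{j/2^k}a_i$ for $j=1,\dots,2^k-1$, all lying strictly inside $[a_i,a_{i+1}]$. On the event $\Event_{k-1}$ we have $\langle\bar v_{k-1},\B_n(a_{i+1})-\B_n(a_i)\rangle\ge F(k)\sqrt{a_{i+1}}$ and $\langle\bar v_{k-1},\B_n(t)-\B_n(a_i)\rangle\ge -H(k)\sqrt{a_i}$ for $t$ in the (coarser) set $I^i_{k-1}$; but $I^i_k$ contains new mid-points, so these bounds do not directly control $\BStat_i(k,1)$. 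The key tool is Lemma~\ref{brownian-segment-split}: write each new point $t=2^{j/2^k}a_i$ as sitting between two consecutive points $s_1<s_2$ of the previous-level grid $\{a_i\}\cup I^i_{k-1}\cup\{a_{i+1}\}$, decompose $\B_n(t)=w(t)+u(t)$ along the chord joining $\B_n(s_1)$ and $\B_n(s_2)$, and note that the bridge part $u(t)$ is an independent centered Gaussian with covariance $\tfrac{(s_2-t)(t-s_1)}{s_2-s_1}\Id_n$. Since $s_2-s_1\le a_i(2^{1/2^{k-1}}-1)\lesssim a_i 2^{-k}$ (when $i\ge1$; the scale $a_{i+1}$ enters for the last sub-interval but the same bound holds up to a constant), the fluctuation $\langle\bar v_{k-1},u(t)\rangle$ is Gaussian with standard deviation $\lesssim \sqrt{a_i}\,2^{-k/2}$, while $\bar v_{k-1}$ is measurable with respect to the coarse data and hence independent of $u(t)$ by part~(2) of Lemma~\ref{brownian-segment-split}.

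Next I would combine this with the linear interpolation: on $\Event_{k-1}$, the chord value $\langle\bar v_{k-1},w(t)-\B_n(a_i)\rangle$ is a convex combination of two numbers, one of which ($\langle\bar v_{k-1},\B_n(s_2)-\B_n(a_i)\rangle$) is at least $-H(k)\sqrt{a_i}$ (or, if $s_2=a_{i+1}$, at least $F(k)\sqrt{a_{i+1}}$) and similarly for $s_1$; so the ``deterministic part'' of $\langle\bar v_{k-1},\B_n(t)-\B_n(a_i)\rangle$ is $\ge -H(k)\sqrt{a_i}$, and likewise for the $a_{i+1}$-term the chord value interpolating two quantities $\ge F(k)\sqrt{a_{i+1}}$ stays $\ge F(k)\sqrt{a_{i+1}}$. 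Consequently, on $\Event_{k-1}$, the only way to have $\BStat_i(k,1)\neq 0$ is for some bridge fluctuation to exceed the gap between $H(k)$ and $h(k,1)=H(k)+C_h2^{-(k+1)/4}$, i.e.\ $\langle\bar v_{k-1},u(t)\rangle < -C_h2^{-(k+1)/4}\sqrt{a_i}$ for some $t\in I^i_k$, or the analogous event involving $f(k,1)=F(k)-\dots$; by the Gaussian tail \eqref{feller ineq} each such event has probability $\le\exp(-c\,C_h^2\,2^{-(k+1)/2}/2^{-k})=\exp(-c'C_h^2 2^{k/2})$ per point, and there are fewer than $2^k$ points, yielding (after absorbing $2^k$ into the exponent and adjusting constants) $\P\{\BStat_i(k,1)\neq 0\mid \Event_{k-1},\text{coarse data}\}\le \exp(-C_h^2 2^{k/2}/16)$; moreover on this bad event the size $|\BStat_i(k,1)|$ has a Gaussian upper tail with the same parameter. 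Thus, conditionally, the indicators $\chi_i:=\mathbf 1\{\BStat_i(k,1)\neq0\}$, $i=1,\dots,N$, are (conditionally independent across blocks, since distinct blocks involve disjoint increments) Bernoulli variables with mean $\le p:=\exp(-C_h^22^{k/2}/16)$, and $\|\BStat(k,1)\|^2=\sum_i \BStat_i(k,1)^2$ is a sum of independent nonnegative variables each dominated by $\chi_i$ times a squared Gaussian of variance $\lesssim a_i 2^{-k}\cdot(\text{scale normalization})$, so that after the $\sqrt{a_i}$-normalization in \eqref{def-statistics} each term is $O(2^{-k})$ in expectation.

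To pass from the conditional statement to the unconditional bound with the claimed probability $\ge\P(\Event_{k-1})-2\exp(-2\sqrt N)$, I would apply two concentration estimates: first, a Bernstein/Chernoff bound for $|\BadBlocks(k,1)|=\sum_i\chi_i$ to get $|\BadBlocks(k,1)|\le Np=N\exp(-C_h^22^{k/2}/16)$ with conditional probability $\ge 1-\exp(-c Np)\ge 1-\exp(-2\sqrt N)$ (using $k\le M=\log_2\ln n$ so that $2^{k/2}\le\sqrt{\ln n}$ and hence $Np\gtrsim N/\mathrm{polylog}$ dominates $\sqrt N$ — this is exactly where the choice $M'=\tfrac14\log_2\ln n$ and the slow growth of the grid are used); second, a Laplace-transform bound in the spirit of Lemma~\ref{norm of noncentered gaussian} (indeed one can invoke that lemma almost verbatim, with $q=N$ and $r\asymp C_h 2^{k/4}$) to conclude $\|\BStat(k,1)\|\le 4\sqrt N\exp(-r^2/8)\le 8\sqrt N\exp(-C_h^22^{k/2}/32)$ with conditional probability $\ge 1-\exp(-2\sqrt N)$. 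Taking the union of these two failure events and integrating over the coarse data intersected with $\Event_{k-1}$ gives the lemma. The main obstacle I anticipate is the bookkeeping in Step~2: one must verify that on $\Event_{k-1}$ the linear-interpolation (chord) values genuinely inherit the lower bounds $-H(k)\sqrt{a_i}$ and $F(k)\sqrt{a_{i+1}}$ for \emph{every} new midpoint and every sub-interval type (interior points versus the rightmost sub-interval abutting $a_{i+1}$, where the two relevant scales $a_i$ and $a_{i+1}$ differ by a factor $2$), so that the only randomness left to control is the small, independent, explicitly-Gaussian bridge term; once that reduction is clean, the tail estimates and the two concentration steps are routine.
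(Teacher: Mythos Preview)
Your proposal is correct and follows essentially the same route as the paper: decompose each new midpoint via Lemma~\ref{brownian-segment-split} into a chord part (controlled deterministically on $\Event_{k-1}$) plus an independent bridge term of standard deviation $\lesssim 2^{-k/2}$, then use a Gaussian tail bound and Hoeffding for $|\BadBlocks(k,1)|$, and Lemma~\ref{norm of noncentered gaussian} for $\|\BStat(k,1)\|$. One small correction: when you invoke Lemma~\ref{norm of noncentered gaussian} you should take $q\le 2^k N$ (one coordinate per new midpoint, not per block), since $\BStat_i(k,1)$ is a \emph{max} over $\sim 2^{k-1}$ bridge terms rather than a single shifted Gaussian; the paper bounds $\|\BStat(k,1)\|$ by $2^{(1-k)/2}\bigl\|\bigl(\max(0,g_t-C_h2^{k/4}/2)\bigr)_t\bigr\|$ with the flattened vector of length $\le 2^kN$, and the prefactor $2^{(1-k)/2}$ (coming from the bridge variance) is exactly what cancels the extra $\sqrt{2^k}$ to yield the stated $8\sqrt N\exp(-C_h^2 2^{k/2}/32)$.
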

\begin{proof}
Let $i>0$ so that $I^i_k\neq \emptyset$.
For each $t\in I^i_k\setminus I^i_{k-1}$, let $t_L$ be the maximal number in $\{a_i\}\cup I^i_{k-1}$ strictly less
than $t$ (``left neighbour'') and, similarly, $t_R$ be the minimal number in $I^i_{k-1}\cup\{a_{i+1}\}$
strictly greater than $t$ (``right neighbour''). For every such $t$, let
$$w_t:=\frac{t_R-t}{t_R-t_L}\B_n(t_L)+\frac{t-t_L}{t_R-t_L}\B_n(t_R);\;\;u_t:=\B_n(t)-w_t.$$
It is not difficult to see that
\begin{align*}
\bigl\langle &\bar v_{k,0},\frac{\B_n(a_i)-w_t}{\sqrt{a_i}}\bigr\rangle\\
&\le \max\Bigl(\bigl\langle \bar v_{k,0},\frac{\B_n(a_i)-\B_n(t_L)}{\sqrt{a_i}}\bigr\rangle,
\bigl\langle \bar v_{k,0},\frac{\B_n(a_i)-\B_n(t_R)}{\sqrt{a_i}}\bigr\rangle\Bigr)\\
&\le \max\Bigl(0,
\max\limits_{\tau\in I^i_{k-1}}\bigl\langle \bar v_{k,0},\frac{\B_n(a_i)-\B_n(\tau)}{\sqrt{a_i}}\bigr\rangle,\\
&\hspace{1.7cm}\bigl\langle 2\bar v_{k,0},\frac{\B_n(a_i)-\B_n(a_{i+1})}{\sqrt{a_{i+1}}}\bigr\rangle\Bigr).
\end{align*}
Hence, the $i$-th block statistic (for $i=0,1,\dots,N$) can be (deterministically) bounded as
\begin{align*}
\BStat_i(k,1)
&\le\max\Bigl(0,
\max\limits_{t\in I^i_{k-1}}\bigl\langle \bar v_{k,0},
\frac{\B_n(a_i)-\B_n(t)}{\sqrt{a_i}}\bigr\rangle-h(k,1),\Bigr.\\
&\hspace{0.5cm}\max\limits_{t\in I^i_{k}\setminus I^i_{k-1}}\bigl\langle \bar v_{k,0},
\frac{\B_n(a_i)-w_t}{\sqrt{a_i}}\bigr\rangle-h(k,1)
+\max\limits_{t\in I^i_k\setminus I^i_{k-1}}\bigl\langle\bar v_{k,0},\frac{-u_t}{\sqrt{a_i}}\bigr\rangle,\Bigr.\\
\Bigl.&\hspace{0.5cm}\bigl\langle \bar v_{k,0},\frac{\B_n(a_i)-\B_n(a_{i+1})}{\sqrt{a_{i+1}}}\bigr\rangle+f(k,1)\Bigr)\\
&\le\max\Bigl(0,
\max\limits_{t\in I^i_{k-1}}\bigl\langle \bar v_{k,0},
\frac{\B_n(a_i)-\B_n(t)}{\sqrt{a_i}}\bigr\rangle-h(k,0),\Bigr.\\
&\hspace{0.5cm}\Bigl.\bigl\langle 2\bar v_{k,0},\frac{\B_n(a_i)-\B_n(a_{i+1})}{\sqrt{a_{i+1}}}\bigr\rangle+2f(k,0)\Bigr)\\
&\hspace{0.5cm}+\max\Bigl(0,\max\limits_{t\in I^i_k\setminus I^i_{k-1}}\bigl\langle\bar v_{k,0},\frac{-u_t}{\sqrt{a_i}}\bigr\rangle+h(k,0)-h(k,1)\Bigr).
\end{align*}
Let us denote the first summand in the last estimate by $\xi_i$, so that
$$\BStat_i(k,1)\le \xi_i+\max\Bigl(0,\max\limits_{t\in I^i_k\setminus I^i_{k-1}}\bigl\langle\bar v_{k,0},
\frac{-u_t}{\sqrt{a_i}}\bigr\rangle+h(k,0)-h(k,1)\Bigr).$$
Note that
\begin{equation}\label{BStat lem aux 1}
\Event_{k-1}=\bigl\{\xi_i=0\mbox{ for all }i=0,1,\dots,N\bigr\}.
\end{equation}
Further, the property \eqref{nk prop 2} of the vector $\bar v_{k,0}=\bar v_{k-1}$, together with Lemma~\ref{brownian-segment-split}
and the independence of the Brownian motion on disjoint intervals, imply that
the Gaussian variables $\bigl\langle\bar v_{k,0},\frac{-u_t}{\sqrt{a_i}}\bigr\rangle$
are jointly independent for $t\in I^i_k\setminus I^i_{k-1}$, $i=1,2,\dots,N$, and the variance of each one
can be estimated from above by $2^{1-k}$. Thus, the vector $\BStat(k,1)$ can be majorized coordinate-wise
by the vector
$$\bigl(\xi_i+\max\limits_{t\in I^i_k\setminus I^i_{k-1}}(0,2^{(1-k)/2}g_{t}+h(k,0)-h(k,1))\bigr)_{i=0}^N,$$
where $g_t$ ($t\in I^i_k\setminus I^i_{k-1}$, $i=0,1,\dots,N$) are i.i.d.\ standard Gaussians
(in fact, appropriate scalar multiples of $\bigl\langle\bar v_{k,0},\frac{-u_t}{\sqrt{a_i}}\bigr\rangle$).
Denoting by $g$ the standard Gaussian variable, we get from the definition of $h$:
\begin{align*}
\P\bigl\{\max\limits_{t\in I^i_k\setminus I^i_{k-1}}(0,2^{(1-k)/2}g_{t}+h(k,0)-h(k,1))>0\bigr\}
&\le 2^k\P\{g>C_h2^{k/4}/2\}\\
&\le 2^k\exp(-{C_h}^2 2^{k/2}/8)\\
&\le\frac{1}{2}\exp(-{C_h}^2 2^{k/2}/16).
\end{align*}
(In the last two inequalities, we assumed that $C_h$ is sufficiently large).
Applying Hoeffding's inequality to corresponding indicators, we infer
$$|\BadBlocks(k,1)|\le |\{i:\,\xi_i\neq 0\}|+N\exp(-{C_h}^2 2^{k/2}/16)$$
with probability at least $1-\exp(-2\sqrt{N})$ (we note that, in view of the inequality $k\le M$, we have
$\frac{1}{2}\exp(-{C_h}^2 2^{k/2}/16)\ge N^{-1/4}$).
Next, it is not hard to see that the Euclidean norm of $\BStat(k,1)$ is majorized (deterministically) by the sum
$$\bigl\|(\xi_i)_{i=0}^N\bigr\|+2^{(1-k)/2}\bigl\|\bigl(\max(0,g_t-C_h2^{k/4}/2)\bigr)_{t}\bigr\|,$$
with the second vector having $\sum_{i=0}^N|I^i_k\setminus I^i_{k-1}|\le 2^kN$ coordinates.
Applying Lemma~\ref{norm of noncentered gaussian} to the second vector
(note that for sufficiently large $n$ we have $C_h2^{k/4}/2\le\sqrt{\ln N}$), we get
$$\|\BStat(k,1)\|\le \bigl\|(\xi_i)_{i=0}^N\bigr\|+\frac{8 \sqrt{N}}{\exp({C_h}^2 2^{k/2}/32)}$$
with probability at least $1-\exp(-2\sqrt{N})$.
Combining the estimates with \eqref{BStat lem aux 1}, we obtain the result.
\end{proof}

\begin{lemma}[Subsequent substeps for block statistics]\label{BStat subsequent substeps lemma}
Fix any $1\le k\le M$ and $1<\ell\le M'+1$ and assume that the random unit vectors
$\bar v_{k,\ell-2}$ and $\bar \Delta_{k,\ell-1}$ satisfying properties \eqref{nkl prop 1}---\eqref{nkl prop 2}
and \eqref{deltakl prop 1}---\eqref{deltakl prop 2}---\eqref{deltakl prop 3}, respectively,
are constructed, and $\bar v_{k,\ell-1}$ is defined according to formula \eqref{def of nkl via delta}. Then
\begin{align*}
\P&\Bigl\{|\BadBlocks(k,\ell)|\le N\exp(-{C_h}^2 2^{(k+\ell)/2})\mbox{ and }
\|\BStat(k,\ell)\|\le\frac{\sqrt{N}}{\exp({C_h}^2 2^{(k+\ell)/2})}\Bigr\}\\
&\ge \P(\Event_{k,\ell-1})-2\exp(-2\sqrt{N}).
\end{align*}
Moreover,
$$\P\bigl\{\BadBlocks(k,\ell)\neq\emptyset\bigr\}\le N\exp(-{C_h}^2/\alpha_{k,\ell-1})+1-\P(\Event_{k,\ell-1}).$$
\end{lemma}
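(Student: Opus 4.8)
The plan is to exploit the following dichotomy. By definition of $\Event_{k,\ell-1}$, on that event $\BStat_i(k,\ell)=0$ for every $i\in\BadBlocks(k,\ell-1)$; hence on $\Event_{k,\ell-1}$ the set $\BadBlocks(k,\ell)$ is contained in the set of indices $i$ with $\BStat_i(k,\ell-1)=0$. So it suffices to control, for such ``previously good'' blocks, the probability that passing from $\bar v_{k,\ell-2}$ to $\bar v_{k,\ell-1}$ --- which by \eqref{def of nkl via delta} is the addition of the minuscule perturbation $\alpha_{k,\ell-1}\bar\Delta_{k,\ell-1}$ followed by renormalization --- drives one of the relevant inner products past the tightened thresholds $h(k,\ell)>h(k,\ell-1)$ and $f(k,\ell)<f(k,\ell-1)$. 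Concretely, fixing $i$ with $\BStat_i(k,\ell-1)=0$, the bound $\langle\bar v_{k,\ell-2},(\B_n(a_i)-\B_n(t))/\sqrt{a_i}\rangle\le h(k,\ell-1)$ (and its $f$-analogue), together with $\tfrac{x}{\sqrt{1+\alpha^2}}\le x$ for $x\ge0$, the increments $h(k,\ell)-h(k,\ell-1)=C_h2^{-(k+\ell)/4}$, $f(k,\ell-1)-f(k,\ell)=C_f2^{-(k+\ell)/4}$, the trivial bound $f(k,\ell-1)\le f(1,0)$, and $\alpha_{k,\ell-1}=16^{-(k+\ell-1)}$ (making the correction $f(k,\ell-1)\alpha_{k,\ell-1}^2$ negligible against $C_f2^{-(k+\ell)/4}$), yields the deterministic estimate: on $\Event_{k,\ell-1}$ and for $i\notin\BadBlocks(k,\ell-1)$,
\begin{equation}\label{plan-det-bound}
\BStat_i(k,\ell)\le\alpha_{k,\ell-1}\max_{t}\Bigl(|\langle\bar\Delta_{k,\ell-1},\B_n(t)-\B_n(a_i)\rangle|/\sqrt{a_i}-r_{k,\ell}\Bigr)_{+},\quad r_{k,\ell}:=\tfrac{C_h}{2}\,16^{\,k+\ell-1}2^{-(k+\ell)/4},
\end{equation}
the maximum being over $t\in I^i_k\cup\{a_{i+1}\}$ (for $i=0$ only the term $t=a_1$ occurs).

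Next I would handle the probabilistic step by careful conditioning. Fix a set $I$ with $\P\{\BadBlocks(k,\ell-1)=I\}>0$, condition on $\{\BadBlocks(k,\ell-1)=I\}$, and then condition further on $\bar\Delta_{k,\ell-1}$ and on all coordinates of $\B_n(\cdot)$ lying outside $\R^{J^k_{\ell-1}}$ (these determine $\bar v_{k,\ell-2}$, hence the event $\{\BadBlocks(k,\ell-1)=I\}$). By property~\eqref{deltakl prop 3}, conditionally on $\{\BadBlocks(k,\ell-1)=I\}$ the vector $\bar\Delta_{k,\ell-1}$ is independent of $\{\Proj^k_{\ell-1}(\B_n(t)-\B_n(a_i)):t\in I^i_k\cup\{a_{i+1}\},\ i\notin I\}$; since $\bar\Delta_{k,\ell-1}\in\R^{J^k_{\ell-1}}$, after the extra conditioning the variables $\langle\bar\Delta_{k,\ell-1},\B_n(t)-\B_n(a_i)\rangle$, $t\in I^i_k\cup\{a_{i+1}\}$, $i\notin I$, are centered Gaussian of variance $\le|t-a_i|\le a_{i+1}$ and independent across distinct $i$. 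Inserting this into \eqref{plan-det-bound} and using $|I^i_k|<2^k$, the conditional probability that a fixed $i\notin I$ belongs to $\BadBlocks(k,\ell)$ is at most $2^{k+1}\exp(-r_{k,\ell}^2/2)$; because $16=2^4$ makes $r_{k,\ell}^2=\tfrac{C_h^2}{4}2^{(15/2)(k+\ell)-8}$ huge next to $\log(2^{k+1})$ and next to the target exponents below, uniformly for $1\le k\le M$, $1<\ell\le M'+1$, this is at most both $\tfrac12\exp(-C_h^2 2^{(k+\ell)/2})$ and $\exp(-C_h^2/\alpha_{k,\ell-1})$.

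Finally I would assemble the conclusions. For the ``moreover'' part: since $\{\BadBlocks(k,\ell)\ne\emptyset\}\subseteq\Event_{k,\ell-1}^c\cup\bigcup_{i\notin\BadBlocks(k,\ell-1)}\{i\in\BadBlocks(k,\ell)\}$, a union bound over the $N+1$ values of $i$ gives $\P\{\BadBlocks(k,\ell)\ne\emptyset\}\le(N+1)\exp(-C_h^2/\alpha_{k,\ell-1})+1-\P(\Event_{k,\ell-1})$, which is the claim (replacing $N+1$ by $N$ is harmless). For the first displayed inequality: on $\Event_{k,\ell-1}$ one has $|\BadBlocks(k,\ell)|\le\sum_{i\notin\BadBlocks(k,\ell-1)}\mathbf1\{i\in\BadBlocks(k,\ell)\}$, and conditionally on the $\sigma$-algebra above these indicators are independent with mean $\le\tfrac12\exp(-C_h^2 2^{(k+\ell)/2})$, so Hoeffding's inequality~\cite{MR0144363} bounds the sum by $N\exp(-C_h^2 2^{(k+\ell)/2})$ off a conditional event of probability $\le\exp(-2\sqrt N)$ (using $2^{(k+\ell)/2}\le\sqrt2(\ln n)^{5/8}$, so that $\exp(C_h^2 2^{(k+\ell)/2})=n^{o(1)}\ll\sqrt N$). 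For the norm bound, \eqref{plan-det-bound} gives, on $\Event_{k,\ell-1}$, $\|\BStat(k,\ell)\|^2\le\alpha_{k,\ell-1}^2\|b\|^2$ with $b$ the vector of entries $(|D_j|-r_{k,\ell})_+$ over the $\le q:=2^kN$ increments $j$ of the blocks $i\notin\BadBlocks(k,\ell-1)$, each $D_j$ a conditionally centered Gaussian of variance $\le1$; writing $|D_j|=\max(D_j,-D_j)$, using $(|D_j|-r_{k,\ell})_+\le(|D_j|-r)_+$ for $r:=\min(r_{k,\ell},\sqrt{\ln q})$, and applying Lemma~\ref{norm of noncentered gaussian} twice (valid since $e\le r\le\sqrt{\ln q}$ for $n$ large), one obtains $\|b\|\le8\sqrt q\,\exp(-\tfrac18\min(r_{k,\ell}^2,\ln q))$ off a conditional event of probability $\le2\exp(-2\sqrt q)\le2\exp(-2\sqrt N)$; the calibration of $\alpha_{k,\ell}=16^{-(k+\ell)}$, $|J^k_\ell|\asymp n2^{-(k+\ell)/8}$, $M=\log_2\ln n$ and $M'=\tfrac14\log_2\ln n$ is precisely what makes $8\alpha_{k,\ell-1}\sqrt q\,\exp(-\tfrac18\min(r_{k,\ell}^2,\ln q))\le\sqrt N\exp(-C_h^2 2^{(k+\ell)/2})$ for all admissible $k,\ell$ and $n$ large. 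Taking expectation over the conditioning and summing over $I$ completes both parts.

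The main obstacle is keeping the probabilistic dependencies straight in the second step: $\bar\Delta_{k,\ell-1}$ is constructed \emph{from} the Brownian path in order to repair the blocks in $\BadBlocks(k,\ell-1)$, so it is only conditionally independent of the good blocks' increments; one must condition on the random identity $I=\BadBlocks(k,\ell-1)$ and on enough of the path to freeze $\bar v_{k,\ell-2}$ before the increments $\Proj^k_{\ell-1}(\B_n(t)-\B_n(a_i))$ of the good blocks become genuine fresh Gaussians, all while remaining compatible with \eqref{plan-det-bound} and with independence across blocks. A secondary, purely computational difficulty is checking that the geometric rates ($16^{-(k+\ell)}$ for $\alpha$, $2^{-(k+\ell)/8}$ for $|J^k_\ell|$, $2^{-(k+\ell)/4}$ for the threshold increments), together with the ranges $k\le M$, $\ell\le M'+1$, simultaneously place $r=\min(r_{k,\ell},\sqrt{\ln q})$ in the window demanded by Lemma~\ref{norm of noncentered gaussian} and make $\tfrac18\min(r_{k,\ell}^2,\ln q)$ large enough to dominate $C_h^2 2^{(k+\ell)/2}$ and $C_h^2/\alpha_{k,\ell-1}$ uniformly.
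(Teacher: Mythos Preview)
Your approach is essentially the paper's: both condition on $\{\BadBlocks(k,\ell-1)=I\}$, split $\BStat_i(k,\ell)$ into the previous statistic (which vanishes for $i\notin I$) plus a perturbation term driven by $\alpha_{k,\ell-1}\bar\Delta_{k,\ell-1}$, bound the per-block probability of becoming bad by a Gaussian tail, and conclude via Hoeffding for the count and Lemma~\ref{norm of noncentered gaussian} for the norm. The ``moreover'' part and the count estimate go through as you wrote them.

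There is one genuine gap in your norm bound. The variables $D_{i,t}=\langle\bar\Delta_{k,\ell-1},\B_n(t)-\B_n(a_i)\rangle/\sqrt{a_i}$ are \emph{not} jointly independent: for $t_1<t_2$ in the same block $i$ one has $\mathrm{Cov}(D_{i,t_1},D_{i,t_2})=(t_1-a_i)/a_i>0$. Lemma~\ref{norm of noncentered gaussian}, however, requires the underlying Gaussians to be independent --- its proof factorizes $\E e^{\lambda\|b\|^2}=\prod_j\E e^{\lambda b_j^2}$ --- so you cannot apply it directly to your vector $b$ of $(|D_j|-r)_+$'s. (Positive correlation in fact \emph{increases} the MGF, so the issue is not merely cosmetic.) The paper resolves this by a stochastic comparison at the block level: for $i\notin I$,
\[
\P\bigl\{\eta_i^2\ge\tau\mid\BadBlocks(k,\ell-1)=I\bigr\}
\le 2^k\,\P\bigl\{(\alpha g-C_h2^{-(k+\ell)/4})_+^2\ge\tau\bigr\}
\le \P\Bigl\{\sum_{j=1}^{2^{k+1}}(\alpha g_j-C_h2^{-(k+\ell)/4})_+^2\ge\tau\Bigr\},
\]
the last inequality via $2^kp\le 1-(1-p)^{2^{k+1}}$ and the trivial bound $\max\le\sum$. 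This replaces each block's dependent maximum by a sum of $2^{k+1}$ \emph{independent} truncated-Gaussian squares; together with the genuine independence across blocks, $\|(\eta_i)_{i\notin I}\|$ is then stochastically dominated by $\alpha\bigl\|(\max(0,g_j-4C_h2^{(k+\ell)/4}))_{j\le 2^{k+1}N}\bigr\|$ with i.i.d.\ standard Gaussians $g_j$, to which Lemma~\ref{norm of noncentered gaussian} legitimately applies. Your argument can be repaired by inserting this comparison, or alternatively by applying H\"older within each block of size $2^k$ to reduce to single-variable MGF bounds (the extra factor $2^k$ in the exponent is harmless for the numerics you already checked).
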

\begin{proof}
To shorten the notation, we will use $\alpha$ in place of $\alpha_{k,\ell-1}$ within the proof.
Using the definition of $\bar v_{k,\ell-1}$ in terms of $\bar v_{k,\ell-2}$ and $\bar \Delta_{k,\ell-1}$, we get
for every $i=0,1,\dots,N$
\begin{align*}
\BStat_i&(k,\ell)=\max\Bigl(0,
\max\limits_{t\in I^i_k}\bigl\langle \frac{\bar v_{k,\ell-2}+\alpha\bar\Delta_{k,\ell-1}}{\sqrt{1+\alpha^2}},
\frac{\B_n(a_i)-\B_n(t)}{\sqrt{a_i}}\bigr\rangle-h(k,\ell),\Bigr.\\
&\hspace{2cm}\Bigl.\bigl\langle \frac{\bar v_{k,\ell-2}
+\alpha\bar\Delta_{k,\ell-1}}{\sqrt{1+\alpha^2}},\frac{\B_n(a_i)-\B_n(a_{i+1})}{\sqrt{a_{i+1}}}\bigr\rangle+f(k,\ell)\Bigr)\\
&\le\frac{\BStat_i(k,\ell-1)}{\sqrt{1+\alpha^2}}\\
&\hspace{0.3cm}+\max\Bigl(0,\max\limits_{t\in I^i_k}\bigl\langle \alpha\bar\Delta_{k,\ell-1},
\frac{\B_n(a_i)-\B_n(t)}{\sqrt{a_i}}\bigr\rangle+h(k,\ell-1)-h(k,\ell),\Bigr.\\
&\hspace{1cm}\Bigl.\bigl\langle \alpha\bar\Delta_{k,\ell-1},\frac{\B_n(a_i)-\B_n(a_{i+1})}{\sqrt{a_{i+1}}}\bigr\rangle+\sqrt{1+\alpha^2}f(k,\ell)-
f(k,\ell-1)\Bigr).
\end{align*}
Let us denote the second summand by $\eta_i$ so that
$$\BStat_i(k,\ell)\le\frac{\BStat_i(k,\ell-1)}{\sqrt{1+\alpha^2}}+\eta_i.$$
Fix for a moment any subset $I$ of $\{0,1,\dots,N\}$ such that $\P\{\BadBlocks(k,\ell-1)=I\}>0$.
A crucial observation is that, conditioned on the event $\BadBlocks(k,\ell-1)=I$, the variables
$\eta_i$, $i\notin I$, are jointly independent. This follows from properties \eqref{deltakl prop 1}, \eqref{deltakl prop 3}
of $\bar\Delta_{k,\ell-1}$ and from independence of the Brownian motion on disjoint intervals.
Next, the same properties tell us that, conditioned on $\BadBlocks(k,\ell-1)=I$,
each variable $\langle\bar\Delta_{k,\ell-1},\frac{\B_n(a_i)-\B_n(t)}{\sqrt{a_i}}\rangle$, $t\in I^i_k$,
and $\langle\bar\Delta_{k,\ell-1},\frac{\B_n(a_i)-\B_n(a_{i+1})}{\sqrt{a_{i+1}}}\rangle$
have Gaussian distributions with variances at most $1$.
Further, note that, by the choice of $\alpha$ and the functions $f$ and $h$,
we have
$$\sqrt{1+\alpha^2}f(k,\ell)-f(k,\ell-1)\le h(k,\ell-1)-h(k,\ell)=-C_h 2^{(-k-\ell)/4}.$$
Thus, denoting by $g$ the standard Gaussian variable, we get
\begin{align}
\P\{\eta_i>0\,|\,\BadBlocks(k,\ell-1)=I\}&\le 2^k\P\{g>\alpha^{-1}C_h 2^{(-k-\ell)/4}\}\nonumber\\
&\le \frac{1}{2}\exp(-{C_h}^2\alpha^{-1}),\;\;i\in\{0,1,\dots,N\}\setminus I.\label{aux 167}
\end{align}
Hence, by Hoeffding's inequality (note that $\exp(-{C_h}^2 2^{(k+\ell)/2})> 2N^{-1/4}$):
$$\P\bigl\{|\{i\notin I:\,\eta_i>0\}|\ge N\exp(-{C_h}^2 2^{(k+\ell)/2})\,|\,\BadBlocks(k,\ell-1)=I\bigr\}\le\exp(-2\sqrt{N}).$$
Next, it is not difficult to see that for any $\tau>0$ and $i\notin I$
\begin{align*}
\P&\{\eta_i^2\ge\tau\,|\,\BadBlocks(k,\ell-1)=I\}\\
&\le 2^k\P\{\max(0,\alpha g-C_h 2^{(-k-\ell)/4})^2\ge\tau\}\\
&\le 1-\exp\bigl(-2^{k+1}\P\{\max(0,\alpha g-C_h 2^{(-k-\ell)/4})^2\ge\tau\}\bigr)\\
&\le 1-\P\bigl\{\max(0,\alpha g-C_h 2^{(-k-\ell)/4})^2<\tau\bigr\}^{2^{k+1}}\\
&\le \P\Bigl\{\sum\limits_{j=1}^{2^{k+1}}\max(0,\alpha g_j-C_h 2^{(-k-\ell)/4})^2\ge\tau\Bigr\}\\
&\le\P\Bigl\{\sum\limits_{j=1}^{2^{k+1}}\max(0,\alpha g_j-4\alpha C_h 2^{(k+\ell)/4})^2\ge\tau\Bigr\},
\end{align*}
where $ g_j$ ($j=1,2,\dots,2^{k+1}$) are i.i.d.\ copies of $ g$. Hence, the conditional cdf of $\|(\eta_i)_{i\notin I}\|$
given $\BadBlocks(k,\ell-1)=I$ majorizes the cdf of
$$\alpha\bigl\|\bigl(\max(0, g_j-4C_h 2^{(k+\ell)/4})\bigr)_{j=1}^{2^{k+1}N}\bigr\|=:\alpha Z$$
for i.i.d.\ standard Gaussians $ g_j$, $j=1,2,\dots,2^{k+1}N$. Applying Lemma~\ref{norm of noncentered gaussian}
(note that $4C_h 2^{(k+\ell)/4}\le\sqrt{\ln N}$),
we obtain
\begin{align*}
\P&\Bigl\{\|(\eta_i)_{i\notin I}\|>\frac{\sqrt{N}}{\exp({C_h}^2 2^{(k+\ell)/2})}\,\bigl|\bigr.\,\BadBlocks(k,\ell-1)=I\Bigr\}\\
&\le\P\Bigl\{Z>
\frac{\alpha^{-1}\sqrt{N}}{\exp({C_h}^2 2^{(k+\ell)/2})}\,\bigl|\bigr.\,\BadBlocks(k,\ell-1)=I\Bigr\}\\
&\le\P\Bigl\{Z>
\frac{4\sqrt{2^{k+1}N}}{\exp(2{C_h}^2 2^{(k+\ell)/2})}\,\bigl|\bigr.\,\BadBlocks(k,\ell-1)=I\Bigr\}\\
&\le \exp(-2\sqrt{N}).
\end{align*}
Clearly $\BStat_i(k,\ell-1)=0$ for all $i\notin I$ given $\BadBlocks(k,\ell-1)=I$.
Hence, the above estimates give
\begin{align*}
\P\Bigl\{&|\BadBlocks(k,\ell)|\ge N\exp(-{C_h}^2 2^{(k+\ell)/2})\Bigr.\\
\Bigl.&\mbox{or }\|\BStat(k,\ell)\|>\frac{\sqrt{N}}{\exp({C_h}^2 2^{(k+\ell)/2})}\,\bigl|\bigr.\,\BadBlocks(k,\ell-1)=I\Bigr\}\\
&\le \P\bigl\{\BStat_i(k,\ell)>0\mbox{ for some }i\in I\,|\,\BadBlocks(k,\ell-1)=I\bigr\}+2\exp(-2\sqrt{N}).
\end{align*}
Summing over all admissible subsets $I$, we get
\begin{align*}
\P&\Bigl\{|\BadBlocks(k,\ell)|\ge N\exp(-{C_h}^2 2^{(k+\ell)/2})
\mbox{ or }\|\BStat(k,\ell)\|>\frac{\sqrt{N}}{\exp({C_h}^2 2^{(k+\ell)/2})}\Bigr\}\\
&\le 2\exp(-2\sqrt{N})\\
&\hspace{0.5cm}+\sum\limits_I \P\bigl\{\BStat_i(k,\ell)>0\mbox{ for some }i\in I\,|\,\BadBlocks(k,\ell-1)=I\bigr\}\P\{\BadBlocks(k,\ell-1)=I\}\\
&=2\exp(-2\sqrt{N})+\P\bigl\{\BStat_i(k,\ell)>0\mbox{ for some }i\in \BadBlocks(k,\ell-1)\bigr\}\\
&=2\exp(-2\sqrt{N})+1-\P(\Event_{k,\ell-1}).
\end{align*}
By analogous argument, as a corollary of \eqref{aux 167},
$$\P\bigl\{\BadBlocks(k,\ell)\neq\emptyset\bigr\}\le N\exp(-{C_h}^2\alpha^{-1})+1-\P(\Event_{k,\ell-1}).$$
\end{proof}

The next lemma, which is the heart of the proof,
provides a construction procedure for the perturbation $\bar\Delta_{k,\ell}$. 
Given vector $\bar v_{k,\ell-1}$, we examine its block statistics $\BStat(k,\ell)$,
and define the perturbation in such a way that its inner product with
increments of the Brownian motion is large on bad blocks $\BadBlocks(k,\ell)$
(in fact, it will be proportional to the values of corresponding $\BStat_i(k,\ell)$),
and random on other blocks.
This is achieved using Lemma~\ref{normal-vector-lem}.

\begin{lemma}[Construction of $\bar\Delta_{k,\ell}$]\label{construction of deltakl}
Let $1\le k\le M$ and $1\le \ell\le M'$ and assume that the random unit vector
$\bar{v}_{k,\ell-1}$ satisfying properties \eqref{nkl prop 1} and \eqref{nkl prop 2}
has been constructed.
Then one can construct a random unit vector $\bar \Delta_{k,\ell}$ satisfying properties
\eqref{deltakl prop 1}---\eqref{deltakl prop 2}---\eqref{deltakl prop 3} and such that
\begin{equation*}
\begin{split}
\P(\Event_{k,\ell})\ge \P(\Event_{k,\ell-1})-3\exp(-\sqrt{N})\;\;\;\;\;\;&\mbox{if $\ell>1$, or}\\
\P(\Event_{k,\ell})\ge \P(\Event_{k-1})-3\exp(-\sqrt{N})\;\;\;\;\;\;&\mbox{if $\ell=1$.}
\end{split}
\end{equation*}
\end{lemma}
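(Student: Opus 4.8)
The plan is to apply Lemma~\ref{normal-vector-lem} on the coordinate subspace $\R^{J^k_\ell}$, using the Brownian increments along that subspace as the independent Gaussian vectors and the (normalized) block statistics $\BStat(k,\ell)$ as the prescribed direction $b$. First I would condition on a realization of $\BadBlocks(k,\ell)=I$ for each admissible $I$; by property \eqref{nkl prop 2} the statistics $\BStat(k,\ell)$ are measurable with respect to the $\sigma$-algebra generated by $\Proj^p_q(\B_n(t))$ for $(p,q)\precsim(k,\ell-1)$, which is independent of the family $\{\Proj^k_\ell(\B_n(t)-\B_n(a_i)):t\in I^i_k\cup\{a_{i+1}\}\}$. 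For $i\in I$, the relevant increments $\Proj^k_\ell(\B_n(a_{i+1})-\B_n(a_i))$ (rescaled by $\sqrt{a_{i+1}}$, or the splitting increments from Lemma~\ref{brownian-segment-split} for the midpoints in $I^i_k$) are, up to normalization, independent standard Gaussian vectors in $\R^{J^k_\ell}$; since $|J^k_\ell|=c'n2^{-(k+\ell)/8}$ and $|I|\le|\BadBlocks(k,\ell)|$ is at most $N\exp(-C_h^2 2^{(k+\ell)/2})\ll |J^k_\ell|/2$ on the relevant event (by Lemma~\ref{BStat initial substep lemma} or Lemma~\ref{BStat subsequent substeps lemma}), the hypothesis $m\le d/2$ of Lemma~\ref{normal-vector-lem} is satisfied. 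Apply that lemma with $b=\BStat(k,\ell)/\|\BStat(k,\ell)\|$ restricted to $I$ to obtain a unit vector $\bar u_b\in\R^{J^k_\ell}$, and set $\bar\Delta_{k,\ell}:=\bar u_b$; extend it measurably (say $\bar\Delta_{k,\ell}:=e_j$ for some fixed $j\in J^k_\ell$) on the negligible event where the statistics vanish or the cardinality bound fails.

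Properties \eqref{deltakl prop 1} and \eqref{deltakl prop 2} are immediate from the construction: $\bar u_b$ lies in $\spn$ of the $\Proj^k_\ell$-increments, which are measurable with respect to the stated $\sigma$-algebra, and $b$ is measurable with respect to the earlier substeps. Property \eqref{deltakl prop 3} follows because, conditioned on $\{\BadBlocks(k,\ell)=I\}$, the vector $\bar u_b$ is a Borel function of $\{\Proj^k_\ell(\B_n(t)-\B_n(a_i)):i\in I\}$ and of $b$, all of which are independent of the increments indexed by $i\notin I$ (here again Lemma~\ref{brownian-segment-split} and independence of Brownian motion on disjoint intervals are used to handle the midpoint increments). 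For the probability estimate, note that on the event from Lemma~\ref{normal-vector-lem}, for each $i\in I$ we get
$$\langle \bar\Delta_{k,\ell},\B_n(a_{i+1})-\B_n(a_i)\rangle \ge c_{\ref{normal-vector-lem}}\sqrt{|J^k_\ell|}\,\frac{\BStat_i(k,\ell)}{\|\BStat(k,\ell)\|}\sqrt{a_{i+1}},$$
and similarly for the midpoint terms; multiplying by $\alpha_{k,\ell}=16^{-k-\ell}$ and recalling that $f(k,\ell)-f(k,\ell+1)=h(k,\ell+1)-h(k,\ell)=C_f 2^{-(k+\ell+1)/4}$ up to constants, one checks that the gain $\alpha_{k,\ell}c_{\ref{normal-vector-lem}}\sqrt{|J^k_\ell|}$ dominates $\BStat_i(k,\ell)$ whenever $\BStat_i(k,\ell)\le \sqrt{N}/\exp(C_h^2 2^{(k+\ell)/2})$, which forces $\BStat_i(k,\ell+1)=0$. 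Thus $\Event_{k,\ell}$ contains the intersection of the Lemma~\ref{normal-vector-lem} event, the good event from Lemma~\ref{BStat initial substep lemma} (if $\ell=1$) or Lemma~\ref{BStat subsequent substeps lemma} (if $\ell>1$), and (for $\ell>1$) the event $\Event_{k,\ell-1}$; a union bound over the failure probabilities $\exp(-c_{\ref{normal-vector-lem}}|J^k_\ell|)$ and $2\exp(-2\sqrt N)$, all of which are at most $\exp(-\sqrt N)$ for large $n$, yields the claimed bound $\P(\Event_{k,\ell})\ge \P(\Event_{k,\ell-1})-3\exp(-\sqrt N)$ (resp.\ with $\P(\Event_{k-1})$ when $\ell=1$).

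The main obstacle I expect is the bookkeeping for property \eqref{deltakl prop 3}: one must verify carefully that conditioning on $\{\BadBlocks(k,\ell)=I\}$ does not destroy the independence between $\bar\Delta_{k,\ell}$ and the increments on good blocks. The subtlety is that $\BadBlocks(k,\ell)$ is determined by $\bar v_{k,\ell-1}$ together with \emph{all} the Brownian increments up through scale $k$, including those projected onto $\R^{J^k_\ell}$ on bad blocks; so one must be precise that, given $\{\BadBlocks(k,\ell)=I\}$, $\bar\Delta_{k,\ell}$ depends on the $\Proj^k_\ell$-increments only through those indexed by $i\in I$, and that the $\Proj^k_\ell$-increments on blocks $i\notin I$ remain (conditionally) i.i.d.\ Gaussian and independent of everything used to build $\bar\Delta_{k,\ell}$. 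Once this conditional-independence structure is nailed down, Lemma~\ref{BStat subsequent substeps lemma} can be invoked at the next substep, and the rest is routine.
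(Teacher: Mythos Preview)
Your overall approach is the same as the paper's: condition on $\{\BadBlocks(k,\ell)=I\}$, build $\bar\Delta_{k,\ell}$ on $\R^{J^k_\ell}$ via Lemma~\ref{normal-vector-lem} using the fresh $\Proj^k_\ell$-increments, glue, and combine with Lemmas~\ref{BStat initial substep lemma}/\ref{BStat subsequent substeps lemma}. Your discussion of the conditional independence needed for \eqref{deltakl prop 3} is also on target and matches how the paper argues it.

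The genuine gap is the treatment of the midpoints $t\in I^i_k$. You apply Lemma~\ref{normal-vector-lem} with $b=\BStat(k,\ell)/\|\BStat(k,\ell)\|$ restricted to $I$, i.e.\ with $m=|I|$ Gaussian vectors, one (the full block increment) per bad block; you then write ``and similarly for the midpoint terms''. But to kill $\BStat_i(k,\ell+1)$ you must ensure $\langle\bar\Delta_{k,\ell},\B_n(t)-\B_n(a_i)\rangle$ is large and \emph{positive} for \emph{every} $t\in I^i_k$, not just for $t=a_{i+1}$. The vectors $\B_n(t)-\B_n(a_i)$ for $t\in I^i_k$ are not independent of the block increment (or of each other), so a control on the block increment alone says nothing about them. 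Your alternative of using ``the splitting increments from Lemma~\ref{brownian-segment-split}'' does not fix this either: for a fixed block the bridge components $u_t$ at different midpoints $t\in I^i_k$ are all part of the \emph{same} Brownian bridge and hence are mutually dependent, so you cannot feed them as the independent $X_j$'s of Lemma~\ref{normal-vector-lem}.

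The paper's proof resolves this by using, for each $i\in I$, the $2^k$ \emph{consecutive} sub-increments $X_{i,p}=\Proj^k_\ell(\B_n(t_{i,p+1})-\B_n(t_{i,p}))/\sqrt{t_{i,p+1}-t_{i,p}}$ with $t_{i,p}=2^{i-1+p2^{-k}}$, and assigns the \emph{same} weight $\tilde b^I_{i,p}=2^{-k/2}\BStat_i(k,\ell)/\|\BStat(k,\ell)\|$ to all of them. These $X_{i,p}$ are genuinely independent standard Gaussians, Lemma~\ref{normal-vector-lem} makes every $\langle\bar\Delta_{k,\ell},X_{i,p}\rangle$ nonnegative, and then a telescoping sum gives $\langle\bar\Delta_{k,\ell},\B_n(t)-\B_n(a_i)\rangle\ge 0$ simultaneously for all $t\in I^i_k\cup\{a_{i+1}\}$. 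Correspondingly, the dimension check in Lemma~\ref{normal-vector-lem} must be $|T_I|\le 2^k|I|\le\tfrac12|J^k_\ell|$, not $|I|\le\tfrac12|J^k_\ell|$ as in your plan; the factor $2^k$ is absorbed because the bound on $|I|$ from Lemmas~\ref{BStat initial substep lemma}/\ref{BStat subsequent substeps lemma} is $N\exp(-C_h^2 2^{(k+\ell)/2}/32)$, which decays far faster in $k$ than $2^{-k}|J^k_\ell|$.
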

\begin{proof}
Fix for a moment any subset $I\subset\{0,1,\dots,N\}$ such that the event
$$\Gamma_I=\{\BadBlocks(k,\ell)=I\}$$
has a non-zero probability.
If $|I|>N\exp(-{C_h}^2 2^{(k+\ell)/2}/32)$ then define a random vector $\bar \Delta_{k,\ell}^I$
on $\Gamma_I$ by setting $\bar \Delta_{k,\ell}^I:=u$ for an arbitrary fixed unit vector $u\in\R^{J^k_\ell}$.
Otherwise, if $|I|\le N\exp(-{C_h}^2 2^{(k+\ell)/2}/32)$, we proceed as follows:

Define a set of double indices
$$
T_I:= \big\{(i,p):\, i\in I\setminus \{0\},\, p\in\{1,\ldots, 2^{k}-1\}\big\} \cup \bigcup_{i\in I}\{(i,0)\}.  
$$
For each $(i,p)\in T_I$, define an increment $X_{i,p}$ on the probability space $(\Gamma_I,\P(\cdot|\Gamma_I))$ by
$$
X_{i,p}:= \frac{\Proj_\ell^k\bigl(\B_n(t_{i,p+1})- \B_n(t_{i,p})\bigr)}{\sqrt{t_{i,p+1}-t_{i,p}}},
$$
where $t_{i,p}= 2^{i-1+p2^{-k}}$ for $p=0,1,\dots,2^k$ and $i\in I\setminus\{0\}$; additionally, if $0\in I$, then
$t_{0,1}=1$ and $t_{0,0}=0$.

Note that $\BStat(k,\ell)$ is measurable with respect to the $\sigma$-algebra generated by processes 
$\Proj_s^q \B_n(t)$, $(q,s)\precsim (k,\ell -1)$, where the notation ``$\precsim$'' is taken from 
\eqref{nkl prop 1}; see formula \eqref{def-statistics}. 
It implies that $\Proj^k_\ell(\B_n(t))$ (on $\Omega$) is independent from the event $\Gamma_I$; moreover,
considered on the space $(\Gamma_I,\P(\cdot|\Gamma_I))$, the set
$\{X_{i,p},\;(i,p)\in T_I\}$ is a collection of standard Gaussian vectors,
such that all $X_{i,p}$ and the vector $\BStat(k,\ell)$ are 
{\it jointly independent}.
Let us define a random vector $\tilde b^I\in\R^{T_I}$ on $(\Gamma_I,\P(\cdot|\Gamma_I))$ by
$$\tilde b^I_{i,p}=\begin{cases}2^{-k/2}\BStat_i(k,\ell)/\|\BStat(k,\ell)\|,&\mbox{if }\BStat(k,\ell)\neq {\bf 0};\\
0,&\mbox{otherwise.}\end{cases}$$
It is easy to see that $\|\tilde b^I\|\le 1$ (deterministically) and that
$$|T_I|\le 2^k|I|\le 2^k N\exp(-{C_h}^2 2^{(k+\ell)/2}/32)\le \frac{1}{2}|J^k_\ell|.$$
(In the last estimate, we used the assumption that $C_h$ is a large constant).
Hence, in view of Lemma~\ref{normal-vector-lem}, there exists a random unit vector
$\bar\Delta_{k,\ell}^I$ on the space $(\Gamma_I,\P(\cdot|\Gamma_I))$ with values in $\R^{J_\ell^k}$,
which is a Borel function of $X_{i,p}$ and $\tilde b^I$, and such that
\begin{align*}
\P\Bigl\{\langle \bar\Delta_{k,\ell}^I,X_{i,p}\rangle
\ge c_{\ref{normal-vector-lem}}\sqrt{|J_\ell^k|}\;\tilde b^I_{i,p}\mbox{ for all }(i,p)\in T_I\,|\,\Gamma_I\Bigr\}
&\ge 1-\exp(-c_{\ref{normal-vector-lem}}|J_\ell^k|)\\
&\ge 1-\exp(-\sqrt{N}).
\end{align*}
It will be convenient for us to denote by $\tilde \Gamma_I$ the event
$$\Bigl\{\langle \bar\Delta_{k,\ell}^I,X_{i,p}\rangle
\ge c_{\ref{normal-vector-lem}}\sqrt{|J_\ell^k|}\;\tilde b^I_{i,p}\mbox{ for all }(i,p)\in T_I\Bigr\}\subset\Gamma_I.$$

\medskip

By glueing together $\bar\Delta_{k,\ell}^I$ for all $I$,
we obtain a random vector $\bar\Delta_{k,\ell}$ defined on the entire probability space $\Omega$.

Clearly, $\bar\Delta_{k,\ell}$ satisfies properties \eqref{deltakl prop 1} and \eqref{deltakl prop 2}.
Next, on each $\Gamma_I$ with $\P(\Gamma_I)>0$ the vector $\bar\Delta_{k,\ell}$ was defined
as a Borel function of $\BStat(k,\ell)$ and $\Proj^k_\ell(\B(t)-\B(\tau))$, $t,\tau\in I^i_k\cup\{a_i,a_{i+1}\}$, $i\in I$, so,
in view of independence of the Brownian motion on disjoint intervals, $\bar\Delta_{k,\ell}$ satisfies \eqref{deltakl prop 3}.

Finally, we shall estimate the probability of $\Event_{k,\ell}$.  
Define
\begin{align*}
\Event=\Bigl\{&|\BadBlocks(k,\ell)|\le N\exp(-{C_h}^2 2^{(k+\ell)/2}/32)\mbox{ and }\\
&\|\BStat(k,\ell)\|\le\frac{\sqrt{N}}{\exp({C_h}^2 2^{(k+\ell)/2}/64)}\Bigr\}.
\end{align*}
Note that, according to Lemmas~\ref{BStat initial substep lemma} and~\ref{BStat subsequent substeps lemma},
the probability of $\Event$ can be estimated from below by $\P(\Event_{k,\ell-1})-2\exp(-2\sqrt{N})$
for $\ell>1$ and $\P(\Event_{k-1})-2\exp(-2\sqrt{N})$ for $\ell=1$.

Take any subset $I\subset\{0,1,\dots,N\}$ with $|I|\le N\exp(-{C_h}^2 2^{(k+\ell)/2}/32)$ and
such that $\tilde\Gamma_I\cap \Event\neq\emptyset$, and let $\omega\in\tilde \Gamma_I\cap\Event$.
If $\BadBlocks(k,\ell)=\emptyset$ at point $\omega$ then, obviously, $\omega\in\Event_{k,\ell}$.
Otherwise, we have
\begin{align*}
\bigl\langle &\bar\Delta_{k,\ell}(\omega),\frac{\B_n(t_{i,p+1})(\omega)- \B_n(t_{i,p})(\omega)}{\sqrt{t_{i,p+1}-t_{i,p}}}
\bigr\rangle\\
&\ge \frac{c_{\ref{normal-vector-lem}}2^{-k/2}\sqrt{|J_\ell^k|}\;\BStat_i(k,\ell)(\omega)}{\|\BStat(k,\ell)(\omega)\|}
\mbox{ for all }(i,p)\in T_I,
\end{align*}
whence, using the estimate $t_{i,p+1}-t_{i,p}\ge \frac{2^{i-k}}{4}$ ($(i,p)\in T_I$), we obtain for
any $i\in I$ and $t\in I^i_k\cup\{a_{i+1}\}$:
\begin{align*}
\bigl\langle &\bar\Delta_{k,\ell}(\omega),\B_n(t)(\omega)-\B_n(a_i)(\omega)\bigr\rangle\\
&=\sum_{p:\,t_{i,p}<t}  \bigl\langle \bar\Delta_{k,\ell}(\omega),\B_n(t_{i,p+1})(\omega)-\B_n(t_{i,p})(\omega)\bigr\rangle\\
&\ge \frac{c_{\ref{normal-vector-lem}}2^{-k-1}\sqrt{a_{i+1}|J_\ell^k|}\;\BStat_i(k,\ell)(\omega)}{\|\BStat(k,\ell)(\omega)\|}.
\end{align*}
Further,
$$\frac{c_{\ref{normal-vector-lem}}2^{-k-1}\sqrt{|J_\ell^k|}}{\|\BStat(k,\ell)(\omega)\|}
\ge \frac{c_{\ref{normal-vector-lem}}2^{-k-1}\sqrt{c' n 2^{(-k-\ell)/8}}\exp({C_h}^2 2^{(k+\ell)/2}/64)}{\sqrt{N}}\ge \frac{1}{\alpha_{k,\ell}}.
$$
Using the definition of $\bar v_{k,\ell}$ in terms of $\bar v_{k,\ell-1}$ and $\bar\Delta_{k,\ell}$ and the above estimates,
we get
\begin{align*}
\bigl\langle &\bar v_{k,\ell}(\omega),\frac{\B_n(t)(\omega)-\B_n(a_i)(\omega)}{\sqrt{a_i}}\bigr\rangle\\
&\ge
\frac{\alpha_{k,\ell}}{\sqrt{1+{\alpha_{k,\ell}}^2}}\bigl\langle \bar \Delta_{k,\ell}(\omega),
\frac{\B_n(t)(\omega)-\B_n(a_i)(\omega)}{\sqrt{a_i}}\bigr\rangle-
\frac{h(k,\ell)+\BStat_i(k,\ell)(\omega)}{\sqrt{1+{\alpha_{k,\ell}}^2}}\\
&\ge\frac{-h(k,\ell)}{\sqrt{1+{\alpha_{k,\ell}}^2}}\\
&\ge -h(k,\ell+1),\;\;t\in I^i_k,\;\;i\in I,
\end{align*}
and, similarly,
$$\bigl\langle \bar v_{k,\ell}(\omega),\frac{\B_n(a_{i+1})(\omega)-\B_n(a_i)(\omega)}{\sqrt{a_{i+1}}}\bigr\rangle
\ge \frac{f(k,\ell)}{{\sqrt{1+{\alpha_{k,\ell}}^2}}}\ge f(k,\ell+1),\;\;i\in I.$$
Thus, by the definition of the event $\Event_{k,\ell}$, we get $\omega\in\Event_{k,\ell}$.

The above argument shows that
$$\P(\Event_{k,\ell})\ge\sum\limits_I\P(\tilde \Gamma_I\cap\Event),$$
where the sum is taken over all $I$ with $|I|\le N\exp(-{C_h}^2 2^{(k+\ell)/2}/32)$.
Finally,
$$\sum\limits_I\P(\tilde \Gamma_I\cap\Event)\ge\sum\limits_{I}\P(\Gamma_I\cap\Event)
-\sum\limits_I\P(\Gamma_I\setminus\tilde\Gamma_I)\ge\P(\Event)-\exp(-\sqrt{N}),$$
and we get the result.
\end{proof}

\begin{proof}[Proof of Lemma~\ref{k step lemma}]
As before, we set $\bar v_{k,0}:=\bar v_{k-1}$. Consecutively applying Lemma~\ref{construction of deltakl}
and formula \eqref{def of nkl via delta} $M'$ times, we obtain a random unit vector
$\bar v_{k,M'}$ satisfying \eqref{nkl prop 1} and \eqref{nkl prop 2}.
Moreover, the same lemma provides the estimate
$$\P(\Event_{k,\M'})\ge\P(\Event_{k-1})-3M'\exp(-\sqrt{N}).$$
Then, in view of Lemma~\ref{BStat subsequent substeps lemma} and the definition of $M'$, we have
$$\P\bigl\{\BadBlocks(k,M'+1)\neq\emptyset\bigr\}\le N\exp(-{C_h}^2/\alpha_{k,M'})+1-\P(\Event_{k,M'})\le
\frac{1}{n^2}+1-\P(\Event_{k-1}).$$
Combining the above estimate with the definition of $\Event_k$, we get for $\bar v_k:=\bar v_{k,M'}$ that
$$\P(\Event_k)\ge \P(\Event_{k-1})-\frac{1}{n^2}.$$
\end{proof}

{\bf Acknowledgements.} The first named author would like to thank Ronen Eldan
for introducing him to the question. Both authors are grateful to Nicole Tomczak-Jaegermann, Ronen Eldan
and Olivier Gu\' edon for discussions and valuable suggestions. 
Finally, the authors would like to thank the referee for valuable remarks and suggestions which helped improve the manuscript.


\end{document}